\documentclass[11pt]{article}

\usepackage[a4paper,margin=0.92in]{geometry}
\usepackage[T1]{fontenc}
\usepackage{amsmath,amssymb,amsthm,mathtools,bm}
\usepackage{lmodern}
\usepackage{mathrsfs}
\usepackage{microtype}
\usepackage{booktabs,threeparttable,array,multirow,makecell,tabularx}
\usepackage{graphicx}
\usepackage{subcaption}
\usepackage[ruled,vlined,linesnumbered]{algorithm2e}
\usepackage{enumitem}
\usepackage{siunitx}
\usepackage[numbers,sort&compress]{natbib}
\usepackage{hyperref}
\usepackage{aliascnt}
\usepackage[nameinlink,capitalize,noabbrev]{cleveref}
\usepackage{xcolor}
\usepackage{xspace}
\usepackage{url}
\usepackage{placeins}
\usepackage{float}

\graphicspath{{figures/}}

\hypersetup{
  colorlinks=false,
  pdfborder={0 0 1},
  linkbordercolor={0 0 1},
  citebordercolor={0 0.5 0},
  urlbordercolor={0 0 1},
  pdftitle={Self-Consistent Adjoint Policy Iteration for Constrained Dynamic Portfolio Choice},
  pdfauthor={Jeonggyu Huh, Yeoneung Kim, Seungwon Jeong}
}

\newtheorem{theorem}{Theorem}[section]

\newaliascnt{proposition}{theorem}
\newtheorem{proposition}[proposition]{Proposition}
\aliascntresetthe{proposition}

\newaliascnt{lemma}{theorem}
\newtheorem{lemma}[lemma]{Lemma}
\aliascntresetthe{lemma}

\newaliascnt{corollary}{theorem}
\newtheorem{corollary}[corollary]{Corollary}
\aliascntresetthe{corollary}

\newaliascnt{assumption}{theorem}
\newtheorem{assumption}[assumption]{Assumption}
\aliascntresetthe{assumption}

\theoremstyle{remark}
\newtheorem{remark}{Remark}[section]

\crefname{assumption}{Assumption}{Assumptions}
\Crefname{assumption}{Assumption}{Assumptions}
\crefname{theorem}{Theorem}{Theorems}
\Crefname{theorem}{Theorem}{Theorems}
\crefname{proposition}{Proposition}{Propositions}
\Crefname{proposition}{Proposition}{Propositions}
\crefname{lemma}{Lemma}{Lemmas}
\Crefname{lemma}{Lemma}{Lemmas}
\crefname{corollary}{Corollary}{Corollaries}
\Crefname{corollary}{Corollary}{Corollaries}
\crefname{algorithm}{Algorithm}{Algorithms}
\Crefname{algorithm}{Algorithm}{Algorithms}

\newcommand{\R}{\mathbb{R}}
\newcommand{\E}{\mathbb{E}}
\newcommand{\Pp}{\mathbb{P}}
\newcommand{\F}{\mathcal{F}}
\newcommand{\U}{\mathcal{U}}
\newcommand{\Qcal}{\mathcal{Q}}
\newcommand{\Lcal}{\mathcal{L}}
\newcommand{\Tcal}{\mathcal{T}}
\newcommand{\Acal}{\mathcal{A}}
\newcommand{\AcalNum}{\widetilde{\mathcal{A}}}
\newcommand{\ResOp}{\mathcal{R}}
\newcommand{\Hgen}{\mathfrak{H}}
\newcommand{\EvalOp}{\operatorname{Eval}}
\newcommand{\FitOp}{\operatorname{Fit}}
\newcommand{\RecOp}{\operatorname{Rec}}
\newcommand{\InterpOp}{\operatorname{Interp}}
\newcommand{\Kset}{\mathcal{K}}
\newcommand{\PortSet}{\mathcal{C}_{\mathrm{port}}}
\newcommand{\Hpol}{\mathcal{H}_{\mathrm{pol}}}
\newcommand{\D}{\mathrm{D}}
\newcommand{\dd}{\,\mathrm{d}}
\newcommand{\tr}{\operatorname{tr}}
\newcommand{\argmax}{\operatorname*{arg\,max}}
\newcommand{\argmin}{\operatorname*{arg\,min}}

\newcommand{\ipgdpo}{\textsc{I-PGDPO}\xspace}
\newcommand{\norm}[1]{\left\lVert #1\right\rVert}

\newcommand{\inner}[2]{\left\langle #1,#2\right\rangle}
\newcommand{\epscons}{\varepsilon_{\mathrm{cons}}}
\newcommand{\epsdisc}{\varepsilon_{\mathrm{disc}}}
\newcommand{\epsstat}{\varepsilon_{\mathrm{stat}}}
\newcommand{\epsrepr}{\varepsilon_{\mathrm{repr}}}
\newcommand{\epsrec}{\varepsilon_{\mathrm{rec}}}
\newcommand{\epsext}{\varepsilon_{\mathrm{ext}}}
\newcommand{\epsham}{\varepsilon_{\mathrm{ham}}}
\newcommand{\Adv}{\mathfrak{A}}

\title{\textbf{Self-Consistent Adjoint Policy Iteration for}\\
\textbf{Constrained Dynamic Portfolio Choice}}

\author{
Jeonggyu Huh\textsuperscript{1}\thanks{Corresponding author. Address for proofs: Department of Mathematics, Sungkyunkwan University, Suwon 16419, Republic of Korea. E-mail: \href{mailto:jghuh@skku.edu}{\texttt{jghuh@skku.edu}}}\quad
Yeoneung Kim\textsuperscript{2}\quad
Seungwon Jeong\textsuperscript{3}\\[0.50em]
\small \textsuperscript{1}Department of Mathematics, Sungkyunkwan University, Suwon 16419, Republic of Korea\\
\small \textsuperscript{2}Department of Applied Artificial Intelligence,\\
\small Seoul National University of Science and Technology, Seoul 01811, Republic of Korea\\
\small \textsuperscript{3}Global-Learning \& Academic Research Institution for Master's and PhD Students, and Postdocs,\\
\small Chonnam National University, Gwangju 61186, Republic of Korea
}
\date{}

\begin{document}
\maketitle

\begin{abstract}
We develop simulation-based policy iteration for continuous-time portfolio choice with predictable returns and convex constraints. Each outer step re-evaluates a fixed-latent open-loop backpropagation-through-time (OL-BPTT) adjoint after deployment and solves the constrained update. Shifted-adjoint cancellation controls the adjoint--HJB Hamiltonian-gradient discrepancy by the policy-improvement residual. For CRRA portfolios, exact HJB policy iteration identifies the optimal reduced value factor, while population OL-BPTT iteration converges globally when the adjoint update is directionally improving and approximate stationarity is asymptotically HJB-compatible. A theorem-matched occupation audit yields maximal $95\%$ upper endpoints of $0.066$ for the primitive directional ratio and $0.074$ for a stronger norm-relative ratio, both against the half-step threshold $0.75$. In the high-precision $50$--$50$ occupancy/broad-anchor design of a three-factor, fifty-asset benchmark, current-policy re-evaluation outperforms matched pooled refinement under the on-policy and broad evaluation laws.
\end{abstract}

\noindent\textbf{Keywords:} dynamic portfolio choice; stochastic control; policy iteration; adjoint methods; portfolio constraints.

\noindent\textbf{JEL classification:} C61; C63; G11.

\noindent\textbf{Mathematics Subject Classification:} 49L20; 49M29; 91G10; 93E20.

\noindent\textbf{Running title:} Adjoint policy iteration for portfolio choice.

\section{Introduction}
\label{sec:intro}

Continuous-time portfolio choice with stochastic investment opportunities is analytically tractable only in a limited set of unconstrained models. Classical results characterize optimal investment through dynamic programming, martingale duality, or explicit affine solutions \citep{Merton1969,CvitanicKaratzas1992,XuShreve1992b,Zariphopoulou1994,KimOmberg1996,Liu2007}. Once short-sale, borrowing, or asset-wise position limits are imposed, the portfolio rule develops active regions and switching boundaries. With several return-predicting factors, a direct Hamilton--Jacobi--Bellman (HJB) discretization also becomes expensive even though the portfolio action itself remains a convex quadratic problem.

Simulation-based dynamic portfolio methods avoid a full factor grid \citep{BrandtEtAl2005}, while recent financial algorithms combine stochastic approximation or reinforcement learning with convergence and error analysis \citep{CostaGadatHuang2025,WangGaoLi2026}. Neural HJB and backward stochastic differential equation (BSDE) methods, together with continuous-time learning methods, approximate global value, $q$-function, actor--critic, or policy-gradient objects \citep{HanJentzenE2018,HurePhamBachouchLangrene2021,WangZariphopoulouZhou2020,JiaZhouPolicyEvaluation2022,ReisingerStockingerZhang2023}. Their bottleneck is often a repeatedly trained global approximator. We instead retain only the low-dimensional continuation statistic needed for the dynamic-programming improvement step and solve the high-dimensional constrained portfolio block explicitly.

Neural policy-iteration implementations can replace one nonlinear HJB equation by a sequence of fixed-policy equations, but still solve a global neural approximation problem at successive evaluations \citep{ItoReisingerZhang2021,MengEtAl2024PINNPI}. Two iterative traditions provide the closest context. Stochastic methods of successive approximations (MSA) alternate the forward state, adjoints from the Pontryagin maximum principle (PMP), and Hamiltonian control updates, including settings with control in the diffusion coefficient and second-order adjoints \citep{KerimkulovSiskaSzpruch2021,JiXu2022MSA}. HJB policy iteration instead alternates fixed-policy value equations and greedy feedback improvement \citep{ItoReisingerZhang2021}. Our construction retains current-policy simulation and adjoint re-evaluation from MSA, but turns the resulting conditional adjoint information into a Markov feedback-improvement map on state--time restart points and compares that map directly with exact fixed-policy HJB improvement. Thus, after a flexible warm start, the outer loop uses fresh simulation and conditional field estimation without requiring repeated training of a global policy or value network.

For constant relative risk aversion (CRRA) utility with risk aversion $\gamma>1$, a fixed portfolio feedback $u$ has value $V^u$ at time $t$, wealth $x$, and factor state $z$, with reduced value factor $F^u$:
\[
  V^u(t,x,z)=\frac{x^{1-\gamma}}{1-\gamma}F^u(t,z).
\]
The local portfolio improvement depends on the normalized hedging field $R^u=\D_z\log F^u$. With excess returns $\mu(z)$, return covariance $\Sigma$, return--factor cross-covariance $C$, and feasible portfolio set $\PortSet$, it is the $\Sigma$-metric projection
\[
  \Pi_{\PortSet}^{\Sigma}\!\left(
  \frac1\gamma\Sigma^{-1}\bigl[\mu(z)+C R^u(t,z)\bigr]
  \right).
\]
Thus a many-asset problem reduces to estimating a factor-dependent field and then solving an explicit metric projection or quadratic program (QP). Our population adjoint-based operator replaces $R^u$ by the fixed-latent open-loop backpropagation-through-time (OL-BPTT) target $R_{\mathrm{OL}}^u$. Starting from a feasible feedback $u^k$, each outer step simulates under the deployed policy, reconstructs $R_{\mathrm{OL}}^{u^k}$ over state--time restart points, computes the constrained update, and deploys a damped policy.

Re-evaluation is economically relevant because the implemented portfolio changes both the state law and the intertemporal hedging field. A second estimate under the initial policy is therefore not generally an estimate of the same policy-improvement operator as a current-policy update. The distinction disappears in the Merton benchmark, where the operator is policy independent, but it is material under predictable returns and binding constraints. This motivates both the outer iteration and the matched-budget comparison between current-policy re-evaluation and pooled refinement under the initial policy.

\paragraph{Relation to earlier and parallel work.}
The terminology and one-shot motivation begin with Pontryagin-guided direct policy optimization \citep{HuhEtAl2025}; no result from that preprint is used in the analysis below. That work uses adjoint information to recover a corrected control from a trained policy, but does not analyze repeated deployment and current-policy re-evaluation. Jeon et al.~\citep{JeonHuhKooLim2026} is an independent manuscript. It develops a broader one-shot adjoint-to-control framework, including general moving feasible fibers, first- and second-order stochastic maximum-principle adjoints, and shifted-martingale acquisition. The present paper addresses the subsequent deployed recursion rather than the one-step acquisition problem treated in those works. No theorem or proof in the present paper invokes a result from that manuscript. Section~\ref{sec:framework} defines the population map $\Acal$ and its finite approximation $\AcalNum_k$ directly, while Section~\ref{sec:portfolio-decoder} defines $R_{\mathrm{OL}}^u$ and the explicit CRRA portfolio update. Writing $\vartheta^u$ for the required population adjoint field, the new object is the deployed recursion
\[
  u^0\longrightarrow \vartheta^{u^0}\longrightarrow u^1
  \longrightarrow \vartheta^{u^1}\longrightarrow u^2\longrightarrow\cdots.
\]

The three-operator hierarchy
\[
  \AcalNum_k\longrightarrow\Acal\longrightarrow\Tcal
\]
separates finite computation, population adjoint-based policy improvement, and exact HJB policy improvement. At the generic level, the main obstruction is the second-order term created by control-dependent diffusion. For state $s$, second adjoint $P^u$, and diffusion coefficient $\sigma$, shifted-adjoint cancellation confines $P^u-\D^2_{ss}V^u$ to
\[
  \bigl(P^u-\D^2_{ss}V^u\bigr)
  \bigl(\sigma(a)-\sigma(u)\bigr),
\]
so the second-adjoint discrepancy vanishes at the current action and is scaled by the updated-control diffusion displacement. In the CRRA reduction, the entire population Hamiltonian-gradient discrepancy collapses to
\[
  C\bigl(R_{\mathrm{OL}}^u-R^u\bigr).
\]

The main contributions are:
\begin{enumerate}[leftmargin=2.1em,itemsep=0.25em]
  \item We close the one-shot fixed-latent adjoint-to-control map into a self-consistent constrained policy iteration with damping and an interpolate-then-project output policy.
  \item On fixed-stratum regular future tubes, shifted-adjoint cancellation and fixed-latent envelope analysis control the adjoint--HJB Hamiltonian-gradient discrepancy by the HJB policy-improvement residual, without requiring $P^u\to\D^2V^u$.
  \item For quadratic HJB action Hamiltonians, we prove exact and approximate value-improvement bounds. For constrained CRRA portfolios, exact HJB policy iteration and population OL-BPTT iteration converge globally, the latter under directional occupation alignment and asymptotic stationarity compatibility.
  \item The numerical study identifies when re-evaluation is useful and audits both the primitive directional condition and a stronger norm-relative condition. Across $180$ theorem-matched path banks, the largest $95\%$ upper endpoints are $0.066341$ for $\widehat\kappa_{\mathrm{dir}}$ and $0.073936$ for $\widehat\kappa_{\mathrm{occ}}$, compared with the sufficient half-step threshold $0.75$. In the high-precision $50$--$50$ occupancy/broad-anchor design of the three-factor, fifty-asset benchmark, current-policy re-evaluation beats matched pooled initial-policy refinement under the on-policy and broad evaluation laws in all three seeds.
\end{enumerate}

The generic consistency theorem is local to regular interior or fixed-face tubes and does not itself verify the global directional-alignment or stationarity-compatibility hypotheses. The primitive directional condition is audited directly; a stronger norm-relative condition, also audited on the visited benchmark sequence, supplies both hypotheses. Global convergence is proved for the constrained CRRA population subclass. Finite sampled implementations remain governed by the decomposed a posteriori error certificate.

\section{Self-consistent adjoint policy iteration}
\label{sec:framework}

\subsection{Controlled diffusion and frozen warm start}

Let $(\Omega,\F,(\F_t)_{0\le t\le T},\Pp)$ support a $d_W$-dimensional Brownian motion $W=(W_t)_{0\le t\le T}$. The controlled state $S_t\in\R^{d_S}$ satisfies
\begin{equation}
  \dd S_t=b(t,S_t,u_t)\dd t+\sigma(t,S_t,u_t)\dd W_t,
  \qquad S_0=s_0,
  \label{eq:sde}
\end{equation}
with progressively measurable control
\begin{equation}
  u_t\in\U(t,S_t)\subseteq\R^{d_u}.
  \label{eq:constraint}
\end{equation}
The objective is
\begin{equation}
  J(u)=\E\left[\int_0^T \ell(t,S_t,u_t)\dd t+g(S_T)\right].
  \label{eq:objective}
\end{equation}
The control problem is to maximize $J(u)$ over admissible controls. We write $S^u$ for the state under feedback $u$, and $\E_{t,s}^u$ for expectation when the controlled process is restarted from $S_t^u=s$ and then follows $u$.
A neural or otherwise flexible warm start can enforce feasibility through a local or global chart
\begin{equation}
  u_\phi(t,s)=\Psi(t,s,a_\phi(t,s)).
  \label{eq:chart}
\end{equation}
After training, $\phi$ is frozen and the resulting feasible policy is denoted by $u^0$. Neural-network evaluation may therefore remain inside a rollout, but no later theorem depends on convergence of the warm-start parameter optimizer.

\subsection{First and second adjoints for policy improvement}
\label{sec:adjoint-recovery}

Fix a rollout policy $u$ and a regular feasibility chart $u_t=\Psi(t,S_t,a_t)$. \emph{Fixed-latent} (frozen-selector) differentiation means that the realized selector $a_t$ is held fixed while the chart-composed coefficients are differentiated; structural state derivatives of $\Psi$ are therefore retained. In the state-independent action charts used by the iterative benchmarks below, those chart-composed derivatives reduce to the primitive state derivatives. The adjoint--HJB consistency theorem in \cref{sec:compatibility-theory} is stated in these direct action coordinates; a moving-fiber extension would replace primitive derivatives by fixed-latent derivatives of the chart-composed coefficients and is not claimed here.

Subscripts $s$ and $a$ denote state and action derivatives, and $\sigma^{(j)}$ is the $j$th diffusion column. Let $(\lambda^u,\mathsf Z^u)$ be the vector first adjoint and its Brownian martingale coefficient. In direct action coordinates, with every state derivative taken at fixed action $u_t$, set
\begin{equation}
  A_t^u:=b_s(t,S_t^u,u_t),\qquad
  C_{j,t}^u:=\sigma_s^{(j)}(t,S_t^u,u_t),
  \label{eq:adjoint-coefficient-definitions}
\end{equation}
and write $\mathsf Z_t^{u,(j)}$ for the $j$th column of $\mathsf Z_t^u$. The first-adjoint BSDE is
\begin{equation}
\begin{split}
  -\dd\lambda_t^u={}&\left[
  \ell_s(t,S_t^u,u_t)+(A_t^u)^\top\lambda_t^u
  +\sum_{j=1}^{d_W}(C_{j,t}^u)^\top\mathsf Z_t^{u,(j)}
  \right]\dd t-\mathsf Z_t^u\dd W_t,\\
  \lambda_T^u={}&\D g(S_T^u).
\end{split}
\label{eq:first-adjoint-bsde}
\end{equation}
Let $P^u\in\mathbb S^{d_S}$ be the matrix second adjoint and $\mathsf R^{u,(j)}\in\mathbb S^{d_S}$ its martingale coefficients, where $\mathbb S^{d_S}$ is the space of symmetric matrices. Using the Frobenius inner product $\inner{A}{B}_F:=\tr(A^\top B)$, set
\begin{equation}
  \mathcal F_{P,t}^u
  :=\D_{ss}^2\!\left[
  \ell+\lambda_t^{u\top}b+\inner{\mathsf Z_t^u}{\sigma}_F
  \right](t,S_t^u,u_t),
  \label{eq:second-adjoint-driver-definition}
\end{equation}
where the adjoint inputs are frozen and the state derivative again holds the action fixed, the second-adjoint BSDE is
\begin{equation}
\begin{split}
  -\dd P_t^u={}&\Bigg[\mathcal F_{P,t}^u+(A_t^u)^\top P_t^u+P_t^uA_t^u
  +\sum_{j=1}^{d_W}(C_{j,t}^u)^\top P_t^uC_{j,t}^u\\
  &\qquad+\sum_{j=1}^{d_W}\Bigl[
  (C_{j,t}^u)^\top\mathsf R_t^{u,(j)}
  +\mathsf R_t^{u,(j)}C_{j,t}^u\Bigr]\Bigg]\dd t
  -\sum_{j=1}^{d_W}\mathsf R_t^{u,(j)}\dd W_t^{(j)},\\
  P_T^u={}&\D^2g(S_T^u).
\end{split}
\label{eq:second-adjoint-bsde}
\end{equation}
These are the fixed-latent stochastic maximum-principle adjoints \citep{Peng1990,YongZhou1999}. Their dimensions are $\lambda^u\in\R^{d_S}$, $\mathsf Z^u\in\R^{d_S\times d_W}$, and $P^u\in\mathbb S^{d_S}$. The matrix $P^u$ is not identified with a value Hessian. When the control enters the diffusion, finite action comparison uses the shifted martingale input
\begin{equation}
  \zeta_t^u:=\mathsf Z_t^u-P_t^u\sigma(t,S_t^u,u_t)
  \label{eq:shifted-adjoint}
\end{equation}
and the generalized Hamiltonian
\begin{equation}
  \Hgen(t,s;a,\lambda,\zeta,P)
  =\ell(t,s,a)+\lambda^\top b(t,s,a)
  +\inner{\zeta}{\sigma(t,s,a)}_F
  +\frac12\tr\!\left[\sigma(t,s,a)^\top P\sigma(t,s,a)\right].
  \label{eq:generalized-hamiltonian}
\end{equation}
Write $\vartheta^u=(\lambda^u,\zeta^u,P^u)$, retaining only the blocks required by the pointwise control problem. A \emph{state--time restart point} (called an anchor in the implementation) is a prescribed pair $(t,s)$ at which the state is restarted from $s$ and continuation paths are simulated under the current feedback. The resulting restarted-state quantity is the conditional adjoint variable estimated from those continuations; when a Markov version exists, it coincides with $\vartheta^u(t,s)$. The population-level adjoint-based policy-improvement operator is
\begin{equation}
  \Acal(u)(t,s)\in
  \argmax_{a\in\U(t,s)}\Hgen(t,s;a,\vartheta^u(t,s)).
  \label{eq:adjoint-recovery-map}
\end{equation}

\Cref{eq:first-adjoint-bsde,eq:second-adjoint-bsde,eq:shifted-adjoint,eq:generalized-hamiltonian,eq:adjoint-recovery-map} fully define the population one-step operator used throughout this paper. The present analysis concerns what happens when the recovered feedback is deployed and that population operation is re-evaluated; the broader one-shot acquisition framework is studied independently in Jeon et al.~\citep{JeonHuhKooLim2026}.
\subsection{HJB policy improvement, smooth PMP--HJB identification, and consistency}
\label{sec:compatibility}

For a fixed Markov feedback $u$, let $V^u(t,s)$ be its continuation value. When $V^u$ is smooth, the fixed-policy HJB action Hamiltonian is
\begin{equation}
  \Qcal^u(t,s,a)
  =\ell(t,s,a)+\D_sV^u(t,s)^\top b(t,s,a)
  +\frac12\tr\!\left[\sigma\sigma^\top(t,s,a)\D^2_{ss}V^u(t,s)\right].
  \label{eq:markov-score}
\end{equation}
The exact HJB policy-improvement operator is
\begin{equation}
  \Tcal(u)(t,s)\in\argmax_{a\in\U(t,s)}\Qcal^u(t,s,a).
  \label{eq:exact-map}
\end{equation}
Write $\ResOp(u):=\Tcal(u)-u$ for its policy-improvement residual. The exact value-improvement and global-convergence results below concern this operator. Throughout, $\Acal(u)$ and $\Tcal(u)$ denote fixed measurable selectors of the displayed argmax correspondences; under the strongly concave quadratic Hamiltonians used below, the selector is unique. The adjoint-based operator $\Acal(u)$ is a computational realization whose consistency with $\Tcal(u)$ must be established rather than assumed.

The population operator $\Acal(u)$ denotes the infinite-sample fixed-latent adjoint-based policy-improvement operator. In a generic diffusion its population adjoint target is the fixed-latent first/second-adjoint tuple defined in \cref{eq:shifted-adjoint,eq:generalized-hamiltonian}; in the CRRA realization it is the reduced target defined in \cref{sec:portfolio-decoder}. Its finite-computation approximation $\AcalNum_k(u)$ is obtained by reverse-mode automatic differentiation through a time-discretized rollout---that is, fixed-latent open-loop backpropagation through time (OL-BPTT)---followed by conditional projection or regression over state--time restart points and an exact convex QP update. Under consistent time discretization, sampling, representation, continuation extension, and QP-solver tolerance, this discrete estimator targets $\Acal(u)$; those numerical errors belong to the link $\AcalNum_k\to\Acal$. In a structure-reduced model, the same fixed-latent reverse pass is applied to the reduced fixed-policy representation and estimates only the field required by the reduced policy-improvement formula. By contrast, $\Tcal(u)$ is the dynamic-programming/HJB improvement defined from the exact fixed-policy continuation value. Thus $\AcalNum_k\to\Acal\to\Tcal$ separates finite discrete computation, population-level OL-BPTT/PMP adjoint evaluation, and the exact HJB reference. Accordingly, the population-level consistency, value-improvement, and convergence statements below depend on the adjoint-defined operator $\Acal$, not on any particular numerical realization $\AcalNum_k$.

Let $u^\star$ be a sufficiently smooth optimal Markov selector on a regular stratum, let $S^\star$ be its controlled state, and let $V$ be the corresponding smooth optimal value. Write $\sigma_t^\star:=\sigma(t,S_t^\star,u_t^\star)$. The standard maximum-principle/dynamic-programming identification \citep{Peng1990,YongZhou1999} is
\begin{equation}
  \lambda_t^\star=\D_sV(t,S_t^\star),\qquad
  \mathsf Z_t^\star=\D^2_{ss}V(t,S_t^\star)\sigma_t^\star,\qquad
  \zeta_t^\star=\bigl(\D^2_{ss}V(t,S_t^\star)-P_t^\star\bigr)\sigma_t^\star.
  \label{eq:smooth-adjoint-identification}
\end{equation}
Equation~\eqref{eq:smooth-adjoint-identification} makes the adjoint and HJB action gradients agree at the current optimal action, without requiring $P^\star=\D^2V$ or $\zeta^\star=0$; see Proposition~\ref{prop:shifted-gradient-cancellation}. Away from stationarity, the fixed-policy action gradients can differ.

Whenever the two Hamiltonians admit a common positive normalization, write $q_{\mathrm A}^u$ for the normalized adjoint Hamiltonian and $q_{\mathrm{HJB}}^u$ for the normalized HJB action Hamiltonian. Define the pointwise adjoint--HJB Hamiltonian-gradient discrepancy by
\begin{equation}
  e_{\mathrm{cons}}^u(t,s,a)
  :=\nabla_a q_{\mathrm A}^u(t,s,a)-\nabla_a q_{\mathrm{HJB}}^u(t,s,a).
  \label{eq:compatibility-error}
\end{equation}
For a positive-definite curvature matrix $H$, use $\norm{x}_H^2:=x^\top Hx$ and the dual norm $\norm{y}_{H^{-1}}^2:=y^\top H^{-1}y$. This discrepancy includes errors in both the linear and quadratic action coefficients and does not require the adjoint Hamiltonian to be represented through a value function.

\begin{remark}[Envelope cancellation]
In the direct action coordinates used by \cref{sec:compatibility-theory}, an interior stationary policy has vanishing action gradient:
\[
  \nabla_a q_{\mathrm{HJB}}^u(t,s,u(t,s))=0.
\]
Hence differentiating through the feedback contributes no first-order chain term. On a fixed regular active face, Karush--Kuhn--Tucker (KKT) stationarity places the HJB Hamiltonian gradient in the normal cone while $\D_su$ is tangent, giving the same cancellation. For an interior quadratic HJB Hamiltonian with action-independent term $c_0^u$, linear coefficient $m^u$, and curvature $H^u\succ0$,
\[
  q_{\mathrm{HJB}}^u(a)=c_0^u+(m^u)^\top a-\frac12a^\top H^ua,
\]
one has
\begin{equation}
  (\D_su)^\top\nabla_a q_{\mathrm{HJB}}^u(u)
  =(\D_su)^\top H^u\bigl(\Tcal(u)-u\bigr).
  \label{eq:envelope-policy-residual}
\end{equation}
Thus open-loop and closed-loop first variations agree at a fixed point of the HJB policy-improvement operator; during intermediate iterations their discrepancy is naturally tied to the policy-improvement residual and is retained in the consistency error $e_{\mathrm{cons}}^u$. Section~\ref{sec:compatibility-theory} turns this identity into a residual bound for the first-order fixed-latent OL-BPTT target on smooth interior or identified fixed-face regions. It still does not by itself bound the complete adjoint--HJB Hamiltonian-gradient discrepancy away from stationarity.
\end{remark}

\subsection{Numerical realization of the adjoint-based policy-improvement operator}

The scheme, denoted \ipgdpo, iterates policies and policy-improvement fields rather than neural parameters. The evaluator $\EvalOp_{h,M}^{\mathrm{OL}}(u)$ applies fixed-latent OL-BPTT through each discretized rollout and conditionally estimates the adjoint variable required by the pointwise policy-improvement problem over state--time restart points. In a generic diffusion this statistic contains the required first- and second-adjoint blocks; in a structure-reduced model the same fixed-latent reverse pass estimates only the reduced field sufficient for policy improvement. Here $h$ is the time-discretization step and $M$ is the path-equivalent simulation budget, and $\FitOp_{\mathfrak F}$ reconstructs the conditional field by a table, spline, structured basis, low-rank expansion, or fixed nonlinear feature map. The operator $\InterpOp$ denotes the continuous interpolation or regression applied to an averaged pre-projection control field before its final feasibility projection. In the reported experiments, all post-warm-start fits are globally solved linear or ridge problems; any error from a more general approximator enters the same a posteriori certificate. We write $\EvalOp_{h,M}:=\EvalOp_{h,M}^{\mathrm{OL}}$, and the represented adjoint field is
\begin{equation}
  \widehat\vartheta^k
  =\FitOp_{\mathfrak F_k}\bigl[\EvalOp_{h_k,M_k}(u^k)\bigr].
  \label{eq:represented-field}
\end{equation}
In the portfolio models studied here, $\RecOp$ returns the unconstrained maximizer of the estimated strongly concave quadratic action Hamiltonian, yielding the pre-projection control field:
\begin{equation}
  a_{\mathrm{raw}}^k(t,s)
  =\RecOp\bigl(t,s,\widehat\vartheta^k(t,s)\bigr),
  \label{eq:raw-recovery}
\end{equation}
Here $\Pi_{\U}$ denotes pointwise projection onto $\U(t,s)$ in the quadratic action-curvature metric, suppressed in the notation. This projection solves the constrained QP; in the CRRA case the metric is $\Sigma$, as in \cref{eq:portfolio-olbptt-recovery}, and it reduces to componentwise clipping in the reported diagonal-covariance box benchmarks. The computed policy update is
\begin{equation}
  \AcalNum_k(u^k):=\Pi_{\U}(a_{\mathrm{raw}}^k),
  \qquad
  \widetilde u^k:=\AcalNum_k(u^k).
  \label{eq:computed-map}
\end{equation}
The rollout policy is updated by
\begin{equation}
  u^{k+1}=(1-\beta_k)u^k+\beta_k\widetilde u^k,
  \qquad 0<\beta_k\le1.
  \label{eq:damped-update}
\end{equation}
The gain stabilizes the rollout policy that generates the next evaluation law, but it does not remove systematic adjoint, discretization, or representation bias.

\begin{algorithm}[!htbp]
\caption{\ipgdpo: self-consistent fixed-latent OL-BPTT policy iteration}
\label{alg:ipgdpo}
\DontPrintSemicolon
\KwIn{feasible warm start $u^0$; outer iterations $N_{\mathrm{out}}$; fixed-latent OL-BPTT evaluators $\EvalOp_{h_k,M_k}$; representation classes $\mathfrak F_k$; unconstrained recovery map $\RecOp$; gains $\beta_k$.}
\For{$k=0,1,\ldots,N_{\mathrm{out}}-1$}{
  Simulate the current rollout policy $u^k$\;
  Apply fixed-latent OL-BPTT to the generic adjoint tuple or the structure-reduced target, and conditionally estimate the required adjoint variable at the restart points\;
  Fit the adjoint field required by the policy update $\widehat\vartheta^k=\FitOp_{\mathfrak F_k}[\EvalOp_{h_k,M_k}(u^k)]$\;
  Form the reduced quadratic Hamiltonian and compute its unconstrained maximizer $a_{\rm raw}^k=\RecOp(\widehat\vartheta^k)$\;
  Solve the constrained QP by metric projection $\widetilde u^k=\AcalNum_k(u^k)=\Pi_{\U}(a_{\rm raw}^k)$, then deploy $u^{k+1}=(1-\beta_k)u^k+\beta_k\widetilde u^k$\;
  Monitor the Hamiltonian-gradient, KKT, and plateau diagnostics used to select the output policy\;
}
Construct a continuous average $\bar a_{\rm raw}$ of post-burn-in pre-projection control fields\;
\KwOut{$u_{\rm return}=\Pi_{\U}(\InterpOp[\bar a_{\rm raw}])$.}
\end{algorithm}

The CRRA specialization in \cref{alg:ipgdpo} represents only $R_{\mathrm{OL}}$ and solves the resulting strongly concave portfolio QP up to numerical tolerance.

\paragraph{Rollout and output policies.}
The rollout $u^k$ is damped because it determines the next state distribution and adjoint estimate. The output policy is formed from the averaged pre-projection control field. With burn-in index $k_0$ and weights $\omega_k\ge0$ satisfying $\sum_{k=k_0}^{N_{\mathrm{out}}-1}\omega_k=1$,
\begin{equation}
  \bar a_{\mathrm{raw}}(t,s)=\sum_{k=k_0}^{N_{\mathrm{out}}-1}\omega_k a_{\mathrm{raw}}^k(t,s),
  \qquad
  u_{\mathrm{return}}(t,s)=\Pi_{\U(t,s)}\!\left(\InterpOp[\bar a_{\mathrm{raw}}](t,s)\right).
  \label{eq:structure-return}
\end{equation}
Interpolation or regression is applied before the final projection. Projecting at nodes and then interpolating can move a switching boundary; averaging projected rollout policies can blur it because damping leaves finite-iteration values strictly inside active bounds.

\paragraph{Coupled time-grid correction.}
When first-order time-discretization bias dominates conditional-estimation noise, the paired-grid estimator
\begin{equation}
  \widehat\vartheta_{\mathrm{RE}}^u
  =2\widehat\vartheta_{h/2}^u-\InterpOp[\widehat\vartheta_h^u]
  \label{eq:richardson}
\end{equation}
removes the leading $O(h)$ term under the usual coupled expansion \citep{Richardson1911,TalayTubaro1990}. The experiments call this coupled Richardson correction; $\InterpOp$ transfers the coarse-grid field to the fine state--time design.

The matched-budget comparison tests whether current-policy re-evaluation and pooled initial-policy refinement target different policy-improvement operators; it is not an assumption of the convergence results.

\section{Adjoint--HJB consistency and value improvement}
\label{sec:theory}

The analysis has two steps. First, we compare the population fixed-latent adjoint Hamiltonian with the fixed-policy HJB Hamiltonian and show that their gradient discrepancy at the updated action is scaled by the policy-improvement residual. Second, a performance-difference identity transfers this discrepancy, together with finite numerical errors, to a true-value improvement bound. Technical PDE and BSDE estimates are collected in Appendix~\ref{app:supp-generic}.
\subsection{HJB reference and residual bounds for fixed-latent OL-BPTT}
\label{sec:compatibility-theory}

For a fixed feedback $u$, write
\begin{equation}
  b^u(t,s)=b(t,s,u(t,s)),
  \qquad
  \sigma^u(t,s)=\sigma(t,s,u(t,s)),
  \qquad
  \ell^u(t,s)=\ell(t,s,u(t,s)).
\end{equation}
Throughout this subsection we work in direct action coordinates, equivalently under a state-independent feasibility chart. This is the setting of the iterative benchmarks and makes the deployed action itself the fixed-latent selector. For a moving state-dependent chart, fixed-latent differentiation retains the structural chart derivative as described in \cref{sec:adjoint-recovery}; the chart-composed residual identity is not asserted here. In direct coordinates, fixed-latent OL-BPTT holds the deployed feedback action fixed when differentiating a rollout. To quantify its target error, we introduce the fully closed-loop state tangent solely as the analytical device that identifies the exact fixed-policy value derivative. It is not used as the numerical acquisition rule in the reported algorithms. Write $u_r:=u(r,S_r^u)$ and $I=I_{d_S}$. The closed-loop reference tangent $J_r^u:=\partial S_r^u/\partial s$ solves
\begin{equation}
\begin{split}
  \dd J_r^u={}&\bigl[b_s+b_a\D_su\bigr](r,S_r^u,u_r)J_r^u\dd r\\
  &+\sum_{j=1}^{d_W}\bigl[\sigma_s^{(j)}+\sigma_a^{(j)}\D_su\bigr](r,S_r^u,u_r)J_r^u\dd W_r^{(j)},
  \qquad J_t^u=I.
  \label{eq:closed-loop-tangent}
\end{split}
\end{equation}

\begin{lemma}[Fixed-policy value-gradient identification and exact fixed-latent defect]
\label{lem:closed-loop-compatibility}
Under the direct-action convention above, let $\mathcal O\subseteq\R^{d_S}$ be an invariant open state domain. Fix a restart point $(t,s)$ and an open future region $\mathcal D\subset[0,T]\times\mathcal O$ containing the $u$-trajectories generated by all sufficiently small perturbations of $s$. Assume that the admissible feedback $u$ is $C^1$ on a neighborhood of $\mathcal D$ with bounded state derivative there. Suppose $b$, $\sigma$, and $\ell$ are continuously differentiable in $(s,a)$ with bounded first derivatives on the corresponding state--action neighborhood and $g\in C^1(\mathcal O)$. Assume that $g(S_T^u)$ and $\int_t^T|\ell^u(r,S_r^u)|\dd r$ are integrable, that the terminal- and running-payoff difference quotients generated by sufficiently small perturbations of the initial state are uniformly integrable (the running family under $\dd r\otimes\dd\Pp$), and that the state and tangent processes have the conjugate moments needed below. Then $V^u(t,\cdot)$ is differentiable at $s$ (and throughout any restart neighborhood satisfying the same localized conditions), and
\begin{equation}
\begin{split}
  \D_sV^u(t,s)
  =\E_{t,s}^u\Bigg[
  &(J_T^u)^\top\D g(S_T^u)\\
  &+\int_t^T (J_r^u)^\top
  \Bigl\{\ell_s+ (\D_su)^\top\ell_a\Bigr\}
  (r,S_r^u,u(r,S_r^u))\dd r
  \Bigg].
  \label{eq:closed-loop-value-gradient}
\end{split}
\end{equation}
Equation~\eqref{eq:closed-loop-value-gradient} is the exact fixed-policy value-gradient reference.

Along the same fixed-policy path, let $J_{\mathrm{FL}}^u$ be the fixed-latent forward tangent obtained by dropping $b_a\D_su$ and $\sigma_a\D_su$. It is the forward sensitivity dual to fixed-latent OL-BPTT, in which the state dependence of the deployed feedback is not included in the reverse-mode chain rule. Define the corresponding population first-order target by
\begin{equation}
  G_{\mathrm{OL}}^u(t,s)
  =\E_{t,s}^u\left[
  (J_{\mathrm{FL},T}^u)^\top\D g(S_T^u)
  +\int_t^T(J_{\mathrm{FL},r}^u)^\top\ell_s(r,S_r^u,u_r)\dd r
  \right].
  \label{eq:fixed-latent-target}
\end{equation}
Then the exact fixed-latent OL-BPTT defect is
\begin{equation}
\begin{split}
  \D_sV^u-G_{\mathrm{OL}}^u
  =\E_{t,s}^u\Bigg[
  &(J_T^u-J_{\mathrm{FL},T}^u)^\top\D g(S_T^u)\\
  &+\int_t^T\Bigl[
  (J_r^u-J_{\mathrm{FL},r}^u)^\top\ell_s
  +(J_r^u)^\top(\D_su)^\top\ell_a
  \Bigr]\dd r
  \Bigg].
  \label{eq:fixed-latent-defect}
\end{split}
\end{equation}
Hence a decoder supplied with the exact fixed-policy value derivative in \cref{eq:closed-loop-value-gradient} has zero first-order consistency error when its remaining Hamiltonian blocks are exact, whereas the computational fixed-latent OL-BPTT route has the explicit, auditable finite-iteration defect in \cref{eq:fixed-latent-defect}.
\end{lemma}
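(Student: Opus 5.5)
The plan is a pathwise-differentiation argument: differentiate the fixed-policy value along the flow of the closed-loop SDE in the initial state, then pass the derivative through the expectation using the assumed uniform integrability. Fix $(t,s)$ and a direction $v\in\R^{d_S}$. For small $\varepsilon$ let $S^{\varepsilon}$ denote the $u$-controlled state restarted at $s+\varepsilon v$, driven by the same Brownian path. On $\mathcal D$ the closed-loop coefficients $b^u,\sigma^u,\ell^u$ are $C^1$ in $s$ with bounded derivatives. This follows from the chain rule, since $u$ is $C^1$ with bounded $\D_su$ and $b,\sigma,\ell$ have bounded first derivatives in $(s,a)$. Because the perturbed trajectories stay in $\mathcal D\subset[0,T]\times\mathcal O$, the standard theory of differentiable stochastic flows applies. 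In $L^p$, the quotient $(S_r^{\varepsilon}-S_r^u)/\varepsilon$ converges to $J_r^uv$, where $J^u$ solves the linear SDE \eqref{eq:closed-loop-tangent}. The drift and diffusion coefficients of that linear equation are exactly $b_s+b_a\D_su$ and $\sigma_s^{(j)}+\sigma_a^{(j)}\D_su$.

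Next I write the value difference quotient as
\[
\E\Bigl[\tfrac{g(S_T^{\varepsilon})-g(S_T^u)}{\varepsilon}+\int_t^T\tfrac{\ell^u(r,S_r^{\varepsilon})-\ell^u(r,S_r^u)}{\varepsilon}\dd r\Bigr].
\]
By the mean value theorem and the $C^1$ property, the integrands converge in probability to $\D g(S_T^u)^\top J_T^uv$ and to $\{\ell_s+\ell_a\D_su\}^\top J_r^uv$, the latter $\dd r\otimes\dd\Pp$-a.e. along a subsequence. The assumed uniform integrability of the payoff quotients then lets Vitali's theorem interchange limit and expectation. Since the limit is linear in $v$, $V^u(t,\cdot)$ is Gâteaux differentiable at $s$ with the gradient stated in \eqref{eq:closed-loop-value-gradient}. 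To upgrade to Fréchet differentiability on a restart neighborhood, I would use continuity of that gradient in $s$, which follows from continuity of the flow and of $J^u$ in the initial point together with the conjugate moments. This limit interchange is the main obstacle. It requires care that the running quotients are uniformly integrable jointly in $(r,\omega)$ and that the localization to $\mathcal D$ holds for all small perturbations, which is exactly what the hypotheses supply. I would state it via Vitali rather than dominated convergence, because $\D g$ and $\ell_s$ need not be globally bounded.

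For the defect, the fixed-latent tangent $J_{\mathrm{FL}}^u$ solves the same linear SDE with the $\D_su$ terms removed, so it exists and has the same moment structure. Subtracting \eqref{eq:fixed-latent-target} from \eqref{eq:closed-loop-value-gradient} termwise gives three pieces. The terminal piece is $(J_T^u-J_{\mathrm{FL},T}^u)^\top\D g$. The running-cost piece is $(J_r^u-J_{\mathrm{FL},r}^u)^\top\ell_s$. The last piece is $(J_r^u)^\top(\D_su)^\top\ell_a$, which appears in the closed-loop formula but has no counterpart in $G_{\mathrm{OL}}^u$. Together these are exactly \eqref{eq:fixed-latent-defect}. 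Each expectation is finite by the conjugate-moment assumption and Hölder's inequality, which justifies splitting the expectations.

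The closing remark then needs no further work. A decoder supplied with the exact $\D_sV^u$ uses the same first-order block that enters $\Qcal^u$, so if its other blocks are exact it reproduces the HJB Hamiltonian gradient and $e_{\mathrm{cons}}^u$ has no first-order contribution. The OL-BPTT route, by contrast, incurs precisely the defect \eqref{eq:fixed-latent-defect}.
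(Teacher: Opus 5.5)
Your proposal follows the paper's proof essentially step for step: Kunita differentiability of the closed-loop flow, mean-value representations of the terminal and running payoff quotients, Vitali's theorem under the assumed uniform integrability, and termwise subtraction of the fixed-latent target to obtain the defect; the paper makes your localization step precise by first taking a bounded $C^1$ extension of the coefficients outside $\mathcal D$, so that the global flow theorem applies and the identities do not depend on the extension. The one difference is that you differentiate along a general direction and add a G\^ateaux-to-Fr\'echet upgrade via continuity of the gradient, which is slightly more careful than the paper's coordinate-by-coordinate computation of partial derivatives but follows the same route.
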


\begin{proof}
After a bounded $C^1$ extension outside the relevant future region, the stochastic flow generated by the closed-loop coefficients is differentiable in its initial state, and its derivative solves \cref{eq:closed-loop-tangent}; see Kunita~\citep{Kunita1990}. The resulting identities are independent of the chosen extension because the localized trajectories remain in $\mathcal D$. The uniform-integrability envelope, stochastic-flow convergence, and Vitali's theorem justify differentiation of the fixed-policy Feynman--Kac representation under the expectation and give \cref{eq:closed-loop-value-gradient}. Subtracting the fixed-latent OL-BPTT target gives \cref{eq:fixed-latent-defect}. Details are given in Appendix~\ref{app:compatibility-proof}.
\end{proof}

\begin{corollary}[Residual bound for the first-order fixed-latent OL-BPTT defect]
\label{cor:olbptt-residual-defect}
Assume the conditions of \cref{lem:closed-loop-compatibility}, with $\mathcal D$ denoting the corresponding future region. Suppose additionally that the fixed-policy equation is classically differentiable in the state and that the resulting vector first-variation equations admit the Feynman--Kac representations generated by $J_{\mathrm{FL}}^u$; for example, it is sufficient locally that $V^u\in C^{1,3}$ and that the derivatives appearing below are continuous and bounded. Define the envelope source
\begin{equation}
  r_{\mathrm{env}}^u(t,s)
  :=(\D_su(t,s))^\top
  \nabla_a\Qcal^u\bigl(t,s,u(t,s)\bigr).
  \label{eq:olbptt-envelope-source}
\end{equation}
Then the first-order fixed-latent OL-BPTT defect has the exact representation
\begin{equation}
  \D_sV^u(t,s)-G_{\mathrm{OL}}^u(t,s)
  =\E_{t,s}^u\left[
    \int_t^T
    (J_{\mathrm{FL},r}^u)^\top
    r_{\mathrm{env}}^u(r,S_r^u)\dd r
  \right].
  \label{eq:olbptt-envelope-representation}
\end{equation}
Consequently, with
\begin{equation}
  C_{\mathrm{flow}}^u(t,s)
  :=\E_{t,s}^u\left[
    \int_t^T\norm{J_{\mathrm{FL},r}^u}_{\mathrm{op}}\dd r
  \right],
  \label{eq:olbptt-flow-constant}
\end{equation}
one has
\begin{equation}
  \norm{\D_sV^u(t,s)-G_{\mathrm{OL}}^u(t,s)}
  \le
  C_{\mathrm{flow}}^u(t,s)
  \norm{r_{\mathrm{env}}^u}_{\infty;\mathcal D}.
  \label{eq:olbptt-stationarity-bound}
\end{equation}

Suppose further that, on $\mathcal D$, the difference of the fixed-policy HJB action Hamiltonian factorizes as
\begin{equation}
  \Qcal^u(t,s,a)-\Qcal^u(t,s,u(t,s))
  =w^u(t,s)\bigl[q^u(t,s,a)-q^u(t,s,u(t,s))\bigr],
  \qquad w^u>0,
  \label{eq:weighted-quadratic-advantage}
\end{equation}
where
\begin{equation}
  q^u(t,s,a)=c_0^u(t,s)+(m^u(t,s))^\top a
  -\frac12a^\top H^u(t,s)a,
  \qquad H^u(t,s)\succ0.
  \label{eq:quadratic-score-general}
\end{equation}
Assume pointwise on $\mathcal D$ either that $\Tcal(u)$ is interior, or that $u$ and $\Tcal(u)$ lie in the relative interior of the same regular active face and the columns of $\D_su$ are tangent to that face. Then
\begin{equation}
  r_{\mathrm{env}}^u
  =w^u(\D_su)^\top H^u\bigl(\Tcal(u)-u\bigr)
  =w^u(\D_su)^\top H^u\ResOp(u).
  \label{eq:olbptt-envelope-residual-identity}
\end{equation}
Define
\begin{equation}
  \norm{\ResOp(u)}_{\infty,H;\mathcal D}
  :=\sup_{(r,x)\in\mathcal D}
  \norm{\Tcal(u)(r,x)-u(r,x)}_{H^u(r,x)}
  \label{eq:olbptt-weighted-residual-norm}
\end{equation}
and
\begin{equation}
  C_{\mathrm{OL}}^u(t,s)
  :=\E_{t,s}^u\left[
    \int_t^T
    \norm{J_{\mathrm{FL},r}^u}_{\mathrm{op}}
    w^u(r,S_r^u)
    \norm{(\D_su(r,S_r^u))^\top(H^u(r,S_r^u))^{1/2}}_{\mathrm{op}}
    \dd r
  \right].
  \label{eq:olbptt-residual-constant}
\end{equation}
Whenever this quantity is finite,
\begin{equation}
  \norm{\D_sV^u(t,s)-G_{\mathrm{OL}}^u(t,s)}
  \le
  C_{\mathrm{OL}}^u(t,s)
  \norm{\ResOp(u)}_{\infty,H;\mathcal D}.
  \label{eq:olbptt-residual-scaled-bound}
\end{equation}
In particular, on such a region, a fixed point of the HJB policy-improvement operator $\Tcal(u)=u$ has zero first-order fixed-latent OL-BPTT target defect. If, along a policy sequence on a common future region $\mathcal D$, $C_{\mathrm{OL}}^u$ is uniformly bounded on compact sets of restart points and $\norm{\ResOp(u)}_{\infty,H;\mathcal D}\to0$, then the first-order OL-BPTT defect converges locally uniformly to zero.
\end{corollary}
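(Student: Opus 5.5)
The plan is to derive the representation \eqref{eq:olbptt-envelope-representation} from PDE first variations rather than from the pathwise defect \eqref{eq:fixed-latent-defect}.

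\textbf{Step 1: PDE for the value gradient.} Under the assumed classical differentiability, $V^u$ solves the fixed-policy equation
\[
\partial_tV^u+\Qcal^u(t,s,u(t,s))=0,\qquad V^u(T,\cdot)=g .
\]
I differentiate this equation in $s$. The total state derivative of $\Qcal^u(t,s,u(t,s))$ splits into two parts. The first is the partial state derivative with the action frozen; it contains $\ell_s$, $b_s^\top\D_sV^u$, the $\sigma_s$ terms contracted against $\D^2_{ss}V^u$, and the terms in which $\D^2V^u$ and $\D^3V^u$ hit the coefficients $b^u$ and $\sigma^u(\sigma^u)^\top$. The second is the chain term $(\D_su)^\top\nabla_a\Qcal^u(t,s,u)=r_{\mathrm{env}}^u$.

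\textbf{Step 2: the fixed-latent linear equation.} For $\Phi:=\D_sV^u$, the frozen-action part is a linear backward system. Its generator is the $u$-generator plus the zeroth-order coupling $b_s^\top\Phi$ and the diffusion coupling through $\sigma_s^{(j)}$. Its source is $\ell_s+r_{\mathrm{env}}^u$ and its terminal value is $\D g$.

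\textbf{Step 3: Feynman--Kac.} By the assumed Feynman--Kac representation generated by $J_{\mathrm{FL}}^u$, which is exactly the tangent with the coupling matrices $b_s$ and $\sigma_s^{(j)}$, I apply It\^o's formula to $(J_{\mathrm{FL},r}^u)^\top\Phi(r,S_r^u)$. The frozen-action terms cancel and the martingale part vanishes in expectation by the moment assumptions, giving
\[
\D_sV^u=G_{\mathrm{OL}}^u+\E_{t,s}^u\Bigl[\int_t^T (J_{\mathrm{FL},r}^u)^\top r_{\mathrm{env}}^u\,\dd r\Bigr].
\]
This is \eqref{eq:olbptt-envelope-representation}.

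\textbf{Main obstacle.} I expect this It\^o computation to be the hard step. The point to check is that the $\sigma_s$ cross-variation terms between $J_{\mathrm{FL}}$ and $\D^2V^u\sigma^u$ match the $\sigma_s$ terms in the differentiated PDE. This uses symmetry of $\D^2V$ and the third-derivative term supplied by $C^{1,3}$. I would verify it componentwise in index notation, exactly as in the standard derivation of the first-adjoint identification $\mathsf Z=\D^2V\sigma$. As a consistency check, \eqref{eq:olbptt-envelope-representation} should also be reconcilable with \eqref{eq:fixed-latent-defect} via Lemma~\ref{lem:closed-loop-compatibility}, since $J-J_{\mathrm{FL}}$ is driven by the $b_a\D_su$ and $\sigma_a\D_su$ terms.

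\textbf{Step 4: the bound.} The estimate \eqref{eq:olbptt-stationarity-bound} is immediate: take norms inside the expectation and bound $r_{\mathrm{env}}^u(r,S_r^u)$ by its supremum over $\mathcal D$. This is valid because the trajectories stay in $\mathcal D$.

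\textbf{Step 5: the residual identity.} Under \eqref{eq:weighted-quadratic-advantage}, $\nabla_a\Qcal^u(u)=w^u\nabla_a q^u(u)=w^u(m^u-H^uu)$.

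In the interior case, $\Tcal(u)=(H^u)^{-1}m^u$, so $m^u-H^uu=H^u(\Tcal(u)-u)$. This gives \eqref{eq:olbptt-envelope-residual-identity} directly, with no tangency needed.

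In the face case, KKT at $\Tcal(u)$ gives $m^u-H^u\Tcal(u)=\nu$ with $\nu$ in the normal cone of the face. Since $\Tcal(u)$ lies in the relative interior, $\nu$ is orthogonal to the face's linear span. Then
\[
m^u-H^uu=H^u(\Tcal(u)-u)+\nu,
\]
and tangency of the columns of $\D_su$ kills $(\D_su)^\top\nu$.

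\textbf{Step 6: weighted bound and consequences.} I write
\[
(\D_su)^\top H^u\ResOp(u)=\bigl[(\D_su)^\top(H^u)^{1/2}\bigr]\bigl[(H^u)^{1/2}\ResOp(u)\bigr],
\]
bound the second factor by $\norm{\ResOp(u)}_{\infty,H;\mathcal D}$, and insert this into the representation. This yields \eqref{eq:olbptt-residual-scaled-bound} with the constant $C_{\mathrm{OL}}^u$. The fixed-point case and the locally uniform convergence follow trivially from the uniform bound on $C_{\mathrm{OL}}^u$.
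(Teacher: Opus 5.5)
Your proposal is correct and follows the paper's proof. You differentiate the fixed-policy equation in the state, isolate the chain term $(\D_su)^\top\nabla_a\Qcal^u(t,s,u)$ as the source for the fixed-latent vector operator $\mathscr M_{\mathrm{FL}}^u$, apply the vector Feynman--Kac formula generated by $J_{\mathrm{FL}}^u$, and finish with KKT normal--tangent cancellation and the splitting $(\D_su)^\top H^u\ResOp(u)=[(\D_su)^\top(H^u)^{1/2}][(H^u)^{1/2}\ResOp(u)]$.

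The only differences are minor. You apply It\^o's formula directly to $(J_{\mathrm{FL},r}^u)^\top\D_sV^u(r,S_r^u)$ and read off $G_{\mathrm{OL}}^u$ from its defining expectation, which never requires $G_{\mathrm{OL}}^u$ to be a classical solution. The paper instead writes $\mathscr M_{\mathrm{FL}}^uG_{\mathrm{OL}}^u+\ell_s=0$ and applies Feynman--Kac to $\delta^u=\D_sV^u-G_{\mathrm{OL}}^u$. Also, in the face case $\nu$ is orthogonal to the direction space of the face's affine hull, not to its linear span; the direction space is all your tangency step actually uses.
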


\begin{proof}
Under the additional classical regularity, differentiating the fixed-policy equation shows that $\D_sV^u$ and $G_{\mathrm{OL}}^u$ satisfy the same vector linear backward operator generated by the fixed-latent tangent, with their difference driven only by the source in \cref{eq:olbptt-envelope-source}. The vector Feynman--Kac formula gives \cref{eq:olbptt-envelope-representation}, and \cref{eq:olbptt-stationarity-bound} follows immediately. For the quadratic Hamiltonian,
\[
  \nabla_a q^u(u)=\nabla_a q^u(\Tcal(u))
  +H^u\bigl(\Tcal(u)-u\bigr).
\]
The first term is zero at an interior maximizer. On a common regular active face it is normal to the face, while the columns of $\D_su$ are tangent, so it vanishes after left multiplication by $(\D_su)^\top$. \Cref{eq:olbptt-envelope-residual-identity,eq:olbptt-residual-scaled-bound} then follow from the operator-norm inequality. A coordinate derivation of the vector equation is given in Appendix~\ref{app:compatibility-proof}.
\end{proof}

\begin{remark}[Role of the first-order estimate]
The first-adjoint defect is proportional to the policy-improvement residual, but this alone does not control the Hamiltonian-gradient discrepancy used by the value certificate at the approximate maximizer. Proposition~\ref{prop:shifted-gradient-cancellation} confines the second-adjoint discrepancy to the trial-action diffusion displacement; an interior parabolic estimate controls the remaining martingale block. These local estimates do not by themselves establish the gain-dependent directional bound or asymptotic stationarity compatibility required for global convergence.
\end{remark}
\paragraph{Extension to the complete adjoint--HJB Hamiltonian gradient.}
\label{sec:decoded-score-compatibility}

Write
\begin{equation}
  p^u:=\D_sV^u,
  \qquad
  \Gamma^u:=\D^2_{ss}V^u,
  \qquad
  \sigma^u(t,s):=\sigma(t,s,u(t,s)),
  \label{eq:generic-decoded-reference-fields}
\end{equation}
and let
\begin{equation}
  \Hgen_{\mathrm A}^u(t,s,a)
  :=\Hgen\bigl(t,s;a,\lambda^u(t,s),\zeta^u(t,s),P^u(t,s)\bigr)
  \label{eq:population-adjoint-hamiltonian}
\end{equation}
be the population fixed-latent generalized Hamiltonian.  For a matrix-valued action map $a\mapsto\sigma(t,s,a)$, define
\begin{equation}
  \bigl[\D_a\sigma(t,s,a)\bigr]^\ast[\Xi]
  :=\left(
    \inner{\partial_{a_j}\sigma(t,s,a)}{\Xi}_F
  \right)_{j=1}^{d_u}.
  \label{eq:action-diffusion-adjoint}
\end{equation}

\begin{proposition}[Exact shifted-adjoint cancellation]
\label{prop:shifted-gradient-cancellation}
Suppose $P^u$ and $\Gamma^u$ are symmetric.  Then, at every trial action $a$ for which the displayed derivatives exist,
\begin{align}
  &\nabla_a\Hgen_{\mathrm A}^u(t,s,a)-\nabla_a\Qcal^u(t,s,a)
  \notag\\
  &\quad={}
  b_a(t,s,a)^\top\bigl(\lambda^u-p^u\bigr)
  \label{eq:shifted-gradient-cancellation}\\
  &\qquad
  +\bigl[\D_a\sigma(t,s,a)\bigr]^\ast
  \left[
    \mathsf Z^u-\Gamma^u\sigma^u
    +\bigl(P^u-\Gamma^u\bigr)
      \bigl(\sigma(t,s,a)-\sigma^u\bigr)
  \right].
  \notag
\end{align}
In particular, at $a=u(t,s)$ the second-adjoint discrepancy cancels completely:
\begin{equation}
  \nabla_a\Hgen_{\mathrm A}^u(t,s,u)
  -\nabla_a\Qcal^u(t,s,u)
  =b_a(t,s,u)^\top(\lambda^u-p^u)
   +\bigl[\D_a\sigma(t,s,u)\bigr]^\ast
    \bigl[\mathsf Z^u-\Gamma^u\sigma^u\bigr].
  \label{eq:shifted-gradient-current-action}
\end{equation}
\end{proposition}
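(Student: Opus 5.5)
The identity is a direct computation, so the plan is to differentiate both Hamiltonians in the action, subtract, and regroup. No earlier result is needed beyond the definitions \eqref{eq:shifted-adjoint}, \eqref{eq:generalized-hamiltonian}, \eqref{eq:markov-score} and the adjoint map \eqref{eq:action-diffusion-adjoint}.

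\emph{Step 1: gradient of the adjoint Hamiltonian.} Using $\Hgen_{\mathrm A}^u(a)=\ell(a)+\lambda^{u\top}b(a)+\inner{\zeta^u}{\sigma(a)}_F+\tfrac12\tr[\sigma(a)^\top P^u\sigma(a)]$, differentiate term by term. Since $P^u$ is symmetric,
\[
\partial_{a_j}\tfrac12\tr[\sigma^\top P^u\sigma]=\inner{\partial_{a_j}\sigma}{P^u\sigma(a)}_F .
\]
Hence
\[
\nabla_a\Hgen_{\mathrm A}^u=\nabla_a\ell+b_a^\top\lambda^u+[\D_a\sigma]^\ast[\zeta^u+P^u\sigma(a)].
\]
Substituting $\zeta^u=\mathsf Z^u-P^u\sigma^u$ gives the bracket $\mathsf Z^u+P^u(\sigma(a)-\sigma^u)$.

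\emph{Step 2: gradient of the HJB Hamiltonian.} Write $\tfrac12\tr[\sigma\sigma^\top\Gamma^u]=\tfrac12\tr[\sigma^\top\Gamma^u\sigma]$ and use the symmetry of $\Gamma^u$ in the same way. This gives
\[
\nabla_a\Qcal^u=\nabla_a\ell+b_a^\top p^u+[\D_a\sigma]^\ast[\Gamma^u\sigma(a)].
\]

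\emph{Step 3: subtract and regroup.} The $\ell$ terms cancel, and the drift terms give $b_a^\top(\lambda^u-p^u)$. The diffusion bracket is
\[
\mathsf Z^u+P^u(\sigma(a)-\sigma^u)-\Gamma^u\sigma(a).
\]
Add and subtract $\Gamma^u\sigma^u$, using linearity of $[\D_a\sigma]^\ast$, to rewrite it as
\[
\mathsf Z^u-\Gamma^u\sigma^u+(P^u-\Gamma^u)(\sigma(a)-\sigma^u),
\]
which is \eqref{eq:shifted-gradient-cancellation}. Setting $a=u(t,s)$ makes $\sigma(a)-\sigma^u=0$, which yields \eqref{eq:shifted-gradient-current-action}.

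The only point requiring care is the trace derivative in Steps 1 and 2. Symmetry of $P^u$ and $\Gamma^u$ is exactly what turns the two cross terms of the product rule into one Frobenius pairing without a factor mismatch. Apart from this, the argument is bookkeeping of the shift $\zeta^u=\mathsf Z^u-P^u\sigma^u$, and only pointwise existence of the derivatives is assumed.
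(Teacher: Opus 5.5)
Your proof is correct and follows the paper's argument essentially line by line. Like the paper, you differentiate the generalized Hamiltonian using symmetry of $P^u$, substitute $\zeta^u=\mathsf Z^u-P^u\sigma^u$, differentiate the HJB Hamiltonian, subtract, and add and subtract $\Gamma^u\sigma^u$; your explicit note that symmetry of $\Gamma^u$ is also needed in the HJB trace derivative is a small clarification the paper leaves implicit.
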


\begin{proof}
Differentiate \cref{eq:generalized-hamiltonian}.  Symmetry of $P^u$ gives
\[
  \nabla_a\Hgen_{\mathrm A}^u(a)
  =\ell_a+b_a^\top\lambda^u
   +[\D_a\sigma(a)]^\ast\bigl[\zeta^u+P^u\sigma(a)\bigr].
\]
Since $\zeta^u=\mathsf Z^u-P^u\sigma^u$,
$\zeta^u+P^u\sigma(a)=\mathsf Z^u+P^u(\sigma(a)-\sigma^u)$.
Differentiating \cref{eq:markov-score} gives
$\nabla_a\Qcal^u(a)=\ell_a+b_a^\top p^u+[\D_a\sigma(a)]^\ast[\Gamma^u\sigma(a)]$.
Subtracting and adding $\Gamma^u\sigma^u$ yields \cref{eq:shifted-gradient-cancellation}; setting $a=u$ yields \cref{eq:shifted-gradient-current-action}.
\end{proof}

\paragraph{Regular future tube.}
Fix compact parabolic cylinders
\[
  K\Subset K_1\Subset[0,T-\tau]\times\mathcal O,\qquad \tau>0.
\]
An open set $\mathcal D\subset[0,T)\times\mathcal O$ is a regular future tube for $(K_1,u)$ if, for every restart $(t,s)\in K_1$, the corresponding $u$-trajectory remains in $\mathcal D$ on $[t,T)$ almost surely, and throughout $\mathcal D$ either $\Tcal(u)$ is interior or $u$ and $\Tcal(u)$ lie in the relative interior of the same regular active face, with the columns of $\D_su$ tangent to that face. Set
\begin{equation}
  \mathfrak r_{\mathcal D}(u)
  :=\norm{\Tcal(u)-u}_{L^\infty(\mathcal D)}.
  \label{eq:future-tube-residual}
\end{equation}

\paragraph{Scope.}
The regular-tube result is intentionally a fixed-stratum local statement. Under nondegenerate state noise, requiring every restarted path to remain in $\mathcal D$ is a strong reachable-set condition and can effectively require one regular face on the region reached from $K_1$; the result is therefore not intended to cover trajectories crossing active-set switching regions. Such regions are not controlled by Theorem~\ref{thm:generic-decoded-score-compatibility}. The global CRRA analysis instead uses the directional occupation and asymptotic stationarity-compatibility hypotheses in \cref{ass:crra-adjoint-compatible}; the audited norm-relative condition is a stronger sufficient route. No pointwise residual-scaled estimate across a switching surface is asserted.

\begin{assumption}[Regularity for adjoint--HJB consistency]
\label{ass:generic-decoded-score-region}
On a regular future tube $\mathcal D$, the direct-action coefficients have bounded derivatives through order two, $V^u\in C^{1,3}$, and the fixed-latent first-adjoint defect admits the local parabolic estimate implied by \cref{cor:olbptt-residual-defect}. The restarted second-adjoint BSDE has a locally bounded solution, and on $K$ the policies $u$, $\Tcal(u)$, and $\Acal(u)$ lie in the relative interior of one common regular affine face. Along that face, the adjoint and HJB Hamiltonians are uniformly strongly concave; $b_a$, $\D_a\sigma$, and the HJB action curvature are bounded, and $\sigma$ is locally Lipschitz in the action. Whenever normalized Hamiltonians are used, their common positive action-independent normalizer and its reciprocal are locally bounded on $K$. The precise moment and interior-estimate conditions sufficient for these statements are given in Appendix~\ref{app:supp-generic}.
\end{assumption}

\begin{theorem}[Residual bound for the adjoint--HJB Hamiltonian-gradient discrepancy]
\label{thm:generic-decoded-score-compatibility}
Under \cref{ass:generic-decoded-score-region}, there is a finite local constant $C_{\mathrm{dec}}$ such that
\begin{equation}
  \sup_{(t,s)\in K}
  \norm{
    \nabla_a\Hgen_{\mathrm A}^u\bigl(t,s,\Acal(u)(t,s)\bigr)
    -\nabla_a\Qcal^u\bigl(t,s,\Acal(u)(t,s)\bigr)
  }
  \le C_{\mathrm{dec}}\,\mathfrak r_{\mathcal D}(u),
  \label{eq:generic-decoded-score-residual}
\end{equation}
and, if the HJB action Hamiltonian is $\mu_{\mathrm M}$-strongly concave on the common face,
\begin{equation}
  \norm{\Acal(u)-\Tcal(u)}_{L^\infty(K)}
  \le \frac{C_{\mathrm{dec}}}{\mu_{\mathrm M}}
  \mathfrak r_{\mathcal D}(u).
  \label{eq:generic-map-distance-residual}
\end{equation}
Thus every regular fixed point of $\Tcal$ is also a fixed point of $\Acal$, even though $P^u$ need not approach $\D^2_{ss}V^u$.
\end{theorem}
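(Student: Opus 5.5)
Using Proposition~\ref{prop:shifted-gradient-cancellation} at the trial action $a=\Acal(u)(t,s)$, we split the Hamiltonian-gradient discrepancy into three blocks:
\[
  b_a^\top(\lambda^u-p^u),\qquad
  [\D_a\sigma]^\ast\bigl[\mathsf Z^u-\Gamma^u\sigma^u\bigr],\qquad
  [\D_a\sigma]^\ast\bigl[(P^u-\Gamma^u)(\sigma(\Acal(u))-\sigma^u)\bigr].
\]
Each block will be bounded by a multiple of $\mathfrak r_{\mathcal D}(u)$. By \cref{ass:generic-decoded-score-region}, $b_a$ and $\D_a\sigma$ are bounded on the common face, so every bound reduces to a bound on the adjoint field itself.

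For the first block, we identify the restarted population first adjoint $\lambda^u(t,s)$ with the fixed-latent target $G_{\mathrm{OL}}^u(t,s)$. This holds because the linear BSDE \eqref{eq:first-adjoint-bsde} is dual to the fixed-latent tangent $J_{\mathrm{FL}}^u$. \Cref{cor:olbptt-residual-defect}, specifically \eqref{eq:olbptt-residual-scaled-bound}, then gives $\norm{\lambda^u-p^u}\le C_{\mathrm{OL}}^u\,\mathfrak r_{\mathcal D}(u)$ on $K$. The constant is uniform because $K$ is compact and $w^u$, $\D_su$, $H^u$ and the moments of $J_{\mathrm{FL}}^u$ are locally bounded. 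We pass from the $H$-weighted norm to $L^\infty$ using the bounded HJB curvature.

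The second block is the main obstacle. In the Markov setting, $\mathsf Z^u=\nabla_s\lambda^u\,\sigma^u$, since the martingale coefficient of $\lambda^u(r,S_r^u)$ is the state derivative times the diffusion. Likewise $\Gamma^u\sigma^u=\nabla_s p^u\,\sigma^u$. Hence $\mathsf Z^u-\Gamma^u\sigma^u=\nabla_s(\lambda^u-p^u)\,\sigma^u$, and the task is a gradient bound on the defect $\delta^u:=\lambda^u-p^u$.

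By the proof of \cref{cor:olbptt-residual-defect}, $\delta^u$ solves a linear vector parabolic system on $\mathcal D$ with source $r_{\mathrm{env}}^u=w^u(\D_su)^\top H^u\ResOp(u)$. That source has sup norm $O(\mathfrak r_{\mathcal D}(u))$. The zeroth-order bound from the previous step is also $O(\mathfrak r_{\mathcal D}(u))$. An interior parabolic estimate on $K\Subset K_1$ (Schauder or $W^{2,1}_p$ type, requiring $V^u\in C^{1,3}$ and coefficient derivatives through order two) will give $\norm{\nabla_s\delta^u}_{L^\infty(K)}\lesssim\norm{\delta^u}_{L^\infty(K_1)}+\norm{r_{\mathrm{env}}^u}_{L^\infty(\mathcal D)}$. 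This depends on the estimate being available from only bounded (not Hölder) source dependence. We plan to use an $L^p$ estimate with $p$ large together with Sobolev embedding, and to defer the precise conditions to Appendix~\ref{app:supp-generic}. The margin $\tau>0$ keeps $K$ away from the terminal time.

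For the third block, $P^u$ is locally bounded by assumption and $\Gamma^u$ is bounded by $C^{1,3}$ regularity, so it suffices to bound $\norm{\sigma(\Acal(u))-\sigma^u}\le L\norm{\Acal(u)-u}$ using the local Lipschitz property. We write $\norm{\Acal(u)-u}\le\norm{\Acal(u)-\Tcal(u)}+\mathfrak r_{\mathcal D}(u)$, which is circular. To close it, we use strong concavity on the common face. Both maximizers lie in the relative interior of the same affine face, so the tangential projections of $\nabla_a\Hgen_{\mathrm A}^u(\Acal(u))$ and $\nabla_a\Qcal^u(\Tcal(u))$ both vanish. Hence
\[
\mu_{\mathrm M}\norm{\Acal(u)-\Tcal(u)}\le\norm{e(\Acal(u))}\le C_1\mathfrak r+C_2\norm{\Acal(u)-u}.
\]
This works directly only if $C_2<\mu_{\mathrm M}$. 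Instead, we expand the third term at $\Tcal(u)$ rather than at $\Acal(u)$, using the difference of the Hamiltonian gradients, which is affine in $\sigma(a)$ in that block. This yields an implicit inequality of the form $\norm{\Acal(u)-\Tcal(u)}\le C\mathfrak r$ once the uniform strong concavity of the adjoint Hamiltonian is used to absorb the term. Substituting back gives \eqref{eq:generic-decoded-score-residual}. The same strong-concavity argument with the final constant gives \eqref{eq:generic-map-distance-residual}. Setting $\mathfrak r=0$ yields the fixed-point statement.
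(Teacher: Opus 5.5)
Your proposal is correct and is essentially the paper's proof. It shares the three-block split from \cref{prop:shifted-gradient-cancellation} and the identification $\lambda^u=G_{\mathrm{OL}}^u$, $\mathsf Z^u=(\D_sG_{\mathrm{OL}}^u)\sigma^u$, so that $\mathsf Z^u-\Gamma^u\sigma^u$ is the state gradient of the defect times $\sigma^u$. It bounds that gradient by the same interior $W^{2,1}_p$ estimate with $p>d_S+2$ plus embedding, uses only local boundedness of $P^u-\Gamma^u$, and ends with HJB strong concavity for \eqref{eq:generic-map-distance-residual}.

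The one difference is the base point of the displacement step. There your wording (``implicit inequality'', ``absorb'') undersells what you have, because no absorption or smallness condition is needed. Write $e(a):=\nabla_a\Hgen_{\mathrm A}^u(a)-\nabla_a\Qcal^u(a)$, let $L$ be the tangent space of the common face, and let $\mu_{\mathrm A}$ be the adjoint strong-concavity modulus. Tangential stationarity at both maximizers then gives directly
\[
  \mu_{\mathrm A}\norm{\Acal(u)-\Tcal(u)}^2
  \le\inner{\Pi_L\nabla_a\Hgen_{\mathrm A}^u(\Tcal(u))}{\Acal(u)-\Tcal(u)}
  =\inner{\Pi_L e(\Tcal(u))}{\Acal(u)-\Tcal(u)}.
\]
Moreover $\norm{e(\Tcal(u))}\le C\mathfrak r_{\mathcal D}(u)$, because the second-adjoint block at $\Tcal(u)$ carries $\sigma(\Tcal(u))-\sigma^u$, which is at most $L_\sigma\mathfrak r_{\mathcal D}(u)$.

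The paper instead takes the base point $a=u$, where that block vanishes identically by \eqref{eq:shifted-gradient-current-action}. It then uses tangential KKT at $\Tcal(u)$ and Lipschitz continuity of $\nabla_a\Qcal^u$ to get $\norm{\Pi_L\nabla_a\Hgen_{\mathrm A}^u(u)}\le C\mathfrak r_{\mathcal D}(u)$, and hence $\norm{\Acal(u)-u}\le C_{\mathrm{step}}\mathfrak r_{\mathcal D}(u)$.

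The paper's choice keeps the displacement bound independent of $P^u-\Gamma^u$, which is the point of the shifted-adjoint construction. Yours uses the $P^u-\Gamma^u$ bound and the action-Lipschitz constant of $\sigma$ one step earlier. In exchange it yields $\norm{\Acal(u)-\Tcal(u)}=O(\mathfrak r_{\mathcal D}(u))$ immediately, after which $\norm{\Acal(u)-u}\le(1+C/\mu_{\mathrm A})\mathfrak r_{\mathcal D}(u)$ enters the final evaluation at $\Acal(u)$ exactly as in the paper.
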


\begin{proof}
The first-adjoint value-gradient and martingale-coefficient defects are $O(\mathfrak r_{\mathcal D}(u))$ by the fixed-latent envelope equation and an interior parabolic estimate. The second adjoint is locally bounded. By \cref{prop:shifted-gradient-cancellation}, its discrepancy from the value Hessian is multiplied by $\sigma(\Acal(u))-\sigma(u)$; strong concavity and the first-adjoint estimate make this policy displacement $O(\mathfrak r_{\mathcal D}(u))$. Combining the terms gives \cref{eq:generic-decoded-score-residual}, and the variational inequalities for the two strongly concave maximization problems give \cref{eq:generic-map-distance-residual}. Appendix~\ref{app:decoded-score-compatibility-proof} gives the complete estimates.
\end{proof}

\subsection{Quadratic policy improvement and value-improvement bounds}
\label{sec:value-improvement}

For a trial action $a$, define the controlled generator
\begin{equation}
  \Lcal^a\phi(t,s)
  =b(t,s,a)^\top\D_s\phi(t,s)
  +\frac12\tr\!\left[\sigma\sigma^\top(t,s,a)\D^2_{ss}\phi(t,s)\right],
  \qquad
  \Lcal^u\phi:=\Lcal^{u(t,s)}\phi.
  \label{eq:controlled-generator}
\end{equation}
The fixed-policy action advantage of replacing $u(t,s)$ by $a$ is
\begin{equation}
  \Adv^u(t,s,a)
  :=\ell(t,s,a)+\Lcal^aV^u(t,s)
  -\ell^u(t,s)-\Lcal^uV^u(t,s)
  =\Qcal^u(t,s,a)-\Qcal^u(t,s,u(t,s)).
  \label{eq:markov-advantage}
\end{equation}
For the value-improvement results, retain the factorization and normalized quadratic Hamiltonian in \cref{eq:weighted-quadratic-advantage,eq:quadratic-score-general}, take $q^u=q_{\mathrm{HJB}}^u$, and strengthen the curvature bound to $0<\underline h I\preceq H^u\preceq\overline h I$. The linear and quadratic coefficients may be obtained from first/second adjoints or supplied partly by model structure. The pointwise feasible set $\Kset:=\U(t,s)$ is closed and convex.

\begin{assumption}[Performance-difference regularity]
\label{ass:performance-difference}
For each reference feedback $u$ and alternative feedback $v$ used below, $V^u\in C^{1,2}$ solves the fixed-policy equation
\begin{equation}
  \partial_tV^u+\ell^u+\Lcal^uV^u=0,\qquad V^u(T,\cdot)=g,
  \label{eq:fixed-policy-backward-equation}
\end{equation}
on an invariant state domain. Localization and uniform-integrability conditions justify It\^o's formula under the law of $v$ and passage to the terminal time. Consequently,
\begin{equation}
  V^v(t,s)-V^u(t,s)
  =\E_{t,s}^v\left[
    \int_t^T\Adv^u(r,S_r^v,v(r,S_r^v))\dd r
  \right].
  \label{eq:performance-difference}
\end{equation}
For the CRRA models below, the stated affine-factor and bounded-policy assumptions, together with the corresponding negative wealth moments, provide this integrability.
\end{assumption}

For a closed convex set $\Kset$, use
\begin{equation}
  N_{\Kset}(a)
  :=\left\{v:\ \inner{v}{b-a}\le0\ \text{for every }b\in\Kset\right\}.
  \label{eq:normal-cone}
\end{equation}
\begin{lemma}[Stability of the greedy control under Hamiltonian-gradient error]
\label{lem:quadratic-regret}
At a fixed $(t,s)$, let $a^\star$ maximize the quadratic HJB Hamiltonian $q^u$ over $\Kset$, and let $\widehat a$ maximize a differentiable represented Hamiltonian $\widehat q^u$ over the same set. Define
\begin{equation}
  \varepsilon_{\mathrm{grad}}
  :=\norm{\nabla_a\widehat q^u(\widehat a)-\nabla_a q^u(\widehat a)}_{(H^u)^{-1}}.
\end{equation}
Then
\begin{equation}
  \norm{\widehat a-a^\star}_{H^u}\le\varepsilon_{\mathrm{grad}},
  \qquad
  0\le q^u(a^\star)-q^u(\widehat a)
  \le\frac12\varepsilon_{\mathrm{grad}}^2.
  \label{eq:quadratic-regret-bound}
\end{equation}
Thus errors in either the linear adjoint coefficient or the approximated curvature are controlled through the gradient mismatch at the approximate maximizer.
\end{lemma}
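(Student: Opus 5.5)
The plan is to combine the two variational inequalities for the maximizers over the same closed convex set $\Kset$ with the exact quadratic structure of $q^u$. Write $H=H^u$, $\widehat g:=\nabla_a\widehat q^u(\widehat a)$, and $g:=\nabla_a q^u(\widehat a)=m^u-H\widehat a$.

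\emph{Step 1 (optimality conditions).} Since $\widehat a$ maximizes the differentiable function $\widehat q^u$ over the convex set $\Kset$, first-order necessity gives $\widehat g\in N_{\Kset}(\widehat a)$, that is, $\inner{\widehat g}{b-\widehat a}\le0$ for all $b\in\Kset$. No concavity of $\widehat q^u$ is needed for this, only differentiability and convexity of $\Kset$. Since $a^\star$ maximizes the strongly concave $q^u$, we also have $\inner{\nabla_a q^u(a^\star)}{b-a^\star}\le0$ for all $b\in\Kset$.

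\emph{Step 2 (distance bound).} Test the first inequality at $b=a^\star$ and the second at $b=\widehat a$, then add. Using $\nabla_a q^u(a^\star)-\nabla_a q^u(\widehat a)=-H(a^\star-\widehat a)$, this yields
\[
\norm{a^\star-\widehat a}_H^2\le\inner{\widehat g-g}{a^\star-\widehat a}.
\]
By Cauchy--Schwarz in the dual pairing, $\inner{y}{x}\le\norm{y}_{H^{-1}}\norm{x}_H$, the right side is at most $\varepsilon_{\mathrm{grad}}\norm{a^\star-\widehat a}_H$. Dividing (the case $\widehat a=a^\star$ being trivial) gives the first bound.

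\emph{Step 3 (value gap).} The lower bound $0\le q^u(a^\star)-q^u(\widehat a)$ is immediate from the optimality of $a^\star$ and the feasibility of $\widehat a$. For the upper bound, use the exact quadratic expansion about $a^\star$:
\[
q^u(a^\star)-q^u(\widehat a)=-\inner{\nabla_a q^u(a^\star)}{\widehat a-a^\star}+\tfrac12\norm{\widehat a-a^\star}_H^2.
\]
The inner-product term is not automatically zero, since the constraint may be active. The direct attempt therefore only gives $\tfrac12\varepsilon^2$ plus a nonnegative multiplier term, so I will instead expand about $\widehat a$:
\[
q^u(a^\star)-q^u(\widehat a)=\inner{g}{a^\star-\widehat a}-\tfrac12\norm{a^\star-\widehat a}_H^2.
\]
Write $g=\widehat g+(g-\widehat g)$. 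The term $\inner{\widehat g}{a^\star-\widehat a}$ is $\le0$ by Step 1. The remaining term is at most $\varepsilon_{\mathrm{grad}}\,x$ with $x:=\norm{a^\star-\widehat a}_H$. Hence the gap is at most $\varepsilon_{\mathrm{grad}}x-\tfrac12x^2\le\tfrac12\varepsilon_{\mathrm{grad}}^2$, after maximizing over $x\ge0$.

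\emph{Main obstacle.} The one point needing care is Step 3: expanding around $a^\star$ leaves an uncontrolled normal-cone term. Expanding around $\widehat a$ and using the variational inequality for $\widehat a$ resolves this.

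The closing remark of the lemma then follows. Errors in either the linear coefficient or the curvature of $\widehat q^u$ enter only through $\widehat g-g$, evaluated at the single point $\widehat a$.
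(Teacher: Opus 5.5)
Your proof is correct and follows the paper's argument essentially step for step: adding the two variational inequalities is the normal-cone monotonicity the paper invokes, and the regret bound uses the same exact quadratic expansion about $\widehat a$, the variational inequality for $\widehat q^u$, and completing the square. One cosmetic slip is that the inequality displayed in your Step 2 should read $\norm{a^\star-\widehat a}_H^2\le\inner{g-\widehat g}{a^\star-\widehat a}$ (your pairing has the wrong sign), but this is harmless because Cauchy--Schwarz bounds its absolute value by $\varepsilon_{\mathrm{grad}}\norm{a^\star-\widehat a}_H$ anyway.
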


\begin{proof}
The optimality conditions are
$\nabla q^u(a^\star)\in N_{\Kset}(a^\star)$ and
$\nabla\widehat q^u(\widehat a)\in N_{\Kset}(\widehat a)$.
Monotonicity of the normal cone gives
\[
\begin{split}
  0&\le
  \inner{\nabla q^u(a^\star)-\nabla\widehat q^u(\widehat a)}
  {a^\star-\widehat a}\\
  &=-\norm{\widehat a-a^\star}_{H^u}^2
  +\inner{\nabla\widehat q^u(\widehat a)-\nabla q^u(\widehat a)}
  {\widehat a-a^\star},
\end{split}
\]
which proves the distance bound. Exact quadraticity gives
\[
  q^u(a^\star)-q^u(\widehat a)
  =\nabla q^u(\widehat a)^\top(a^\star-\widehat a)
  -\frac12\norm{\widehat a-a^\star}_{H^u}^2.
\]
The variational inequality for $\widehat q^u$ bounds the first term by
$\varepsilon_{\mathrm{grad}}\norm{\widehat a-a^\star}_{H^u}$; completing the square yields the regret bound.
\end{proof}

\begin{proposition}[Exact damped value improvement]
\label{thm:exact-value-improvement}
Let
\begin{equation}
  u_\beta=(1-\beta)u+\beta\Tcal(u),
  \qquad 0<\beta\le1.
\end{equation}
Suppose \cref{ass:performance-difference} holds for $(u,u_\beta)$, \cref{eq:weighted-quadratic-advantage} holds, and $\Kset$ is convex. Then $u_\beta$ is admissible and
\begin{equation}
\begin{split}
  V^{u_\beta}(t,s)-V^u(t,s)
  \ge{}&\frac{\beta(2-\beta)}{2}
  \E_{t,s}^{u_\beta}\left[
  \int_t^T w^u(r,S_r^{u_\beta})
  \norm{\Tcal(u)-u}_{H^u}^2(r,S_r^{u_\beta})\dd r
  \right].
  \label{eq:exact-value-improvement}
\end{split}
\end{equation}
Thus damping is a conservative action interpolation with an explicit, nonnegative improvement certificate. This is analogous in purpose to conservative policy-iteration analyses \citep{KakadeLangford2002}, but here the mixture is deterministic in a convex continuous action set and the lower bound follows from the explicit quadratic policy-improvement geometry.
\end{proposition}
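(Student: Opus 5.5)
The plan is to combine the performance-difference identity \cref{eq:performance-difference}, applied with $v=u_\beta$, with a pointwise lower bound on the advantage $\Adv^u(r,x,u_\beta(r,x))$ that follows from the quadratic factorization \cref{eq:weighted-quadratic-advantage}. The only probabilistic input is \cref{ass:performance-difference}. Everything else is pointwise algebra at a fixed $(r,x)$.

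First, admissibility. Both $u(r,x)$ and $\Tcal(u)(r,x)$ lie in the closed convex set $\Kset=\U(r,x)$, so their convex combination $u_\beta(r,x)$ does as well. Measurability is inherited from the fixed measurable selector $\Tcal(u)$. Any integrability or regularity needed for the controlled SDE under $u_\beta$ is part of what \cref{ass:performance-difference} grants for the pair $(u,u_\beta)$.

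Second, the pointwise estimate. Fix $(r,x)$ and write $a^\star=\Tcal(u)$, $d=a^\star-u$, and $H=H^u$. By \cref{eq:weighted-quadratic-advantage},
\[
\Adv^u(r,x,u_\beta)=w^u\bigl[q^u(u+\beta d)-q^u(u)\bigr].
\]
Because $q^u$ is exactly quadratic, this bracket equals $\beta\nabla q^u(u)^\top d-\tfrac{\beta^2}{2}\norm{d}_H^2$. We use the identity $\nabla q^u(u)=\nabla q^u(a^\star)+Hd$, which was already used in the proof of \cref{cor:olbptt-residual-defect}. It gives
\[
\nabla q^u(u)^\top d=\nabla q^u(a^\star)^\top(a^\star-u)+\norm{d}_H^2.
\]
The first term is nonnegative. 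The reason is that the KKT condition $\nabla q^u(a^\star)\in N_{\Kset}(a^\star)$, together with $u\in\Kset$ and \cref{eq:normal-cone}, gives $\inner{\nabla q^u(a^\star)}{u-a^\star}\le0$. Hence
\[
\Adv^u(r,x,u_\beta)\ge w^u\Bigl(\beta-\tfrac{\beta^2}{2}\Bigr)\norm{d}_H^2=\tfrac{\beta(2-\beta)}{2}\,w^u\norm{\Tcal(u)-u}_{H^u}^2,
\]
which is nonnegative because $w^u>0$ and $0<\beta\le1$.

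Third, integration. We evaluate the pointwise bound along $(r,S_r^{u_\beta})$, insert it into \cref{eq:performance-difference} with $v=u_\beta$, and use monotonicity of the expectation. This yields \cref{eq:exact-value-improvement}. The integrand is nonnegative and measurable, so the right-hand side is well defined, possibly equal to $+\infty$, in which case the left-hand side is also $+\infty$ and the inequality still holds. The left-hand side is finite under the assumed integrability.

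The main obstacle is the constrained case. There the gradient $\nabla q^u(a^\star)$ does not vanish, and the sign of the cross term depends on the variational inequality being tested against the feasible point $u$. This is exactly why convexity of $\Kset$ and feasibility of the current policy are needed. Given those two facts, the remaining steps are routine.
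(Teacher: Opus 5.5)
Your proposal is correct and follows the paper's own proof. The variational inequality at $\Tcal(u)$, tested against the feasible point $u$, gives $\nabla q^u(\Tcal(u))^\top d\ge0$; the exact quadratic expansion with $\nabla q^u(u)=\nabla q^u(\Tcal(u))+H^u d$ then yields the pointwise bound $\tfrac{\beta(2-\beta)}{2}\norm{d}_{H^u}^2$, which is inserted into the performance-difference identity under the law of $u_\beta$. You simply spell out the gradient identity and the convex-combination argument for admissibility, both of which the paper leaves implicit.
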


\begin{proof}
Pointwise, let $d=\Tcal(u)-u$. The variational inequality at the constrained maximizer gives $\nabla q^u(\Tcal(u))^\top d\ge0$. Since $q^u$ is exactly quadratic,
\[
  q^u(u+\beta d)-q^u(u)
  =\beta\nabla q^u(u)^\top d
  -\frac{\beta^2}{2}\norm{d}_{H^u}^2
  \ge\frac{\beta(2-\beta)}2\norm{d}_{H^u}^2.
\]
Insert this inequality into \cref{eq:performance-difference} under the new policy law.
\end{proof}

\paragraph{Three-level operator hierarchy.}
For the finite-computation statement, suppress the outer index and write $\AcalNum:=\AcalNum_k$. The hierarchy
\[
  \AcalNum_k \longrightarrow \Acal \longrightarrow \Tcal
\]
separates numerical error from population-level adjoint--HJB consistency. Corollary~\ref{cor:relative-adjoint-step} isolates the second arrow by setting $\AcalNum=\Acal$ and removing discretization, sampling, representation, continuation-extension, and QP-solver errors.

The finite-computation theorem uses the gradient error of the represented action Hamiltonian rather than a single end-to-end policy-update norm. Let $\widehat q^u$ be the normalized quadratic Hamiltonian constructed from the estimated adjoint field and the QP decoder. At a feasible action $a$, write
\begin{equation}
  e_{\mathrm{ham}}^u(a)
  :=\nabla_a\widehat q^u(a)-\nabla_a q^u(a)
  =e_{\mathrm{cons}}^u(a)+e_{\mathrm{disc}}^u(a)+e_{\mathrm{stat}}^u(a)
  +e_{\mathrm{repr}}^u(a)+e_{\mathrm{ext}}^u(a).
  \label{eq:score-error-decomposition}
\end{equation}
The five components are the adjoint--HJB discrepancy, time discretization, statistical/conditional estimation, representation, and continuation extension. They are measured after propagation through the adjoint-to-Hamiltonian construction. The continuation-extension term collects terminal, boundary/tail, and off-grid rules needed to complete a locally represented Hamiltonian. Any component may affect both the linear coefficient and the quadratic curvature. For the computed OL-BPTT policy update $\AcalNum(u)$, define
\begin{equation}
  \varepsilon_j(u)
  :=\norm{e_j^u(\AcalNum(u))}_{(H^u)^{-1}},
  \qquad
  j\in\{\mathrm{cons},\mathrm{disc},\mathrm{stat},\mathrm{repr},\mathrm{ext}\},
\end{equation}
and the pointwise Hamiltonian-gradient error bound
\begin{equation}
  \epsham(u)
  :=\norm{e_{\mathrm{ham}}^u(\AcalNum(u))}_{(H^u)^{-1}}
  \le\epscons(u)+\epsdisc(u)+\epsstat(u)+\epsrepr(u)+\epsext(u).
  \label{eq:score-gradient-error-radius}
\end{equation}
For a numerically computed policy update in a bounded constrained action set, define the computable KKT gap of $\widehat q^u$ by \citep{NocedalWright2006}
\begin{equation}
  \epsrec(u)
  :=\left[
  \sup_{a\in\Kset}
  \inner{\nabla_a\widehat q^u(\AcalNum(u))}
  {a-\AcalNum(u)}
  \right]_+.
  \label{eq:recovery-kkt-gap}
\end{equation}
Here $[x]_+:=\max\{x,0\}$. The numerical sources are therefore five Hamiltonian-gradient error components plus the local optimality gap. For an exact quadratic-program solve, $\epsrec=0$.

\begin{theorem}[Approximate value improvement under decomposed numerical errors]
\label{thm:approx-value-improvement}
Set
\begin{equation}
  d^u=\AcalNum(u)-u,
  \qquad
  u^+=u+\beta d^u,
  \qquad 0<\beta\le1.
\end{equation}
Suppose \cref{ass:performance-difference} holds for $(u,u^+)$, \cref{eq:weighted-quadratic-advantage} holds, and $\Kset$ is convex. Conditional on the represented adjoint Hamiltonian, the pointwise HJB Hamiltonian gain satisfies
\begin{align}
  q^u(u+\beta d^u)-q^u(u)
  &\ge\beta\left[
  \frac{2-\beta}{2}\norm{d^u}_{H^u}^2
  -\inner{e_{\mathrm{ham}}^u(\AcalNum(u))}{d^u}
  -\epsrec(u)
  \right]
  \label{eq:approx-score-gain-directional}\\
  &\ge\beta\left[
  \frac{2-\beta}{2}\norm{d^u}_{H^u}^2
  -\epsham(u)\norm{d^u}_{H^u}
  -\epsrec(u)
  \right]
  \label{eq:approx-score-gain-sharp}\\
  &\ge\beta\left[
  \frac{2-\beta}{4}\norm{d^u}_{H^u}^2
  -\frac{\epsham(u)^2}{2-\beta}
  -\epsrec(u)
  \right].
  \label{eq:approx-score-gain-young}
\end{align}
Consequently,
\begin{equation}
\begin{split}
  V^{u^+}(t,s)-V^u(t,s)
  \ge\beta\E_{t,s}^{u^+}\Bigg[
  \int_t^T w^u
  \left[
  \frac{2-\beta}{4}\norm{d^u}_{H^u}^2
  -\frac{\epsham(u)^2}{2-\beta}
  -\epsrec(u)
  \right]\dd r
  \Bigg],
  \label{eq:approx-value-improvement}
\end{split}
\end{equation}
where all integrands are evaluated along $S^{u^+}$. A computed adjoint-based policy update is therefore certified whenever its weighted policy-update magnitude dominates the squared Hamiltonian-gradient error and the local optimality gap.
\end{theorem}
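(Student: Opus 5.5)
The plan is to work pointwise at a fixed $(t,s)$ with the exact quadratic expansion of $q^u$ around $u$, exactly as in the proof of \cref{thm:exact-value-improvement}, and then push the pointwise bound through the performance-difference identity \cref{eq:performance-difference}. Write $\widehat a:=\AcalNum(u)$ and $d:=d^u=\widehat a-u$. Since $q^u$ is exactly quadratic,
\[
  q^u(u+\beta d)-q^u(u)=\beta\nabla q^u(u)^\top d-\frac{\beta^2}{2}\norm{d}_{H^u}^2,
\]
and $\nabla q^u(u)=\nabla q^u(\widehat a)+H^u d$. This gives $\nabla q^u(u)^\top d=\nabla q^u(\widehat a)^\top d+\norm{d}_{H^u}^2$. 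Next I split $\nabla q^u(\widehat a)=\nabla\widehat q^u(\widehat a)-e_{\mathrm{ham}}^u(\widehat a)$. The definition \cref{eq:recovery-kkt-gap} of the KKT gap, with the feasible choice $a=u\in\Kset$, yields $\nabla\widehat q^u(\widehat a)^\top(u-\widehat a)\le\epsrec(u)$, hence $\nabla\widehat q^u(\widehat a)^\top d\ge-\epsrec(u)$. Collecting terms,
\[
  q^u(u+\beta d)-q^u(u)\ge\beta\Bigl[\Bigl(1-\frac\beta2\Bigr)\norm{d}_{H^u}^2-\inner{e_{\mathrm{ham}}^u(\widehat a)}{d}-\epsrec(u)\Bigr],
\]
which is \cref{eq:approx-score-gain-directional}. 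Feasibility of $u^+$ follows from convexity of $\Kset$, since $u^+$ is a convex combination of $u$ and $\widehat a$.

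The second inequality \cref{eq:approx-score-gain-sharp} is the Cauchy--Schwarz inequality in the dual pairing, $|\inner{e}{d}|\le\norm{e}_{(H^u)^{-1}}\norm{d}_{H^u}$, combined with the definition of $\epsham(u)$. The third, \cref{eq:approx-score-gain-young}, is Young's inequality:
\[
  \epsham\norm{d}_{H^u}\le\frac{2-\beta}{4}\norm{d}_{H^u}^2+\frac{\epsham^2}{2-\beta}.
\]
Substituting this leaves the coefficient $\frac{2-\beta}{2}-\frac{2-\beta}{4}=\frac{2-\beta}{4}$ on $\norm{d}_{H^u}^2$.

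Finally, by \cref{eq:weighted-quadratic-advantage} the advantage is $\Adv^u(r,x,u^+(r,x))=w^u\,[q^u(u^+)-q^u(u)]$ with $w^u>0$, so multiplying the pointwise bound by $w^u$ preserves it. I then insert it into \cref{eq:performance-difference}, applied under \cref{ass:performance-difference} for the pair $(u,u^+)$ and evaluated along $S^{u^+}$, which gives \cref{eq:approx-value-improvement}.

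The only delicate point is measurability and integrability of the lower-bound integrand, in particular of $\epsham$ and $\epsrec$ along $S^{u^+}$. I would handle it by taking the represented Hamiltonian as fixed (conditioning on it, as the statement does), so every quantity is a deterministic measurable function of $(r,x)$. The remaining integrability of the lower bound is inherited from the integrability in \cref{ass:performance-difference}, or else the inequality holds in the extended sense. Everything else is the algebra above, so I do not expect a genuine obstacle.
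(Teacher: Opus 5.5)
Your proposal is correct and follows the paper's proof essentially step for step. You use the exact quadratic identity $\nabla q^u(u)=\nabla\widehat q^u(\widehat a)-e_{\mathrm{ham}}^u(\widehat a)+H^u d$ together with the KKT-gap inequality (testing the supremum at the feasible point $a=u$), then Cauchy--Schwarz in the $H^u$/$(H^u)^{-1}$ pairing, Young's inequality, and insertion into the performance-difference identity via $w^u>0$; your extra remarks on feasibility of $u^+$ and on the extended-sense integrability of the lower bound are sound details that the paper leaves implicit.
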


\begin{proof}
Let $d=d^u$ and $\widetilde a=\AcalNum(u)$. By the KKT-gap definition,
$\nabla\widehat q^u(\widetilde a)^\top d\ge-\epsrec(u)$.
Exact quadraticity of the HJB action Hamiltonian gives
\[
  \nabla q^u(u)
  =\nabla q^u(\widetilde a)+H^ud
  =\nabla\widehat q^u(\widetilde a)-e_{\mathrm{ham}}^u(\widetilde a)+H^ud.
\]
Combining this identity with the KKT-gap inequality gives
\cref{eq:approx-score-gain-directional}. Cauchy--Schwarz gives
\cref{eq:approx-score-gain-sharp}; Young's inequality gives
\cref{eq:approx-score-gain-young}. The value statement follows from
\cref{eq:performance-difference}.
\end{proof}
\begin{corollary}[Pointwise relative-error transfer for the adjoint-based operator]
\label{cor:relative-adjoint-step}
At a fixed $(t,s)$, let $\Acal(u)$ be the exact maximizer over $\Kset$ of a differentiable population fixed-latent adjoint Hamiltonian $q_{\mathrm A}^u$, and let $\Tcal(u)$ maximize the quadratic HJB action Hamiltonian $q_{\mathrm{HJB}}^u$ with curvature $H^u\succ0$. Set
\begin{equation}
  d_{\mathrm A}^u:=\Acal(u)-u,
  \qquad
  c_\beta:=\frac{2-\beta}{2},
  \qquad 0<\beta\le1,
  \label{eq:relative-adjoint-definitions}
\end{equation}
and define the adjoint--HJB Hamiltonian-gradient discrepancy at the adjoint-based policy update by
\begin{equation}
  e_{\mathrm A}^u
  :=\nabla_a q_{\mathrm A}^u(\Acal(u))
  -\nabla_a q_{\mathrm{HJB}}^u(\Acal(u)),
  \qquad
  \varepsilon_{\mathrm A}(u):=\norm{e_{\mathrm A}^u}_{(H^u)^{-1}}.
  \label{eq:population-adjoint-score-error}
\end{equation}
Suppose that, pointwise,
\begin{equation}
  \varepsilon_{\mathrm A}(u)
  \le \kappa\norm{d_{\mathrm A}^u}_{H^u}
  \qquad\text{for some }0\le\kappa<c_\beta.
  \label{eq:relative-population-score-condition}
\end{equation}
Then the damped adjoint-based policy update $u_{\mathrm A}^+=u+\beta d_{\mathrm A}^u$ satisfies
\begin{equation}
  q_{\mathrm{HJB}}^u(u_{\mathrm A}^+)-q_{\mathrm{HJB}}^u(u)
  \ge
  \beta(c_\beta-\kappa)
  \norm{\Acal(u)-u}_{H^u}^2.
  \label{eq:relative-adjoint-score-improvement}
\end{equation}
Moreover,
\begin{equation}
  \norm{\Acal(u)-\Tcal(u)}_{H^u}
  \le \kappa\norm{\Acal(u)-u}_{H^u},
  \label{eq:adjoint-markov-map-distance}
\end{equation}
and, since $\kappa<1$,
\begin{equation}
  \frac{1}{1+\kappa}\norm{\Tcal(u)-u}_{H^u}
  \le \norm{\Acal(u)-u}_{H^u}
  \le \frac{1}{1-\kappa}\norm{\Tcal(u)-u}_{H^u}.
  \label{eq:adjoint-markov-residual-equivalence}
\end{equation}
If \cref{ass:performance-difference} holds for $(u,u_{\mathrm A}^+)$ and the condition holds pointwise along the new-policy law, then
\begin{equation}
\begin{split}
  V^{u_{\mathrm A}^+}(t,s)-V^u(t,s)
  \ge{}&
  \frac{\beta(c_\beta-\kappa)}{(1+\kappa)^2}
  \E_{t,s}^{u_{\mathrm A}^+}\left[
  \int_t^T w^u
  \norm{\Tcal(u)-u}_{H^u}^2\dd r
  \right].
  \label{eq:relative-adjoint-value-improvement}
\end{split}
\end{equation}
Consequently, if $0<\underline\beta\le\beta_k\le\overline\beta\le1$, a common $\kappa<c_{\overline\beta}:=(2-\overline\beta)/2$ satisfies \cref{eq:relative-population-score-condition} along the population OL-BPTT iteration, and the values are bounded above, then the weighted HJB policy-improvement residuals are summable.
\end{corollary}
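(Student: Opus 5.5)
The plan is to treat the corollary as the specialization of \cref{thm:approx-value-improvement} and \cref{lem:quadratic-regret} to the population operator. We set $\AcalNum=\Acal$ and $\widehat q^u=q_{\mathrm A}^u$, so that $e_{\mathrm{ham}}^u=e_{\mathrm A}^u$ and $\epsrec=0$, since $\Acal(u)$ is an exact maximizer over $\Kset$. Then \cref{eq:approx-score-gain-sharp} gives
\[
  q_{\mathrm{HJB}}^u(u_{\mathrm A}^+)-q_{\mathrm{HJB}}^u(u)\ge\beta\bigl[c_\beta\norm{d_{\mathrm A}^u}_{H^u}^2-\varepsilon_{\mathrm A}(u)\norm{d_{\mathrm A}^u}_{H^u}\bigr].
\]
Inserting \cref{eq:relative-population-score-condition} immediately yields \cref{eq:relative-adjoint-score-improvement}. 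No Young step is needed, because the relative condition already absorbs the linear term.

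For \cref{eq:adjoint-markov-map-distance} we apply \cref{lem:quadratic-regret} with $a^\star=\Tcal(u)$ and $\widehat a=\Acal(u)$. This gives $\norm{\Acal(u)-\Tcal(u)}_{H^u}\le\varepsilon_{\mathrm A}(u)\le\kappa\norm{\Acal(u)-u}_{H^u}$. The residual equivalence \cref{eq:adjoint-markov-residual-equivalence} then follows from the triangle inequality in the $H^u$-norm, using $\kappa<c_\beta\le1$:
\[
\norm{\Tcal(u)-u}\le\norm{\Acal(u)-u}+\norm{\Acal(u)-\Tcal(u)}\le(1+\kappa)\norm{\Acal(u)-u},
\]
\[
\norm{\Acal(u)-u}\le\norm{\Tcal(u)-u}+\kappa\norm{\Acal(u)-u}.
\]

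For the value statement \cref{eq:relative-adjoint-value-improvement}, we combine the factorization \cref{eq:weighted-quadratic-advantage}, namely $\Adv^u=w^u[q^u(a)-q^u(u)]$, with the performance-difference identity \cref{eq:performance-difference} under the law of $u_{\mathrm A}^+$. We then lower-bound the pointwise gain by $\beta(c_\beta-\kappa)(1+\kappa)^{-2}\norm{\Tcal(u)-u}_{H^u}^2$, using the left inequality of \cref{eq:adjoint-markov-residual-equivalence}. The condition must hold at the points visited by $S^{u_{\mathrm A}^+}$, which is exactly the stated hypothesis.

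For summability, we note that $c_\beta$ is decreasing in $\beta$, so $\kappa<c_{\overline\beta}\le c_{\beta_k}$ for every $k$. The per-step constant is therefore at least $\underline\beta(c_{\overline\beta}-\kappa)(1+\kappa)^{-2}>0$. Telescoping $V^{u^{k+1}}-V^{u^k}$ at the fixed initial point $(0,s_0)$, with values bounded above and nondecreasing, shows that
\[
\sum_k\E^{u^{k+1}}\!\int w^{u^k}\norm{\ResOp(u^k)}_{H^{u^k}}^2\dd r<\infty.
\]
The only delicate point is interpretive: the summed residuals are weighted under the successive new-policy occupation laws. I expect the main obstacle to be making that bookkeeping precise, that is, stating that the summability is along $(0,s_0)$ under the laws of $u^{k+1}$ and ensuring \cref{ass:performance-difference} holds for each pair $(u^k,u^{k+1})$. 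The algebraic steps themselves are routine.
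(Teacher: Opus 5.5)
Your proposal is correct and follows the paper's proof essentially step for step: it specializes the sharp form of \cref{thm:approx-value-improvement} with $\AcalNum=\Acal$, $\epsrec=0$, and $\epsham=\varepsilon_{\mathrm A}$, gets the map distance from \cref{lem:quadratic-regret}, the residual equivalence from the triangle inequality, the value bound from the performance-difference identity, and summability by telescoping with the uniform constant $\underline\beta(c_{\overline\beta}-\kappa)$. The bookkeeping point you flag, that \cref{ass:performance-difference} must hold for each consecutive pair and that the telescoping applies at any fixed start and not only at $(0,s_0)$, is exactly what the paper's one-line ``summing over $k$'' leaves implicit.
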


\begin{proof}
Apply \cref{thm:approx-value-improvement} with $\AcalNum=\Acal$, exact local optimization $\epsrec=0$, and Hamiltonian-gradient error bound $\epsham=\varepsilon_{\mathrm A}$. This gives \cref{eq:relative-adjoint-score-improvement}. The greedy-control stability estimate in \cref{lem:quadratic-regret} gives
$\norm{\Acal(u)-\Tcal(u)}_{H^u}\le\varepsilon_{\mathrm A}(u)$, proving \cref{eq:adjoint-markov-map-distance}. The triangle inequality yields
\[
  \norm{\Tcal(u)-u}_{H^u}
  \le(1+\kappa)\norm{\Acal(u)-u}_{H^u}
\]
and
\[
  (1-\kappa)\norm{\Acal(u)-u}_{H^u}
  \le\norm{\Tcal(u)-u}_{H^u},
\]
which proves \cref{eq:adjoint-markov-residual-equivalence}. Insert the first residual inequality into the performance-difference form of \cref{eq:relative-adjoint-score-improvement} to obtain \cref{eq:relative-adjoint-value-improvement}. Summing over $k$ proves the final statement because
$\beta_k(c_{\beta_k}-\kappa)\ge\underline\beta(c_{\overline\beta}-\kappa)>0$.
\end{proof}
\begin{corollary}[Occupation-measure directional and relative-error transfer]
\label{cor:occupancy-relative-adjoint-step}
Under the setup of \cref{cor:relative-adjoint-step}, fix a starting state $(t,s)$ and a gain $0<\beta\le1$, assume \cref{ass:performance-difference} for $(u,u_{\mathrm A,\beta}^+)$, and write $u_{\mathrm A,\beta}^+=u+\beta d_{\mathrm A}^u$. Along the law of $S^{u_{\mathrm A,\beta}^+}$, define
\begin{align}
  \bigl(D_{\mathrm A}^{u,\beta}(t,s)\bigr)^2
  &:={\E}_{t,s}^{u_{\mathrm A,\beta}^+}\left[
      \int_t^T w^u\norm{\Acal(u)-u}_{H^u}^2\dd r\right],
  \label{eq:occupancy-adjoint-movement}\\
  \bigl(E_{\mathrm A}^{u,\beta}(t,s)\bigr)^2
  &:={\E}_{t,s}^{u_{\mathrm A,\beta}^+}\left[
      \int_t^T w^u\varepsilon_{\mathrm A}(u)^2\dd r\right],
  \label{eq:occupancy-adjoint-defect}\\
  \bigl(R_{\mathrm{HJB}}^{u,\beta}(t,s)\bigr)^2
  &:={\E}_{t,s}^{u_{\mathrm A,\beta}^+}\left[
      \int_t^T w^u\norm{\Tcal(u)-u}_{H^u}^2\dd r\right],
  \label{eq:occupancy-markov-residual}\\
  \bigl(M_{\mathrm A}^{u,\beta}(t,s)\bigr)^2
  &:={\E}_{t,s}^{u_{\mathrm A,\beta}^+}\left[
      \int_t^T w^u\norm{\Acal(u)-\Tcal(u)}_{H^u}^2\dd r\right],
  \label{eq:occupancy-map-distance}\\
  \Xi_{\mathrm A}^{u,\beta}(t,s)
  &:={\E}_{t,s}^{u_{\mathrm A,\beta}^+}\left[
      \int_t^T w^u\inner{e_{\mathrm A}^u}{d_{\mathrm A}^u}\dd r\right].
  \label{eq:occupancy-directional-defect}
\end{align}
All integrands above are evaluated at $(r,S_r^{u_{\mathrm A,\beta}^+})$. Whenever these quantities are finite, the sharp quadratic-Hamiltonian estimate gives
\begin{equation}
  V^{u_{\mathrm A,\beta}^+}(t,s)-V^u(t,s)
  \ge \beta\left[
  c_\beta\bigl(D_{\mathrm A}^{u,\beta}(t,s)\bigr)^2
  -\Xi_{\mathrm A}^{u,\beta}(t,s)\right].
  \label{eq:occupancy-directional-base-bound}
\end{equation}
Consequently, the directional condition
\begin{equation}
  \Xi_{\mathrm A}^{u,\beta}(t,s)
  \le \kappa_{\mathrm{dir}}
  \bigl(D_{\mathrm A}^{u,\beta}(t,s)\bigr)^2,
  \qquad \kappa_{\mathrm{dir}}\in\R,\quad \kappa_{\mathrm{dir}}<c_\beta,
  \label{eq:occupancy-directional-condition}
\end{equation}
implies
\begin{equation}
  V^{u_{\mathrm A,\beta}^+}(t,s)-V^u(t,s)
  \ge \beta(c_\beta-\kappa_{\mathrm{dir}})
  \bigl(D_{\mathrm A}^{u,\beta}(t,s)\bigr)^2.
  \label{eq:occupancy-directional-value-improvement}
\end{equation}
If, more strongly,
\begin{equation}
  E_{\mathrm A}^{u,\beta}(t,s)
  \le \kappa D_{\mathrm A}^{u,\beta}(t,s),
  \qquad 0\le\kappa<c_\beta,
  \label{eq:occupancy-relative-population-score-condition}
\end{equation}
then \cref{eq:occupancy-directional-condition} holds with $\kappa_{\mathrm{dir}}=\kappa$, and
\begin{align}
  V^{u_{\mathrm A,\beta}^+}(t,s)-V^u(t,s)
  &\ge \beta(c_\beta-\kappa)
  \bigl(D_{\mathrm A}^{u,\beta}(t,s)\bigr)^2
  \label{eq:occupancy-relative-value-improvement}\\
  &\ge \frac{\beta(c_\beta-\kappa)}{(1+\kappa)^2}
  \bigl(R_{\mathrm{HJB}}^{u,\beta}(t,s)\bigr)^2.
  \label{eq:occupancy-relative-markov-improvement}
\end{align}
Moreover,
\begin{equation}
  M_{\mathrm A}^{u,\beta}(t,s)
  \le E_{\mathrm A}^{u,\beta}(t,s)
  \le \kappa D_{\mathrm A}^{u,\beta}(t,s),
  \label{eq:occupancy-map-distance-bound}
\end{equation}
and
\begin{equation}
  \frac{1}{1+\kappa}R_{\mathrm{HJB}}^{u,\beta}(t,s)
  \le D_{\mathrm A}^{u,\beta}(t,s)
  \le \frac{1}{1-\kappa}R_{\mathrm{HJB}}^{u,\beta}(t,s).
  \label{eq:occupancy-residual-equivalence}
\end{equation}
\end{corollary}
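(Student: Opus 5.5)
The plan is to rerun the pointwise argument of \cref{thm:approx-value-improvement} with $\AcalNum=\Acal$ and $\epsrec=0$. We keep the directional (inner-product) form \cref{eq:approx-score-gain-directional} rather than its Cauchy--Schwarz relaxation, and integrate against the occupation law of $S^{u_{\mathrm A,\beta}^+}$ through \cref{eq:performance-difference}.

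\textbf{Pointwise gain.} At each $(r,x)$, write $d=d_{\mathrm A}^u$. Exact quadraticity gives $\nabla q_{\mathrm{HJB}}^u(u)=\nabla q_{\mathrm A}^u(\Acal(u))-e_{\mathrm A}^u+H^ud$. The variational inequality at the exact maximizer $\Acal(u)$ of $q_{\mathrm A}^u$ gives $\nabla q_{\mathrm A}^u(\Acal(u))^\top d\ge0$. Together these yield
\[
q_{\mathrm{HJB}}^u(u+\beta d)-q_{\mathrm{HJB}}^u(u)\ge\beta\bigl[c_\beta\norm{d}_{H^u}^2-\inner{e_{\mathrm A}^u}{d}\bigr].
\]

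\textbf{Integration.} By \cref{eq:weighted-quadratic-advantage}, the advantage equals $w^u$ times this gain. Inserting it into \cref{eq:performance-difference} under the law of $u_{\mathrm A,\beta}^+$, and using finiteness of the defined quantities to split the expectation, gives \cref{eq:occupancy-directional-base-bound}. The directional condition \cref{eq:occupancy-directional-condition} then yields \cref{eq:occupancy-directional-value-improvement} immediately.

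\textbf{Relative condition.} Pointwise, $\inner{e_{\mathrm A}^u}{d}\le\varepsilon_{\mathrm A}\norm{d}_{H^u}$ by the dual-norm Cauchy--Schwarz inequality. Applying Cauchy--Schwarz again in $L^2(w^u\,\dd r\otimes\dd\Pp)$ gives $\Xi_{\mathrm A}^{u,\beta}\le E_{\mathrm A}^{u,\beta}D_{\mathrm A}^{u,\beta}\le\kappa (D_{\mathrm A}^{u,\beta})^2$. This establishes the directional condition with $\kappa_{\mathrm{dir}}=\kappa$ and hence \cref{eq:occupancy-relative-value-improvement}.

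\textbf{Comparison with the HJB residual.} \Cref{lem:quadratic-regret} applied pointwise, with $\widehat q=q_{\mathrm A}^u$, gives $\norm{\Acal(u)-\Tcal(u)}_{H^u}\le\varepsilon_{\mathrm A}(u)$. Squaring, weighting and integrating gives $M_{\mathrm A}^{u,\beta}\le E_{\mathrm A}^{u,\beta}$, which proves \cref{eq:occupancy-map-distance-bound}.

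For \cref{eq:occupancy-residual-equivalence}, I would apply Minkowski's inequality in the same weighted $L^2$ space to $\Tcal(u)-u=(\Acal(u)-u)+(\Tcal(u)-\Acal(u))$. This gives $R_{\mathrm{HJB}}\le D_{\mathrm A}+M_{\mathrm A}\le(1+\kappa)D_{\mathrm A}$ and $D_{\mathrm A}\le R_{\mathrm{HJB}}+M_{\mathrm A}\le R_{\mathrm{HJB}}+\kappa D_{\mathrm A}$. The second inequality can be rearranged because $\kappa<c_\beta\le1$. Substituting the lower bound $D_{\mathrm A}\ge R_{\mathrm{HJB}}/(1+\kappa)$ into \cref{eq:occupancy-relative-value-improvement} gives \cref{eq:occupancy-relative-markov-improvement}.

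\textbf{Main obstacle.} The delicate point is that the conditions are only aggregate, not pointwise. The key step is therefore to keep the signed term $\Xi_{\mathrm A}^{u,\beta}$ unrelaxed until after integration, and to pass from the pointwise to the aggregate comparison by Cauchy--Schwarz and Minkowski in the occupation-weighted $L^2$ norm. The finiteness hypotheses justify the linearity of the expectations, and \cref{ass:performance-difference} justifies the performance-difference identity.
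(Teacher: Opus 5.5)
Your proposal is correct and follows the paper's proof essentially step for step. It takes the pointwise directional gain of \cref{thm:approx-value-improvement} with $\epsrec=0$ and integrates it through the performance-difference identity, then uses weighted Cauchy--Schwarz for $\Xi_{\mathrm A}^{u,\beta}\le E_{\mathrm A}^{u,\beta}D_{\mathrm A}^{u,\beta}$, pointwise greedy-control stability from \cref{lem:quadratic-regret} for $M_{\mathrm A}^{u,\beta}\le E_{\mathrm A}^{u,\beta}$, and Minkowski for the residual equivalence, with your explicit note that $\kappa<c_\beta<1$ justifies the final rearrangement being a detail the paper leaves implicit.
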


\begin{proof}
Applying \cref{eq:approx-score-gain-directional} with $\AcalNum=\Acal$ and $\epsrec=0$, and integrating along the updated-policy law, gives \cref{eq:occupancy-directional-base-bound}; \cref{eq:occupancy-directional-condition} then gives \cref{eq:occupancy-directional-value-improvement}. Cauchy--Schwarz in the weighted path--time space gives
$\Xi_{\mathrm A}^{u,\beta}\le E_{\mathrm A}^{u,\beta}D_{\mathrm A}^{u,\beta}$, so the norm-relative condition implies the directional one. Pointwise greedy-control stability gives
$\norm{\Acal(u)-\Tcal(u)}_{H^u}\le\varepsilon_{\mathrm A}(u)$; integrating yields \cref{eq:occupancy-map-distance-bound}. Minkowski's inequality gives
$R_{\mathrm{HJB}}^{u,\beta}\le D_{\mathrm A}^{u,\beta}+M_{\mathrm A}^{u,\beta}$ and
$D_{\mathrm A}^{u,\beta}\le R_{\mathrm{HJB}}^{u,\beta}+M_{\mathrm A}^{u,\beta}$, which imply \cref{eq:occupancy-residual-equivalence} and then \cref{eq:occupancy-relative-markov-improvement}.
\end{proof}

\begin{remark}[Directional versus norm-relative conditions]
\label{rem:relative-adjoint-scope}
The signed directional quantity in \cref{eq:occupancy-directional-defect} is the error component that enters the value increment. The norm-relative condition \cref{eq:occupancy-relative-population-score-condition} is stronger: besides directional improvement, it controls the distance between $\Acal$ and $\Tcal$ and therefore rules out asymptotic adjoint stationarity without HJB stationarity. The occupation formulation is intrinsic rather than cosmetic. The fixed-latent first-adjoint defect aggregates future residuals, so a current point may have zero policy residual while retaining a nonzero defect generated later along the path; a general pointwise defect-to-current-residual ratio is therefore not expected. The regular-tube envelope identity motivates residual-scaled decay but does not furnish an iterate-uniform global constant, especially across switching strata. Damping changes the admissible directional threshold $c_\beta$ but does not reduce the pre-damping population consistency error.
\end{remark}
\begin{remark}[Gain selected from the a posteriori lower bound]
\label{rem:certificate-gain}
At a fixed state and for a represented candidate held fixed as $\beta$ varies, set
\begin{equation}
  D_{\mathrm{step}}=\norm{d^u}_{H^u},
  \qquad E_{\mathrm{ham}}=\epsham(u),
  \qquad E_{\mathrm{rec}}=\epsrec(u).
\end{equation}
For $D_{\mathrm{step}}>0$, the sharp lower bound in \cref{eq:approx-score-gain-sharp} is the concave quadratic
\begin{equation}
  \mathcal C_{\mathrm{cert}}(\beta)
  =\beta\left[\frac{2-\beta}{2}D_{\mathrm{step}}^2-E_{\mathrm{ham}}D_{\mathrm{step}}-E_{\mathrm{rec}}\right],
\end{equation}
whose maximizer over $[0,1]$ is
\begin{equation}
  \beta_{\mathrm{cert}}^\star
  =\Pi_{[0,1]}\left(1-\frac{E_{\mathrm{ham}}}{D_{\mathrm{step}}}-\frac{E_{\mathrm{rec}}}{D_{\mathrm{step}}^2}\right).
  \label{eq:certificate-gain}
\end{equation}
Here $\Pi_{[0,1]}(x):=\min\{1,\max\{0,x\}\}$ is scalar clipping. Thus the exact local model $(E_{\mathrm{ham}}=E_{\mathrm{rec}}=0)$ favors the full step, whereas Hamiltonian approximation and local optimization error can make an interior gain optimal for the certificate. This is a pointwise conditional statement: the globally best gain may differ because the state law, field estimator, and error terms can themselves depend on $\beta$. In particular, the formula explains the possibility of an interior deterministic line-profile optimum but does not by itself predict the numerical value $\beta=1/2$.
\end{remark}

\section{Constrained dynamic portfolio choice}
\label{sec:portfolio}

\subsection{Adjoint and HJB portfolio Hamiltonians}
\label{sec:portfolio-decoder}

Let $Z_t\in\R^{d_z}$ be observable investment-opportunity factors and $X_t$ wealth. A generic multifactor portfolio model is
\begin{align}
  \dd Z_t&=K_Z(\bar z-Z_t)\dd t+\Lambda\dd B_t^Z,\\
  \frac{\dd X_t}{X_t}
  &=\bigl(r_f+u_t^\top\mu(Z_t)\bigr)\dd t
  +u_t^\top\Sigma_R\dd B_t^R,
  \label{eq:multifactor-portfolio}
\end{align}
where $K_Z$ is the factor mean-reversion matrix, $\bar z$ is the long-run factor mean, $\Lambda$ is the factor-volatility matrix, $r_f$ is the risk-free rate, $\mu(z)$ is the vector of excess returns, $\Sigma_R$ is the return-volatility loading, and $u_t\in\R^{d_u}$ is the risky-weight vector. Write
\begin{equation}
  \Sigma:=\Sigma_R\Sigma_R^\top\succ0
  \label{eq:return-covariance}
\end{equation}
for the instantaneous return covariance. If $\rho_{RZ}$ is the instantaneous Brownian cross-correlation matrix, so that
$\dd\langle B^R,B^Z\rangle_t=\rho_{RZ}\dd t$, define
\begin{equation}
  C:=\Sigma_R\rho_{RZ}\Lambda^\top\in\R^{d_u\times d_z}
  \label{eq:return-factor-covariance}
\end{equation}
for the return--factor cross-covariance.

Continuous-time portfolio choice under convex, no-short-sale, borrowing, and consumption constraints has classical duality and HJB foundations \citep{CvitanicKaratzas1992,Zariphopoulou1994,XuShreve1992b}. Simulation-based dynamic portfolio methods provide a complementary route for predictable returns, learning, and constraints \citep{BrandtEtAl2005}. The present focus is the continuous-time adjoint field needed for a pointwise constrained policy update and its self-consistent re-evaluation under the deployed feedback.

The generic adjoint-based policy update follows directly from the generalized Hamiltonian. Partition the second adjoint according to $S=(X,Z)$ and let $P_{ZX}^u$ be its factor--wealth block. Up to terms independent of a candidate risky weight $a$, the risky-weight part of the generalized Hamiltonian is
\begin{equation}
\begin{split}
  \Hgen_{\mathrm{port}}^u(t,x,z,a)
  ={}&x\Bigl[
  \lambda_X^u\mu(z)
  +\Sigma_R(\zeta_X^u)^\top
  +C P_{ZX}^u
  \Bigr]^\top a\\
  &+\frac12x^2P_{XX}^u\,a^\top\Sigma a.
  \label{eq:portfolio-adjoint-score}
\end{split}
\end{equation}
Thus, when $P_{XX}^u<0$, the risky-weight block is a strictly concave quadratic program conditional on the wealth-relevant adjoint tuple
\begin{equation}
  \vartheta_{\mathrm{port}}^u
  =(\lambda_X^u,\zeta_X^u,P_{XX}^u,P_{ZX}^u).
  \label{eq:portfolio-adjoint-tuple}
\end{equation}
No value-Hessian identification is required to form this QP. When $\lambda_X^u>0$, division by the positive factor $x\lambda_X^u$ gives a normalized adjoint Hamiltonian that can be compared directly with the HJB action Hamiltonian below. At the current optimal action, Equation~\eqref{eq:smooth-adjoint-identification} makes the shifted and second-adjoint terms reproduce the HJB action gradient, without requiring $P^\star=\D^2V$.

The exact policy-iteration theorem below uses the fixed-policy HJB action Hamiltonian and its greedy policy-improvement operator. For CRRA utility,
\begin{equation}
  V^u(t,x,z)=\frac{x^{1-\gamma}}{1-\gamma}F^u(t,z),
  \qquad \gamma>1,
  \label{eq:portfolio-crra-factorization}
\end{equation}
and the two ratios entering policy improvement are
\begin{equation}
  \frac{xV_{xx}^u}{V_x^u}=-\gamma,
  \qquad
  \frac{V_{xz}^u}{V_x^u}=\D_z\log F^u.
  \label{eq:portfolio-crra-ratios}
\end{equation}
Hence the wealth curvature is known analytically, while the unknown policy-dependent object is the normalized factor-adjoint field
\begin{equation}
  R^u(t,z):=\D_z\log F^u(t,z)\in\R^{d_z}.
  \label{eq:normalized-adjoint-field}
\end{equation}
After removing the positive multiplier $w^u=xV_x^u$, the normalized HJB action Hamiltonian is
\begin{equation}
  q^u(t,z,a)
  =c_0^u(t,z)+a^\top m^u(t,z)-\frac\gamma2a^\top\Sigma a,
  \qquad
  m^u(t,z)=\mu(z)+C R^u(t,z),
  \qquad H^u=\gamma\Sigma.
  \label{eq:portfolio-score}
\end{equation}
For a positive-definite matrix $M$, let
\begin{equation}
  \Pi_{\mathcal C}^{M}(x)
  :=\argmin_{a\in\mathcal C}\frac12\norm{a-x}_{M}^2
  \label{eq:metric-projection}
\end{equation}
be the $M$-metric projection onto a closed convex set $\mathcal C$. The exact HJB policy-improvement operator is
\begin{equation}
  \Tcal(u)(t,z)
  =\Pi_{\U(t,z)}^{\Sigma}
  \left(
  \frac1\gamma\Sigma^{-1}\bigl[\mu(z)+C R^u(t,z)\bigr]
  \right).
  \label{eq:portfolio-recovery}
\end{equation}

The generic adjoint Hamiltonian in \cref{eq:portfolio-adjoint-score} is useful for the broader controlled-diffusion interpretation. For the affine-return CRRA iteration, write $\mu(z)=\mu_c+B_\mu z$, with intercept $\mu_c$ and loading matrix $B_\mu$, and define the population operator from the reduced fixed-policy equation. Let $\chi=1-\gamma$ and, for a fixed feedback $u$, write
\begin{equation}
  \widetilde b^u(t,z)
  :=K_Z(\bar z-z)+\chi C^\top u(t,z),\qquad
  c^u(t,z):=\chi\left(r_f+u^\top\mu(z)-\frac\gamma2u^\top\Sigma u\right).
  \label{eq:reduced-drift-potential}
\end{equation}
Here $c^u$ is the Feynman--Kac potential, distinct from the action-independent Hamiltonian term $c_0^u$. Let $Q_u$ denote the factor law with drift $\widetilde b^u$ and diffusion $\Lambda$. Then
\[
  F^u(t,z)=\E_{t,z}^{Q_u}\exp\left\{\int_t^T c^u(r,Z_r)\dd r\right\}.
\]
Fixed-latent differentiation holds the deployed action $u(t,Z_t)$ fixed when differentiating this representation. In the affine model the corresponding factor tangent is deterministic,
$J_{\mathrm{FL},r}^u=e^{-K_Z(r-t)}$, and
\begin{equation}
  G_{\mathrm{OL}}^u(t,z)
  :=\E_{t,z}^{Q_u}\left[
  e^{\int_t^T c^u(q,Z_q)\dd q}
  \int_t^T (J_{\mathrm{FL},r}^u)^\top
  \partial_z c^u(r,Z_r;u\ \mathrm{fixed})\dd r
  \right],
  \qquad
  R_{\mathrm{OL}}^u:=\frac{G_{\mathrm{OL}}^u}{F^u}.
  \label{eq:crra-reduced-ol-target}
\end{equation}
In this reduced formulation, $G_{\mathrm{OL}}^u\in\R^{d_z}$ is the population reverse-mode derivative of the reduced continuation payoff with all future deployed action values detached. Since
$\partial_z c^u(r,z;u\ \mathrm{fixed})=\chi B_\mu^\top u(r,z)$ and the diffusion is state independent, \cref{eq:crra-reduced-ol-target} is a complete population definition of the field estimated by reduced OL-BPTT.

With action-independent term $c_{\mathrm A}^u$, the population normalized adjoint Hamiltonian and policy-improvement operator are
\begin{align}
  q_{\mathrm A}^u(t,z,a)
  &=c_{\mathrm A}^u(t,z)+a^\top\bigl[\mu(z)+C R_{\mathrm{OL}}^u(t,z)\bigr]
    -\frac\gamma2a^\top\Sigma a,
  \label{eq:portfolio-olbptt-score}\\
  \Acal(u)(t,z)
  &=\Pi_{\U(t,z)}^{\Sigma}
  \left(
  \frac1\gamma\Sigma^{-1}\bigl[\mu(z)+C R_{\mathrm{OL}}^u(t,z)\bigr]
  \right).
  \label{eq:portfolio-olbptt-recovery}
\end{align}
Hence the reduced Hamiltonian-gradient discrepancy is exactly
\begin{equation}
  e_{\mathrm{cons}}^u(t,z,a)
  =C\bigl(R_{\mathrm{OL}}^u(t,z)-R^u(t,z)\bigr),
  \label{eq:portfolio-reduced-compatibility}
\end{equation}
independently of the trial action. The curvature $\gamma\Sigma$ and the feasible set are common to $\Acal$ and $\Tcal$. Thus \cref{eq:portfolio-reduced-compatibility} is the entire population adjoint--HJB discrepancy in this reduced CRRA formulation.

For completeness, the multiplicative-potential analogue of the fixed-latent envelope identity can be written directly. Let $p_F^u:=\D_zF^u$ and define the scalar factor generator
\begin{equation}
  \Lcal_Z^u\phi
  :=\widetilde b^u\!\cdot\D_z\phi
  +\frac12\tr(\Lambda\Lambda^\top\D_{zz}^2\phi).
  \label{eq:crra-reduced-factor-generator}
\end{equation}
Let $\Lcal_Z^u$ act componentwise on vector fields and set
\begin{equation}
  \mathscr M_{\mathrm{FL}}^u h
  :=\partial_t h+\Lcal_Z^u h-K_Z^\top h+c^u h.
  \label{eq:crra-reduced-vector-operator}
\end{equation}
This is the vector backward operator obtained from \cref{eq:crra-global-fixed-policy} by differentiating $\widetilde b^u$ and $c^u$ while holding the deployed action fixed. Then
\begin{equation}
  \mathscr M_{\mathrm{FL}}^u\bigl(p_F^u-G_{\mathrm{OL}}^u\bigr)
  +r_{\mathrm{env,red}}^u=0,
  \qquad
  \bigl(p_F^u-G_{\mathrm{OL}}^u\bigr)(T,\cdot)=0,
  \label{eq:crra-reduced-defect-pde}
\end{equation}
with source
\begin{equation}
  r_{\mathrm{env,red}}^u
  =\chi F^u(\D_zu)^\top
  \bigl[\mu+C R^u-\gamma\Sigma u\bigr].
  \label{eq:crra-reduced-envelope-source}
\end{equation}
On a regular interior or fixed-face cell, KKT normal--tangent cancellation gives the contracted identity
\begin{equation}
  (\D_zu)^\top\bigl[\mu+C R^u-\gamma\Sigma u\bigr]
  =(\D_zu)^\top\gamma\Sigma\bigl[\Tcal(u)-u\bigr].
  \label{eq:crra-reduced-contracted-residual}
\end{equation}
Consequently,
\begin{equation}
  \norm{r_{\mathrm{env,red}}^u}
  \le |\chi|F^u
  \norm{(\D_zu)^\top(\gamma\Sigma)^{1/2}}_{\mathrm{op}}
  \norm{\Tcal(u)-u}_{\gamma\Sigma}.
  \label{eq:crra-reduced-envelope-bound}
\end{equation}
The vector Feynman--Kac representation for $\mathscr M_{\mathrm{FL}}^u$ therefore yields the reduced counterpart of \cref{eq:olbptt-residual-scaled-bound}. In particular, on a regular cell,
$\Tcal(u)=u$ implies $G_{\mathrm{OL}}^u=\D_zF^u$, hence $R_{\mathrm{OL}}^u=R^u$ and $\Acal(u)=u$. This fixed-stratum conclusion does not extend across switching surfaces; the directional occupation and asymptotic stationarity-compatibility conditions used below remain separate global hypotheses, with the norm-relative condition serving as a stronger sufficient route.

This reduction changes the represented statistic, not the differentiation convention. It does not identify the PMP second adjoint with the value Hessian or impose a zero shifted martingale input. Rather, homotheticity makes the structural curvature known and permits a problem with fifty assets and three factors to be represented by a three-dimensional state-dependent field followed by a fifty-dimensional QP.

\subsection{Global convergence for constrained CRRA portfolios}
\label{sec:crra-global-convergence}

Let $\chi:=1-\gamma<0$, suppose $\mu(z)=\mu_c+B_\mu z$, and let the admissible portfolio set be a nonempty compact convex set $\PortSet\subset\R^{d_u}$ independent of $(t,z)$. For a fixed $\PortSet$-valued Markov policy $u$, the reduced factor solves
\begin{equation}
\begin{split}
  0={}&\partial_tF^u+
  \bigl[K_Z(\bar z-z)+\chi C^\top u\bigr]^\top\D_zF^u
  +\frac12\tr\!\left(\Lambda\Lambda^\top\D^2_{zz}F^u\right)\\
  &+\chi\left(r_f+u^\top\mu(z)-\frac\gamma2u^\top\Sigma u\right)F^u,
  \qquad F^u(T,z)=1.
  \label{eq:crra-global-fixed-policy}
\end{split}
\end{equation}
For a positive factor $F$, define
\begin{equation}
  \mathcal G[F](t,z)
  :=\Pi_{\PortSet}^\Sigma\!\left(
  \frac1\gamma\Sigma^{-1}\bigl[\mu(z)+C\D_z\log F(t,z)\bigr]
  \right),
  \label{eq:crra-global-recovery}
\end{equation}
so that $\Tcal(u)=\mathcal G[F^u]$. The reduced HJB equation is
\begin{equation}
\begin{split}
  0={}&\partial_tF+K_Z(\bar z-z)^\top\D_zF
  +\frac12\tr\!\left(\Lambda\Lambda^\top\D^2_{zz}F\right)\\
  &+\chi\sup_{a\in\PortSet}\left\{
  \left(r_f+a^\top\mu(z)-\frac\gamma2a^\top\Sigma a\right)F
  +a^\top C\D_zF
  \right\},
  \qquad F(T,z)=1.
  \label{eq:crra-global-hjb}
\end{split}
\end{equation}
The positive multiplier in the performance-difference identity is
\begin{equation}
  w^u(t,x,z)=x^\chi F^u(t,z).
  \label{eq:crra-positive-multiplier}
\end{equation}

For $\eta>0$, write
\begin{equation}
  \mathcal C_\eta:=\left\{f:\ \sup_{(t,z)}e^{-\eta\langle z\rangle}|f(t,z)|<\infty\right\},
  \qquad \langle z\rangle:=(1+|z|^2)^{1/2}.
  \label{eq:crra-weighted-class}
\end{equation}
Set
\begin{equation}
  g(z,a):=r_f+a^\top\mu(z)-\frac\gamma2a^\top\Sigma a,
  \qquad
  \Hgen_{\mathrm B}(z,F,q):=\chi\sup_{a\in\PortSet}\{g(z,a)F+a^\top Cq\}.
  \label{eq:bellman-hamiltonian}
\end{equation}
Compactness of $\PortSet$ and affinity of $\mu$ give constants $C_0,C_1,L_q<\infty$ such that
\begin{equation}
  \sup_{a\in\PortSet}|g(z,a)|\le C_0+C_1|z|,
  \qquad
  |\Hgen_{\mathrm B}(z,F,q_1)-\Hgen_{\mathrm B}(z,F,q_2)|\le L_q|q_1-q_2|.
  \label{eq:hamiltonian-growth}
\end{equation}
For $p>d_z+2$ and $\eta_0>0$, let $\mathcal S_{\eta_0}$ be the positive functions in
$C([0,T]\times\R^{d_z})\cap W_{p,\mathrm{loc}}^{2,1}([0,T)\times\R^{d_z})\cap\mathcal C_{\eta_0}$.

\begin{assumption}[CRRA coefficients and warm start]
\label{ass:crra-global}
The portfolio set $\PortSet$ is compact and convex, $K_Z$ is positive stable, $\Lambda\Lambda^\top\succ0$, and $\mu$ is affine. The warm start $u^0$ is a $\PortSet$-valued Borel Markov feedback and, for some $\alpha_0\in(0,1)$, has a local parabolic $C^{\alpha_0/2,\alpha_0}$ modulus on every compact cylinder.
\end{assumption}

For $0<\tau<T$ and $R>0$, set $K_{\tau,R}:=[0,T-\tau]\times\{z\in\R^{d_z}:|z|\le R\}$.

\begin{proposition}[Policy-uniform regularity, invariant iteration class, and short-time occupation]
\label{prop:crra-fixed-policy-regularity}
Under \cref{ass:crra-global}, let $u:[0,T]\times\R^{d_z}\to\PortSet$ be any bounded Borel Markov feedback. The Feynman--Kac functional associated with \cref{eq:crra-global-fixed-policy} is the unique positive $W_{p,\mathrm{loc}}^{2,1}$ strong solution in an exponential weighted class. For every compact cylinder $K_{\tau,R}$ there are policy-independent constants such that, on the buffered cylinder $K_{\tau/2,R+1}$,
\begin{equation}
  0<m_{\tau,R}\le F^u\le M_{\tau,R}<\infty,
  \qquad
  \norm{F^u}_{W_p^{2,1}(K_{\tau/2,R+1})}
  +\norm{G_{\mathrm{OL}}^u}_{W_p^{2,1}(K_{\tau/2,R+1})}
  \le C_{\tau,R},
  \label{eq:crra-policy-uniform-Wp}
\end{equation}
for the fixed $p>d_z+2$ above. Hence, for every
$0<\bar\alpha<1-(d_z+2)/p$, the maps $\mathcal G[F^u]$ and $\Acal(u)$ have a common policy-uniform local
$C^{\bar\alpha/2,\bar\alpha}$ modulus on compact cylinders. Fix one such $\bar\alpha$ and set $\alpha_\star:=\min\{\alpha_0,\bar\alpha\}$. The exact and population damped updates generated from $u^0$ therefore remain in an invariant class $\mathfrak U$ with a common local $C^{\alpha_\star/2,\alpha_\star}$ modulus. For every successive iterate pair $(u,v)$ produced by either recursion, the performance-difference identity \cref{eq:performance-difference} is valid.

Moreover, for every spatial radius $\varrho\in(0,1]$ there is
$h_0=h_{\tau,R}(\varrho)\in(0,\tau/2]$ such that, for every $h\in(0,h_0]$, every $(t,z)\in K_{\tau,R}$, every bounded Borel Markov evaluation feedback $u$ taking values in $\PortSet$, and every progressively measurable $\PortSet$-valued deployment control $v$,
\begin{equation}
  \E_{t,1,z}^{v}\left[
  \int_t^{t+h}w^u(s,S_s^v)
  \mathbf 1_{\{|Z_s^v-z|\le\varrho\}}\dd s
  \right]
  \ge c_{\tau,R}(h):=\frac12h\,m_{\tau,R}\,2^\chi>0.
  \label{eq:crra-scale-coverage}
\end{equation}
\end{proposition}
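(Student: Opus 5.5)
The plan is to treat \cref{eq:crra-global-fixed-policy} as a linear parabolic equation with merely bounded measurable first-order coefficients. We get existence and uniqueness from the Feynman--Kac representation together with strong-solution theory. All bounds come from quantities that depend on $u$ only through $\sup_{a\in\PortSet}|a|$. For a $\PortSet$-valued Borel feedback $u$:
\begin{itemize}
\item the drift $\widetilde b^u=K_Z(\bar z-z)+\chi C^\top u$ is an Ornstein--Uhlenbeck drift plus a perturbation bounded by $|\chi|\,\norm{C}\sup_{\PortSet}|a|$;
\item the diffusion $\Lambda$ is constant and nondegenerate;
\item the potential satisfies $|c^u|\le|\chi|(C_0+C_1|z|)$ by \cref{eq:hamiltonian-growth}.
\end{itemize}
Since $\Lambda\Lambda^\top\succ0$, the SDE with drift $\widetilde b^u$ has a unique weak solution (Girsanov/Zvonkin), and Krylov's estimates apply uniformly in $u$. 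Exponential moments $\sup_{t\le r\le T}\E\exp(\eta|Z_r|)\le Ce^{\eta|z|}$ hold uniformly in $u$ by comparison with the OU process. They give $F^u\in\mathcal C_{\eta_0}$ and a policy-uniform upper bound $M_{\tau,R}$. Because $\chi<0$, Jensen's inequality gives $F^u\ge\exp\{-|\chi|\E\int_t^T(C_0+C_1|Z_r|)\dd r\}$, which is policy-uniformly positive on $K_{\tau/2,R+1}$. This yields $m_{\tau,R}$.

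For the Sobolev bound, I would mollify $u$ and solve the classical equation. I would then apply the interior $W^{2,1}_p$ Calder\'on--Zygmund estimate (Ladyzhenskaya--Solonnikov--Uraltseva, Krylov) on $K_{\tau/2,R+1}\Subset K_{\tau/4,R+2}$. Its constant depends only on $\Lambda$, $\norm{\widetilde b^u}_{L^\infty}$ and $\norm{c^u}_{L^\infty}$ on the larger cylinder, together with the $L^\infty$ bound $M$. Passing to the limit using stability of Feynman--Kac under a.e.\ convergence of the drift identifies the limit with $F^u$. Uniqueness in the weighted class follows from the It\^o--Krylov formula for $W^{2,1}_{p,\mathrm{loc}}$ functions ($p>d_z+2$), with localization and the exponential moments.

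For $G_{\mathrm{OL}}^u$ I would first verify that it solves the vector linear system
\[
\mathscr M_{\mathrm{FL}}^uG+\chi B_\mu^\top u\,F^u=0,\qquad G(T,\cdot)=0.
\]
This comes from the Markov property applied to \cref{eq:crra-reduced-ol-target}, by writing the inner time integral as a running source weighted by $e^{\int c}$. The same exponential-moment and interior estimates apply, with the bounded source $\chi B_\mu^\top uF^u$, and give \cref{eq:crra-policy-uniform-Wp}. Sobolev embedding for $p>d_z+2$ places $F^u$, $\D_zF^u$ and $G_{\mathrm{OL}}^u$ in $C^{\bar\alpha/2,\bar\alpha}$ with policy-uniform norms. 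Together with $F^u\ge m$, this makes $\D_z\log F^u$ and $R_{\mathrm{OL}}^u=G_{\mathrm{OL}}^u/F^u$ uniformly H\"older. Since $\Pi^\Sigma_{\PortSet}$ is $1$-Lipschitz in $\norm{\cdot}_\Sigma$ and $\mu$ is affine, $\mathcal G[F^u]$ and $\Acal(u)$ inherit a common local modulus.

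A convex combination of two maps with a common $C^{\alpha_\star/2,\alpha_\star}$ modulus has the same modulus, and convexity of $\PortSet$ keeps it $\PortSet$-valued. Induction from \cref{ass:crra-global} then gives the invariant class $\mathfrak U$. The performance-difference identity \cref{eq:performance-difference} for successive pairs follows from the It\^o--Krylov formula applied to $V^u=x^\chi F^u/\chi$ under the law of $v$, with localization. The required moments $\E\sup_r(X_r^v)^{q\chi}$ and $\E\sup_r e^{\eta|Z_r|}$ are finite because $v$ is bounded, so wealth is a stochastic exponential with bounded coefficients.

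For the occupation bound \cref{eq:crra-scale-coverage}, start at $x=1$ with $h\le\tau/2$, so that $(s,Z_s)\in K_{\tau/2,R+1}$ whenever $|Z_s-z|\le\varrho\le1$. On the event
\[
A_h=\Bigl\{\sup_{t\le s\le t+h}|Z^v_s-z|\le\varrho,\ \sup_{t\le s\le t+h}X^v_s\le2\Bigr\}
\]
we have $w^u\ge m_{\tau,R}2^\chi$, because $\chi<0$. It therefore suffices to choose $h_0$ with $\Pp(A_h)\ge1/2$ uniformly in $(t,z)\in K_{\tau,R}$ and in $v$. On $\{|Z-z|\le1\}$ the drifts of $Z$ and of $\log X$ are bounded by constants depending on $R$ and $\PortSet$, and the noise coefficients are bounded. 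Chebyshev's inequality plus BDG, applied to the martingale parts, with a stopping-time argument for the drift, bound $\Pp(A_h^c)$ by $C(h+h^{1/2})/\min(\varrho,\log2)$, which is at most $1/2$ for small $h$.

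The main obstacle is the policy-uniform interior $W^{2,1}_p$ bound and the identification of the strong solution for merely Borel feedback, while keeping all constants free of any regularity of $u$. The approximation/stability step, that is, convergence of $F^{u_n}$ to $F^u$ when the drift converges only a.e., needs Krylov's estimate, and I expect it to require the most care.
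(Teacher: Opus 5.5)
Your proposal is correct. Its skeleton matches the paper's:
\begin{itemize}
\item Girsanov from the OU reference law, with variation of constants, for policy-uniform exponential moments;
\item Jensen for the lower envelope;
\item localized $W^{2,1}_p$ estimates with bounded lower-order coefficients;
\item the Markov-property rewriting of $G_{\mathrm{OL}}^u$ as a vector Feynman--Kac solution with source $\chi F^uB_\mu^\top u$;
\item compact embedding plus Lipschitz metric projection, and convex-damping invariance;
\item the stopped BDG/Chebyshev occupation argument, which gives exactly $\frac12hm_{\tau,R}2^\chi$.
\end{itemize}

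The differences lie in three sub-steps.

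\emph{Existence.} You mollify $u$ and pass to a weak $W^{2,1}_p$ limit. The paper applies Kim--Krylov solvability for bounded measurable lower-order coefficients directly. It then identifies the stopped strong solutions with the Feynman--Kac expectation by It\^o--Krylov, so it needs no approximation step.

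The stability step you flag as the main obstacle is easier than you expect. After Girsanov to the Gaussian OU reference measure, whose marginals have densities, Lebesgue-a.e.\ convergence $u_n\to u$ becomes $\dd r\otimes\dd\Pp^0$-a.e.\ convergence along paths. Bounded kernels then give $L^1$ convergence of the Girsanov densities. Alternatively, the policy-uniform terminal attachment \eqref{eq:terminal-attachment} makes the limit continuous at $T$, and your own It\^o--Krylov uniqueness identifies it.

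\emph{Uniqueness.} You use probabilistic verification. The paper uses the exponential Lyapunov barrier with the ABP--Krylov maximum principle. That analytic argument is reused, after selector linearization, for the nonlinear HJB comparison in \cref{prop:crra-analytic-closure}.

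\emph{Performance-difference identity.} You apply It\^o--Krylov to $W^{2,1}_{p,\mathrm{loc}}$ value functions. This works for every bounded Borel reference policy, because the factor diffusion is action independent and so $\D^2_{zz}F^u$ cancels from $\Adv^u$. The paper instead restricts to $u\in\mathfrak U$ and uses local Schauder theory. That yields the literal $C^{1,2}$ regularity required by \cref{ass:performance-difference}.

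One small correction: wealth is not a stochastic exponential with bounded coefficients, since its drift $v^\top\mu(Z)$ grows linearly in $Z$. The negative wealth moments follow instead from the bounded volatility together with exponential moments of $\sup|Z|$, via H\"older.
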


\begin{proof}
For a bounded Borel Markov feedback $u$, the $Q_u$-factor dynamics are an Ornstein--Uhlenbeck (OU) diffusion with a bounded drift perturbation. Since $\Lambda\Lambda^\top\succ0$, the kernel
$\theta_u(t,z):=\Lambda^\top(\Lambda\Lambda^\top)^{-1}\chi C^\top u(t,z)$ is bounded. Girsanov transformation gives a unique weak law $Q_u$ from the Gaussian OU reference law \citep{KaratzasShreve1991}, and variation of constants gives policy-uniform exponential moments. With
$A_u:=\int_t^Tc^u(r,Z_r)\dd r$, the growth bound \cref{eq:hamiltonian-growth}, Jensen's inequality, and the exponential moments yield common constants $C,\eta_0>0$ such that
\begin{equation}
  C^{-1}e^{-\eta_0\langle z\rangle}
  \le F^u(t,z)\le Ce^{\eta_0\langle z\rangle}
  \qquad\text{for all bounded Borel Markov }u.
  \label{eq:crra-two-sided-exponential-envelope}
\end{equation}
Thus $F^u$ is bounded above and away from zero on compact cylinders, uniformly in $u$. Local $W_p^{2,1}$ solvability with a constant uniformly elliptic principal part and bounded measurable lower-order coefficients applies after truncation and localization \citep{KimKrylov2007}. It\^o--Krylov identifies the stopped strong solutions with their Feynman--Kac expectations; the exponential moments remove the stopping, and the local estimates give the first bound in \cref{eq:crra-policy-uniform-Wp}. The linear Lyapunov-barrier comparison in Appendix~\ref{app:bellman-identification} gives weighted uniqueness \citep{Krylov1987,Friedman1964,FlemingSoner2006,Lieberman1996}.

The reduced fixed-latent field solves the linear vector equation
\begin{equation}
\begin{split}
  0={}&\partial_tG_{\mathrm{OL}}^u+\Lcal_Z^uG_{\mathrm{OL}}^u
  -K_Z^\top G_{\mathrm{OL}}^u+c^uG_{\mathrm{OL}}^u
  +\chi F^uB_\mu^\top u,\\
  G_{\mathrm{OL}}^u(T,\cdot)={}&0.
\end{split}
\label{eq:crra-reduced-ol-pde}
\end{equation}
Its coefficients and source are uniformly bounded on buffered compact cylinders, so the same interior estimate gives the second bound in \cref{eq:crra-policy-uniform-Wp}. The compact parabolic embedding
\begin{equation}
  W_p^{2,1}(K_{\tau/2,R+1})
  \Subset C^{(1+\bar\alpha)/2,1+\bar\alpha}(K_{\tau,R}),
  \qquad 0<\bar\alpha<1-\frac{d_z+2}{p},
  \label{eq:crra-compact-embedding}
\end{equation}
controls $\D_zF^u$ and $G_{\mathrm{OL}}^u$ in a common local H\"older class. Since $F^u\ge m_{\tau,R}$ and metric projection onto $\PortSet$ is Lipschitz, both $\mathcal G[F^u]$ and $\Acal(u)$ have policy-independent compactwise moduli. For each $K_{\tau,R}$, let $L_{\tau,R}^\star$ be the maximum of the resulting $C^{\alpha_\star/2,\alpha_\star}$ bound and the corresponding seminorm of $u^0$, and let $\mathfrak U$ be the class of $\PortSet$-valued feedbacks whose seminorm on every $K_{\tau,R}$ is at most $L_{\tau,R}^\star$. Convex damping preserves these bounds, proving invariance of $\mathfrak U$ for both recursions.

For $u\in\mathfrak U$, the lower-order coefficients are locally H\"older. Local Schauder regularity upgrades $F^u$ to $C^{1+\alpha_\star/2,2+\alpha_\star}$ away from $T$. Applying It\^o's formula after localization, and using bounded controls together with the exponential factor moments and negative wealth moments, proves \cref{eq:performance-difference} for successive iterate pairs.

For the occupation estimate, fix $(t,z)\in K_{\tau,R}$, start from $X_t^v=1$, set $L_s^X:=\log X_s^v$, and let
\[
  \tau_{\varrho}:=\inf\{s\ge t:\ |Z_s^v-z|>\varrho\}.
\]
On $[t,\tau_{\varrho}]$, the factor remains in $\{|y|\le R+1\}$. Compactness of $\PortSet$ and affinity of $\mu$ make the stopped drift and diffusion coefficients of both $Z^v-z$ and $L^X$ uniformly bounded. The Burkholder--Davis--Gundy inequality therefore gives constants $C_Z,C_X<\infty$, independent of $u,v,t,z$, such that for $h\le1$,
\begin{equation}
\begin{split}
  \E_{t,1,z}^v\!\left[\sup_{t\le s\le t+h}
  |Z_{s\wedge\tau_{\varrho}}^v-z|^2\right]&\le C_Zh,\\
  \E_{t,1,z}^v\!\left[\sup_{t\le s\le t+h}
  |L^X_{s\wedge\tau_{\varrho}}|^2\right]&\le C_Xh.
\end{split}
\label{eq:crra-coverage-stopped-moments}
\end{equation}
Hence
\[
  \Pp_{t,1,z}^v(\tau_{\varrho}\le t+h)
  \le \frac{C_Zh}{\varrho^2},
  \qquad
  \Pp_{t,1,z}^v\!\left(
    \sup_{t\le s\le t+h}X_s^v>2,\ \tau_{\varrho}>t+h
  \right)
  \le \frac{C_Xh}{(\log2)^2}.
\]
Choose $h_0\le\min\{\tau/2,1\}$ so that the sum of these bounds is at most $1/2$. On the complementary event, $|Z_s^v-z|\le\varrho$ and $X_s^v\le2$ on $[t,t+h]$. Since $\chi<0$ and $F^u\ge m_{\tau,R}$ there,
\[
  w^u(s,S_s^v)=(X_s^v)^\chi F^u(s,Z_s^v)
  \ge2^\chi m_{\tau,R}.
\]
The good event has probability at least $1/2$, and integration gives \cref{eq:crra-scale-coverage}.
\end{proof}

\begin{proposition}[Weighted comparison and terminal attachment]
\label{prop:crra-analytic-closure}
Under \cref{ass:crra-global}, two strong solutions in $\mathcal S_{\eta_0}$ of \cref{eq:crra-global-hjb} with the same terminal data coincide. Moreover, for every $R<\infty$ there is $C_R<\infty$ such that
\begin{equation}
  \sup_{u:\,[0,T]\times\R^{d_z}\to\PortSet\ \mathrm{Borel}}
  \sup_{|z|\le R}|F^u(t,z)-1|\le C_R(T-t),
  \qquad t\in[\max\{0,T-1\},T].
  \label{eq:terminal-attachment}
\end{equation}
Thus every locally uniform limit of fixed-policy factors has terminal value one. The weighted comparison uses an exponential Lyapunov barrier to absorb the linearly growing zero-order coefficient and the parabolic ABP--Krylov maximum principle for $W_p^{2,1}$ strong solutions.
\end{proposition}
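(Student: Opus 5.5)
The proposition has two independent parts, and I will prove them separately: the weighted comparison (uniqueness) for the reduced HJB equation \cref{eq:crra-global-hjb}, and the policy-uniform terminal attachment \cref{eq:terminal-attachment}.

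\emph{Comparison.} Let $F_1,F_2\in\mathcal S_{\eta_0}$ solve \cref{eq:crra-global-hjb} with $F_i(T,\cdot)=1$, and set $\delta:=F_1-F_2$. The Bellman Hamiltonian $\Hgen_{\mathrm B}$ is a supremum over compact $\PortSet$ of expressions affine in $(F,q)$. So the difference of the two Hamiltonians can be written as $\beta(t,z)\delta+\ell(t,z)^\top\D_z\delta$, with measurable coefficients obtained from measurable near-maximizers (or a convex-combination/mean-value argument). These coefficients satisfy $|\ell|\le L_q$ and $|\beta|\le |\chi|(C_0+C_1|z|)$ by \cref{eq:hamiltonian-growth}. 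Hence $\delta$ is a $W^{2,1}_{p,\mathrm{loc}}$ strong solution of a linear equation with a constant uniformly elliptic principal part, an OU drift plus a bounded perturbation, and a zero-order coefficient of linear growth. The data are $\delta(T,\cdot)=0$ and $|\delta|\le Ce^{\eta_0\langle z\rangle}$.

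Next I introduce a barrier $\psi(t,z)=e^{\kappa(T-t)+\eta_1\langle z\rangle}$ with $\eta_1>\eta_0$. Because $K_Z$ is positive stable, the OU drift contributes roughly $-\eta_1 c|z|$ to $\psi^{-1}\mathcal L\psi$. The main obstacle is to absorb the linearly growing $\beta$, whose slope $|\chi|C_1$ need not be small relative to the restoring drift. I would first try a time-dependent exponent $\eta_1(t)=\eta_1 e^{\kappa'(T-t)}$, whose $\partial_t$ term $-\kappa'\eta_1\langle z\rangle$ dominates $|\chi|C_1|z|$ once $\kappa'$ is large. If that does not close on $[0,T]$ at once, I would repeat the argument on short time slabs backward from $T$, using the same exponential-moment structure as in \cref{prop:crra-fixed-policy-regularity}. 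With the barrier in hand, $\delta/\psi$ satisfies a linear inequality with a nonpositive zero-order term and vanishes at infinity and at $t=T$. The parabolic ABP--Krylov maximum principle on expanding cylinders, which is valid for $W^{2,1}_p$ strong solutions with $p>d_z+1$, then gives $\sup\delta/\psi\le\varepsilon$ for every $\varepsilon$. Applying the same argument to $-\delta$ yields $\delta\equiv0$.

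\emph{Terminal attachment.} I will use the Feynman--Kac representation $F^u(t,z)=\E^{Q_u}_{t,z}e^{A_u}$ with $A_u=\int_t^Tc^u(r,Z_r)\dd r$, where $|c^u|\le|\chi|(C_0+C_1|Z_r|)$ uniformly over Borel $\PortSet$-valued $u$. The elementary bound $|e^x-1|\le|x|e^{|x|}$ gives
\[
|F^u(t,z)-1|\le (T-t)\,|\chi|\,\E^{Q_u}\Bigl[\sup_{r}(C_0+C_1|Z_r|)\,e^{(T-t)|\chi|(C_0+C_1\sup_r|Z_r|)}\Bigr].
\]
For $T-t\le1$ and $|z|\le R$, the OU dynamics with bounded drift perturbation $\chi C^\top u$ have policy-uniform Gaussian-type tails for $\sup_r|Z_r|$, via variation of constants and BDG. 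So the expectation is bounded by some $C_R$ independent of $u$, which proves \cref{eq:terminal-attachment}. For the final claim, any locally uniform limit $F$ of $F^{u_n}$ satisfies $|F(t,z)-1|\le C_R(T-t)$. Continuity then forces $F(T,z)=1$.
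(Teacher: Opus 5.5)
Your argument is correct. The terminal-attachment half is the paper's own proof: a Feynman--Kac representation, the bound $|e^A-1|\le|A|e^{|A|}$, and policy-uniform exponential moments of $\sup_r|Z_r|$. The only difference there is the source of the tails. The paper uses Dambis--Dubins--Schwarz and reflection; you use variation of constants and BDG, which should be applied to the martingale $\int_t^r e^{K_Z s}\Lambda\,\mathrm{d}B_s$ after factoring out $e^{-K_Z r}$.

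The comparison half differs in two respects.

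First, the linearization. The paper inserts the greedy selector $\widehat a=\mathcal G[F_2]$ into both Hamiltonians and uses $\chi<0$ to get a one-sided subsolution inequality for $F_1-F_2$, then exchanges roles. Your convex-combination identity with averaged coefficients is an equivalent device.

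Second, and more substantively, the barrier. The obstacle you describe does not actually arise. The mean-reversion contribution to $\psi^{-1}\Lcal\psi$ is proportional to the exponent, so a static exponent larger than $|\chi|C_1/c_\flat$ already absorbs the linearly growing potential; this is exactly the paper's choice. However, your claim that positive stability makes the drift contribute $-\eta_1c|z|$ with the Euclidean weight $\langle z\rangle$ is false in general. A positive-stable $K_Z$ can have an indefinite symmetric part. For that reason the paper uses $\rho_Z(z)=(1+z^\top M_Zz)^{1/2}$ with a Lyapunov matrix satisfying $K_Z^\top M_Z+M_ZK_Z\succeq2c_ZI$.

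Your time-dependent exponent $\eta(t)=\eta_1e^{\kappa'(T-t)}$ repairs this and closes on all of $[0,T]$ at once, so no time slabs are needed:
\begin{itemize}
\item the drift contributes at most $\eta(t)(\norm{K_Z}|z|+c_1)$;
\item the diffusion contributes at most $c_2(\eta(t)+\eta(t)^2)$, which is bounded since $\eta(t)\le\eta_1e^{\kappa'T}$;
\item taking $\kappa'\ge\norm{K_Z}+|\chi|C_1/\eta_1$ removes all $|z|$-growth;
\item a large $\kappa$ then makes the zero-order coefficient for $\delta/\psi$ nonpositive;
\item $\eta(t)\ge\eta_1>\eta_0$ preserves decay at infinity.
\end{itemize}

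Your route does not use mean reversion at all and works for any affine drift on a finite horizon. The paper's route exploits the OU structure to keep a time-independent spatial weight, with all zero-order growth absorbed into a single constant $\lambda(\hat\eta)$.
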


\begin{proof}
See Appendix~\ref{app:crra-global-proof}.
\end{proof}

\begin{theorem}[Global convergence of exact HJB policy iteration]
\label{thm:crra-global-convergence}
Under \cref{ass:crra-global}, the reduced HJB equation \eqref{eq:crra-global-hjb} has a unique positive strong solution $F^\star$ in the exponential weighted class generated by the iteration. The feedback $u^\star=\mathcal G[F^\star]$ is optimal for the constrained portfolio problem and
\[
  V^\star(t,x,z)=\frac{x^\chi}{\chi}F^\star(t,z).
\]
For the warm start $u^0$ in \cref{ass:crra-global} and gains $0<\underline\beta\le\beta_k\le\overline\beta\le1$, the exact iteration
\begin{equation}
  u^{k+1}=(1-\beta_k)u^k+\beta_k\mathcal G[F^{u^k}]
  \label{eq:crra-exact-damped-iteration}
\end{equation}
satisfies
\begin{equation}
  V^{u^k}(t,x,z)\uparrow V^\star(t,x,z),
  \label{eq:crra-value-global-convergence}
\end{equation}
$F^{u^k}\to F^\star$ locally in $C^{(1+\bar\alpha)/2,1+\bar\alpha}$ for every $0<\bar\alpha<1-(d_z+2)/p$, and
\begin{equation}
  u^k\to u^\star,\qquad
  \norm{\Tcal(u^k)-u^k}_{L^\infty(K_{\tau,R})}\to0
  \label{eq:crra-policy-global-convergence}
\end{equation}
locally uniformly on $[0,T)\times\R^{d_z}$. No contraction of $\Tcal$ is assumed.
\end{theorem}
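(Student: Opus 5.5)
The argument is monotone convergence plus compactness plus identification. It uses the policy-uniform estimates of Proposition~\ref{prop:crra-fixed-policy-regularity}, the damped improvement bound of Proposition~\ref{thm:exact-value-improvement}, the occupation lower bound \eqref{eq:crra-scale-coverage}, and the comparison and terminal-attachment statements of Proposition~\ref{prop:crra-analytic-closure}. No contraction is used.

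\textbf{Step 1 (monotonicity and summability).} By Proposition~\ref{prop:crra-fixed-policy-regularity}, every iterate stays in the invariant class $\mathfrak U$, and the performance-difference identity holds for successive pairs. Proposition~\ref{thm:exact-value-improvement} with $H^u=\gamma\Sigma$ and $w^u=x^\chi F^u$ then gives
\[
V^{u^{k+1}}-V^{u^k}\ge \tfrac{\underline\beta}{2}\,\E^{u^{k+1}}\!\Bigl[\int w^{u^k}\norm{\Tcal(u^k)-u^k}_{\gamma\Sigma}^2\dd r\Bigr]\ge0 .
\]
Since $\chi<0$, this means $F^{u^k}$ is pointwise nonincreasing. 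By \eqref{eq:crra-two-sided-exponential-envelope}, $F^{u^k}$ is bounded below by $C^{-1}e^{-\eta_0\langle z\rangle}$. So $F^{u^k}\downarrow F^\infty>0$ pointwise, and the value increments are summable at every starting point.

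\textbf{Step 2 (compactness).} The uniform $W^{2,1}_p$ bound \eqref{eq:crra-policy-uniform-Wp} and the compact embedding \eqref{eq:crra-compact-embedding} upgrade the monotone pointwise limit to local convergence in $C^{(1+\bar\alpha)/2,1+\bar\alpha}$ for the full sequence, since every subsequential limit equals the pointwise limit $F^\infty$. Hence $\D_z\log F^{u^k}\to \D_z\log F^\infty$ locally uniformly. Because $\Pi^\Sigma_{\PortSet}$ is Lipschitz, $\Tcal(u^k)=\mathcal G[F^{u^k}]\to\mathcal G[F^\infty]$ locally uniformly.

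\textbf{Step 3 (residual to zero).} This is the main obstacle: turning summable occupation-weighted residuals into a pointwise statement. Suppose $\norm{\Tcal(u^k)-u^k}_{L^\infty(K_{\tau,R})}\not\to0$. Then along a subsequence there are points $(t_k,z_k)\in K_{\tau,R}$ at which the residual exceeds some $\delta>0$. The family $\{\Tcal(u^k)-u^k\}$ has a common H\"older modulus, because both $\mathcal G[F^{u^k}]$ and $u^k$ lie in $\mathfrak U$. So the residual exceeds $\delta/2$ on a parabolic ball of radius $\varrho$ and duration $h$ around $(t_k,z_k)$, with $\varrho$ and $h$ independent of $k$. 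Now start the process at $(t_k,1,z_k)$. The coverage bound \eqref{eq:crra-scale-coverage}, applied with $v=u^{k+1}$ (the deployed policy), bounds the increment $V^{u^{k+1}}-V^{u^k}$ at that point below by $c\,\delta^2 c_{\tau,R}(h)>0$, uniformly in $k$. The increments are, however, summable, and they tend to zero locally uniformly by Step 2 together with the uniform $C^0$ convergence of $F^{u^k}$ on compacts. This is a contradiction, which proves the residual claim in \eqref{eq:crra-policy-global-convergence}.

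\textbf{Step 4 (policy limit and identification).} From Steps 2 and 3, $u^k\to u^\infty:=\mathcal G[F^\infty]$ locally uniformly. Passing to the limit in the strong form of \eqref{eq:crra-global-fixed-policy} uses the weak $W^{2,1}_p$ limits and the uniform convergence of the coefficients $\widetilde b^{u^k}$ and $c^{u^k}$. It shows that $F^\infty$ solves the fixed-policy equation for $u^\infty$, and $F^\infty(T,\cdot)=1$ by \eqref{eq:terminal-attachment}. Since $u^\infty$ is the pointwise maximizer \eqref{eq:crra-global-recovery}, the Bellman supremum in \eqref{eq:crra-global-hjb} is attained at $u^\infty$. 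Hence $F^\infty$ is a strong solution of the HJB equation in $\mathcal S_{\eta_0}$, and comparison (Proposition~\ref{prop:crra-analytic-closure}) gives uniqueness; set $F^\star:=F^\infty$ and $u^\star:=u^\infty$.

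Optimality of $u^\star$ is a verification argument. For any admissible $\PortSet$-valued control $v$, apply It\^o's formula to $x^\chi F^\star/\chi$ under $v$, with localization and the exponential and negative-wealth moments. The HJB inequality shows that $V^v\le V^\star$, with equality for $u^\star$. Finally, monotone convergence of $F^{u^k}$ to $F^\star$ gives $V^{u^k}\uparrow V^\star$.
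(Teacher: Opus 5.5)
Your proposal is correct and follows the paper's proof essentially step for step. The shared steps are:
\begin{enumerate}
\item monotone decrease of $F^{u^k}$ from the exact damped improvement bound;
\item full-sequence local $C^{(1+\bar\alpha)/2,1+\bar\alpha}$ convergence from the policy-uniform $W^{2,1}_p$ bounds, with subsequential limits identified with the monotone pointwise limit;
\item residual decay via equicontinuity and the short-time occupation bound, played against locally uniformly vanishing value increments at moving starts;
\item passage to the limit in the fixed-policy equation, then terminal attachment and weighted comparison.
\end{enumerate}
The only difference is cosmetic. You run the verification on the full-wealth function $x^\chi F^\star/\chi$, whereas the paper applies It\^o--Krylov to the $W^{2,1}_{p,\mathrm{loc}}$ solution in the reduced factor representation under $Q_\nu$; with the negative wealth moments you invoke, this is the same computation.
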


\begin{proof}
Exact value improvement makes $F^{u^k}$ monotone, and Proposition~\ref{prop:crra-fixed-policy-regularity} supplies the invariant class, performance-difference regularity, and locally strong subsequential compactness. The short-time occupation bound upgrades integrated residual summability to local uniform residual decay, which identifies every limit as a strong solution of the reduced HJB equation. Proposition~\ref{prop:crra-analytic-closure} supplies the terminal datum and uniqueness. The It\^o--Krylov/Girsanov verification then identifies the common limit with the constrained financial value and its greedy feedback with an optimizer. Appendix~\ref{app:crra-global-proof} gives the complete comparison, limit, and verification argument.
\end{proof}

For gains $0<\underline\beta\le\beta_k\le\overline\beta\le1$, consider the population OL-BPTT recursion
\begin{equation}
  u^{k+1}=(1-\beta_k)u^k+\beta_k\Acal(u^k).
  \label{eq:crra-adjoint-damped-iteration}
\end{equation}

\begin{assumption}[Directional improvement and asymptotic stationarity compatibility]
\label{ass:crra-adjoint-compatible}
Set
\begin{equation}
  c_{\overline\beta}:=\frac{2-\overline\beta}{2},
  \qquad \kappa_{\mathrm{dir}}\in\R,\quad \kappa_{\mathrm{dir}}<c_{\overline\beta}.
  \label{eq:crra-directional-kappa}
\end{equation}
For the actual sequence in \cref{eq:crra-adjoint-damped-iteration}, the following two conditions hold.

\emph{(A1) Directional improvement.} For every $k$ and starting state,
\begin{equation}
  \Xi_{\mathrm A}^{u^k,\beta_k}
  \le \kappa_{\mathrm{dir}}
  \bigl(D_{\mathrm A}^{u^k,\beta_k}\bigr)^2.
  \label{eq:crra-directional-alignment}
\end{equation}

\emph{(A2) Asymptotic stationarity compatibility.} For every compact cylinder $K_{\tau,R}$, every sequence $k_j\to\infty$, and every moving sequence $(t_j,z_j)\in K_{\tau,R}$,
\begin{equation}
  D_{\mathrm A}^{u^{k_j},\beta_{k_j}}(t_j,1,z_j)\longrightarrow0
  \quad\Longrightarrow\quad
  M_{\mathrm A}^{u^{k_j},\beta_{k_j}}(t_j,1,z_j)\longrightarrow0.
  \label{eq:crra-stationarity-compatibility}
\end{equation}
The regularity and invariant-class conclusions needed by the iteration follow from Proposition~\ref{prop:crra-fixed-policy-regularity}.
\end{assumption}

\begin{remark}[Strong norm-relative sufficient condition and auditability]
\label{rem:crra-strong-relative-route}
A common constant $0\le\kappa<c_{\overline\beta}$ satisfying, for the actual iterates and starts,
\begin{equation}
  E_{\mathrm A}^{u^k,\beta_k}
  \le\kappa D_{\mathrm A}^{u^k,\beta_k}
  \label{eq:crra-relative-adjoint-condition}
\end{equation}
implies both parts of \cref{ass:crra-adjoint-compatible}: Cauchy--Schwarz gives \cref{eq:crra-directional-alignment} with $\kappa_{\mathrm{dir}}=\kappa$, while $M_{\mathrm A}\le E_{\mathrm A}$ gives \cref{eq:crra-stationarity-compatibility}. Thus the norm-relative condition is a convenient strong sufficient condition, not the primitive convergence hypothesis. Analytic verification propositions naturally control $E_{\mathrm A}$ through the defect equation, whereas the theorem itself only requires the map-distance version in \cref{eq:crra-stationarity-compatibility}. For a full step the sufficient norm-relative threshold is $1/2$; for the fixed half-step protocol it is $3/4$. The direct ratio $\widehat\kappa_{\mathrm{dir}}=\widehat\Xi_{\mathrm A}/\widehat D_{\mathrm A}^{\,2}$ and the stronger norm ratio $\widehat\kappa_{\mathrm{occ}}=\widehat E_{\mathrm A}/\widehat D_{\mathrm A}$ are estimated under the updated-policy law in \cref{tab:kappa-occupation-audit}. The directional ratio audits (A1); a uniform norm-relative bound implies both hypotheses. Both error ratios require an exact or independently estimated HJB reference, whereas the update field $\Acal(u)-u$ is available from the algorithm.
\end{remark}

\begin{theorem}[Global convergence of population OL-BPTT policy iteration]
\label{thm:crra-adjoint-global}
Suppose \cref{ass:crra-global,ass:crra-adjoint-compatible} hold for \cref{eq:crra-adjoint-damped-iteration}. Then, for every starting state,
\begin{equation}
\begin{split}
  V^{u^{k+1}}-V^{u^k}
  \ge{}&c_{\mathrm A}
  \E^{u^{k+1}}\left[
  \int w^{u^k}
  \norm{\Acal(u^k)-u^k}_{\gamma\Sigma}^2\dd r
  \right],\\
  c_{\mathrm A}:={}&
  \underline\beta(c_{\overline\beta}-\kappa_{\mathrm{dir}})>0.
  \label{eq:crra-adjoint-value-increment}
\end{split}
\end{equation}
In particular, the weighted population OL-BPTT movements are square summable for every fixed start. The value, factor, and policy conclusions of \cref{thm:crra-global-convergence} hold for this sequence, and on every compact cylinder
\begin{equation}
  \norm{\Tcal(u^k)-u^k}_\infty
  +\norm{\Acal(u^k)-u^k}_\infty
  +\norm{\Acal(u^k)-\Tcal(u^k)}_\infty
  \longrightarrow0.
  \label{eq:crra-adjoint-residual-convergence}
\end{equation}
No contraction of either operator is assumed.
\end{theorem}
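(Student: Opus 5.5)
The plan is to reduce the population OL-BPTT recursion to the skeleton of the proof of \cref{thm:crra-global-convergence}. The only step specific to the adjoint recursion is turning (A1)--(A2) into the same summability and residual-decay facts that exact value improvement provides there.

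\emph{Step 1: value increment.} For each $k$ and start $(t,x,z)$, apply \cref{cor:occupancy-relative-adjoint-step} with $u=u^k$ and $\beta=\beta_k$. Proposition~\ref{prop:crra-fixed-policy-regularity} guarantees that $u^k$ stays in the invariant class $\mathfrak U$ and that \cref{eq:performance-difference} holds for the pair $(u^k,u^{k+1})$. In the CRRA reduction $H^u=\gamma\Sigma$ and $w^u=x^\chi F^u$, so \cref{eq:occupancy-directional-base-bound} combined with (A1) gives
\[
V^{u^{k+1}}-V^{u^k}\ge\beta_k(c_{\beta_k}-\kappa_{\mathrm{dir}})(D_{\mathrm A}^{u^k,\beta_k})^2 .
\]
Since $c_\beta$ is decreasing in $\beta$, we have $c_{\beta_k}\ge c_{\overline\beta}$, and hence $\beta_k(c_{\beta_k}-\kappa_{\mathrm{dir}})\ge\underline\beta(c_{\overline\beta}-\kappa_{\mathrm{dir}})=c_{\mathrm A}$. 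This is \cref{eq:crra-adjoint-value-increment}. If $\kappa_{\mathrm{dir}}<0$ the bound only improves. The values are monotone, and \cref{eq:crra-two-sided-exponential-envelope} bounds them above (since $\chi<0$, $V^u=x^\chi F^u/\chi\le0$). Telescoping therefore gives $\sum_k (D_{\mathrm A}^{u^k,\beta_k})^2<\infty$ at every start, which is the stated square summability.

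\emph{Step 2: local uniform decay of the movements.} This is the step I expect to be the main obstacle. Summability at each start only gives $D_{\mathrm A}^{u^k,\beta_k}(t,1,z)\to0$ pointwise, and the occupation integrand is taken along the \emph{new} law $S^{u^{k+1}}$. I would argue by contradiction. Suppose that for some $\varepsilon>0$ there are $k_j\to\infty$ and points $(t_j,z_j)\in K_{\tau,R}$ with $|\Acal(u^{k_j})-u^{k_j}|(t_j,z_j)\ge\varepsilon$. Proposition~\ref{prop:crra-fixed-policy-regularity} gives $\Acal(u)$ and $u$ a common policy-uniform Hölder modulus, so the movement stays at least $\varepsilon/2$ on a parabolic ball of radius $\varrho$ around $(t_j,z_j)$, with $\varrho$ independent of $j$. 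The short-time occupation bound \cref{eq:crra-scale-coverage}, applied with deployment control $v=u^{k_j+1}$ and evaluation feedback $u^{k_j}$, then yields
\[
D_{\mathrm A}^{u^{k_j},\beta_{k_j}}(t_j,1,z_j)^2\ge c\,\varepsilon^2 c_{\tau,R}(h)>0 .
\]
To contradict this I need uniform smallness of $D$ along moving starts, not just pointwise decay. I would obtain it from continuity of $(t,z)\mapsto V^{u^k}(t,1,z)$ together with Dini's theorem: the increasing sequence $V^{u^k}$ converges to a limit that Step 3 identifies as the continuous $V^\star$. Alternatively, summability at a finite $\varrho$-net combined with the Hölder modulus should suffice. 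Once the movement decays locally uniformly, (A2) gives $M_{\mathrm A}\to0$ along the same moving sequences. The identical coverage argument, now applied to the Hölder-continuous map $\Acal(u)-\Tcal(u)$, gives $\norm{\Acal(u^k)-\Tcal(u^k)}_\infty\to0$ on compacts. The triangle inequality then gives \cref{eq:crra-adjoint-residual-convergence}.

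\emph{Step 3: identification of the limit.} From here I would follow the proof of \cref{thm:crra-global-convergence} verbatim, with local uniform decay of $\Tcal(u^k)-u^k$ playing the role it plays there. The bounds \cref{eq:crra-policy-uniform-Wp} and the compact embedding \cref{eq:crra-compact-embedding} give subsequential convergence of $F^{u^k}$ in $C^{(1+\bar\alpha)/2,1+\bar\alpha}_{\mathrm{loc}}$. Monotonicity of the values makes the limit unique along the full sequence. Since $u^k-\mathcal G[F^{u^k}]\to0$, the fixed-policy equations pass to the limit as the reduced HJB \cref{eq:crra-global-hjb}, using $W^{2,1}_p$ stability of strong solutions. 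Proposition~\ref{prop:crra-analytic-closure} supplies the terminal value one and weighted uniqueness, so the limit is $F^\star$. Then $u^k\to\mathcal G[F^\star]=u^\star$ locally uniformly, and the verification step of \cref{thm:crra-global-convergence} gives $V^{u^k}\uparrow V^\star$.
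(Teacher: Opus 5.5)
Your proposal follows the paper's own proof: the directional increment from Corollary~\ref{cor:occupancy-relative-adjoint-step} (using $c_{\beta_k}\ge c_{\overline\beta}$), telescoping, a coverage-plus-equicontinuity contradiction for $\Acal(u^k)-u^k$, (A2) for the map distance, the triangle inequality, and then the identification argument of Theorem~\ref{thm:crra-global-convergence}.

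\emph{The Dini step is circular as written and needs reordering.} In Step~2 you get uniform smallness of $D_{\mathrm A}$ over moving starts from Dini's theorem, with the limit ``identified by Step~3 as the continuous $V^\star$.'' But Step~3's identification uses $u^k-\mathcal G[F^{u^k}]\to0$, which is what Step~2 is supposed to prove. The fix is the argument you already give at the start of Step~3, which the paper uses at this point. Monotonicity of $F^{u^k}$, combined with the policy-uniform bound \eqref{eq:crra-policy-uniform-Wp} and the compact embedding \eqref{eq:crra-compact-embedding}, gives locally uniform convergence of $F^{u^k}$ to a positive continuous $\overline F$. This does not require identifying $\overline F$. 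Consecutive value increments at unit wealth then vanish uniformly on $K_{\tau,R}$. Hence $\sup_{K_{\tau,R}}D_{\mathrm A}^{u^k,\beta_k}\to0$, which is exactly what the moving-start form of (A2) needs.

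\emph{Your finite-net alternative is valid and slightly more economical.} Suppose $\norm{\Acal(u^k)-u^k}\ge\varepsilon$ at some point of $K_{\tau,R}$ for infinitely many $k$. The common modulus transfers a lower bound $\varepsilon/2$ to a short space--time window around one of finitely many net starts. Then \eqref{eq:crra-scale-coverage}, with evaluation feedback $u^k$ and deployment $u^{k+1}$, bounds $D_{\mathrm A}$ away from zero at that fixed net start infinitely often. This contradicts pointwise summability. The same argument for $\Acal(u^k)-\Tcal(u^k)$ needs only $M_{\mathrm A}\to0$ at fixed starts, i.e.\ (A2) for constant start sequences.

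The trade-off is as follows. The net route proves the theorem under a weaker form of (A2) and needs no uniform convergence of values. The paper's route uses the moving-start form of (A2), but in return delivers the stronger uniform statements $\sup_{K_{\tau,R}}D_{\mathrm A}\to0$ and $\sup_{K_{\tau,R}}M_{\mathrm A}\to0$.
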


\begin{proof}
The directional part of Corollary~\ref{cor:occupancy-relative-adjoint-step} gives \cref{eq:crra-adjoint-value-increment}. Monotonicity and the invariant-class bounds yield a locally uniform factor limit, so consecutive value increments vanish locally uniformly; the displayed bound then forces the occupation movement $D_{\mathrm A}^{u^k,\beta_k}$ to vanish uniformly over compact sets of starts. The moving-start compatibility condition \cref{eq:crra-stationarity-compatibility} transfers this decay to $M_{\mathrm A}^{u^k,\beta_k}$. The common local moduli and short-time occupation bound in Proposition~\ref{prop:crra-fixed-policy-regularity} upgrade these two occupation statements to local uniform decay of $\Acal(u^k)-u^k$ and $\Acal(u^k)-\Tcal(u^k)$; their triangle sum gives the HJB residual decay. Proposition~\ref{prop:crra-analytic-closure} attaches the terminal condition, and the exact HJB identification argument yields the stated value, factor, and policy limits. Appendix~\ref{app:crra-adjoint-global-proof} gives the complete moving-restart and limit argument.
\end{proof}
\subsection{Portfolio benchmark hierarchy}

\paragraph{Merton and one-factor predictable returns.}

With constant excess-return vector $\mu_0$, the unconstrained Merton optimizer is policy independent \citep{Merton1969}:
\begin{equation}
  u^\star=\gamma^{-1}\Sigma^{-1}\mu_0.
  \label{eq:merton-solution}
\end{equation}
Under convex trading bounds, its metric projection is still policy independent, so the model is a negative control for iteration.

For a scalar OU predictor,
\begin{align}
  \dd Y_t&=\kappa_Y(\bar y-Y_t)\dd t+\nu\dd B_t^Y,\\
  \frac{\dd X_t}{X_t}
  &=\{r_f+u_t(\mu_0+\mu_1Y_t)\}\dd t+u_t\sigma\dd B_t^R,
  \label{eq:predictable-dynamics}
\end{align}
where $\kappa_Y>0$ is the predictor mean-reversion rate, $\bar y$ its long-run mean, $\nu$ and $\sigma$ are factor and return volatilities, $\mu(y):=\mu_0+\mu_1y$, and $\dd\langle B^R,B^Y\rangle_t=\rho\dd t$. For a fixed feedback $u=u(t,y)$, homotheticity gives
\begin{equation}
  V^u(t,x,y)=\frac{x^{1-\gamma}}{1-\gamma}F^u(t,y),
\end{equation}
and the fixed-policy PDE
\begin{equation}
\begin{split}
  0={}&F_t^u+\bigl[\kappa_Y(\bar y-y)+(1-\gamma)u\sigma\nu\rho\bigr]F_y^u
  +\frac{\nu^2}{2}F_{yy}^u\\
  &+(1-\gamma)\left[r_f+u\mu(y)-\frac\gamma2u^2\sigma^2\right]F^u,
  \qquad F^u(T,y)=1.
  \label{eq:fixed-policy-pde}
\end{split}
\end{equation}
The exact HJB policy-improvement operator, which identifies $\Tcal$ for this model, is
\begin{equation}
  \Tcal_{\mathrm{PDE}}(u)(t,y)
  =\frac{\mu(y)+\rho\sigma\nu\,\partial_y\log F^u(t,y)}{\gamma\sigma^2}.
  \label{eq:pde-map}
\end{equation}

\paragraph{One-factor policy dependence.}
Fix a bounded policy class $\mathcal P$ and a norm $\norm{\cdot}_{\Hpol}$ for which pointwise box projection is nonexpansive. Suppose the fixed-policy PDE satisfies the gradient-sensitivity estimate
\begin{equation}
  \norm{\partial_y\log F^u-\partial_y\log F^v}_{\Hpol}
  \le C_{\log}(T;\mathcal P)\norm{u-v}_{\Hpol},
  \qquad u,v\in\mathcal P.
  \label{eq:logF-sensitivity}
\end{equation}
Subtracting \cref{eq:pde-map} for two policies and using nonexpansiveness of metric projection gives
\begin{equation}
  \norm{\Tcal(u)-\Tcal(v)}_{\Hpol}
  \le L_{\Tcal}(T;\mathcal P)\norm{u-v}_{\Hpol},
  \qquad
  L_{\Tcal}(T;\mathcal P)=\frac{|\rho|\nu}{\gamma\sigma}C_{\log}(T;\mathcal P).
  \label{eq:onefactor-map-constant}
\end{equation}
In particular, $\rho=0$ implies $L_{\Tcal}(T;\mathcal P)=0$: the operator is policy independent and the ideal undamped update reaches its recovered policy in one step. If a parabolic stability estimate gives $C_{\log}(T;\mathcal P)\le C_{\mathcal P}T$ on a short horizon, then $L_{\Tcal}(T;\mathcal P)=O(|\rho|\nu T)$.

The unconstrained analytical optimum is affine in $y$ \citep{KimOmberg1996,Liu2007}. A box constraint creates lower, interior, and upper action regions whose boundaries can be computed by a one-dimensional HJB reference.

\section{Portfolio experiments and adjoint--HJB consistency audits}
\label{sec:experiments}

Unless stated otherwise, results average three independent outer seeds; paired comparisons use common random numbers \citep{Glasserman2003}. Policy root-mean-square error (RMSE) uses the stated reference and evaluation law; on-policy evaluation uses the factor-occupancy law. Active-set error measures constraint-status mismatch; the KKT residual measures local first-order optimality violation. Nominal budgets count post-warm-start path-equivalent operator simulations, excluding the common actor warm start and deterministic regression/QP overhead. \emph{One-shot} evaluates under $u^0$ once; \emph{current-policy re-evaluation} evaluates under the updated rollout; \emph{pooled initial-policy refinement} combines estimates under $u^0$ using the stated budget and averaging rule.

\subsection{When should the policy-improvement operator be re-evaluated?}
\label{sec:when-iterate}

\paragraph{Merton negative control.}
In the one-hundred-asset no-short Merton benchmark the exact improvement operator is policy independent. The warm-start actor has RMSE $7.61\times10^{-3}$; one standard update lowers it to $2.53\times10^{-3}$. With six update-equivalent budgets, a single concentrated estimate reaches $6.00\times10^{-4}$, compared with $1.09\times10^{-3}$ for pooled initial-policy refinement and $1.62\times10^{-3}$ for current-policy iteration. The corresponding KKT residuals are $1.08\times10^{-3}$, $1.41\times10^{-3}$, and $1.94\times10^{-3}$, respectively. Iteration is therefore not a universal variance-reduction device.

\paragraph{Predictable-return mechanism.}
Starting from the myopic policy, RMSE falls from $5.99\times10^{-2}$ to $2.67\times10^{-3}$ after one update. Re-evaluating under that policy yields a second raw candidate with mean RMSE $6.58\times10^{-4}$ across the same three seeds (\cref{fig:reevaluation}). This smooth one-factor diagnostic uses common-random-number Monte Carlo log-certainty-equivalent (log-CE) estimates for Armijo backtracking before deployment: a central finite-difference slope, sufficient-increase coefficient $10^{-4}$, step halving down to $2^{-12}$, and a monotone-improvement fallback. This safeguard is separate from the positive-gain damped recursion analysed in Sections~\ref{sec:theory}--\ref{sec:portfolio}. The candidate is accepted in one seed; the other two have negative estimated directional slopes and retain the previous policy. The deployed-policy mean is $2.003\times10^{-3}$. Reaching the second candidate uses two standard operator passes, whereas concentrating eight passes' budget at the initial policy leaves RMSE near $2.68\times10^{-3}$. Re-evaluation thus adds policy-dependent information, although candidate accuracy and acceptance are distinct. A deterministic fixed-policy PDE profile favors a gain near one half (Appendix~\ref{app:numerical-details}).

\begin{figure}[!htbp]
\centering
\includegraphics[width=0.72\textwidth]{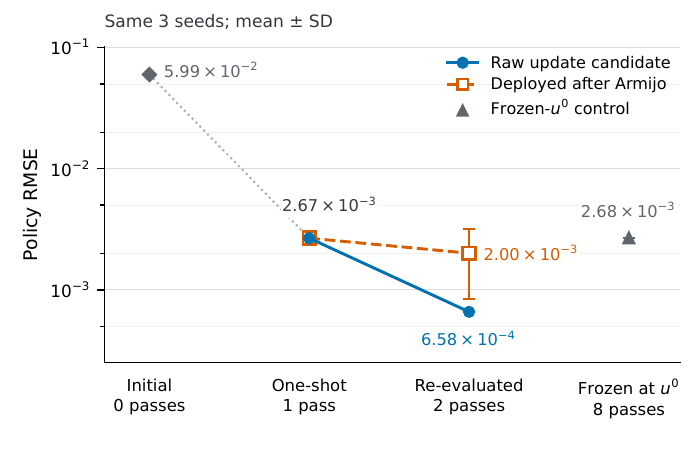}
\caption{Policy-dependent re-evaluation against the analytical reference. Means and one-standard-deviation bars use seeds $260730$, $260731$, and $260732$. Raw candidates and policies after Armijo acceptance are distinguished. The frozen-$u^0$ control uses eight nominal operator passes, versus two for re-evaluation.}
\label{fig:reevaluation}
\end{figure}

\subsection{Population consistency and occupation-measure audits}
\label{sec:population-audits}

The constrained experiments use the CRRA-reduced fixed-latent target in \cref{eq:crra-reduced-ol-target}. An independent common-random-number finite-difference audit first checks the corrected HJB reference (\cref{fig:population-audits}). At $131{,}072$ paths, the physical-measure calibration slopes are $1.0154$ for finite difference and $1.0621$ for OL-BPTT; under the tilted Feynman--Kac law they are $1.0146$ and $1.0618$. The tilted relative $L^2$ errors are $1.48\%$ and $6.66\%$, respectively. Tilting raises the median effective-sample-size fraction from $0.645$ to $0.936$ and lowers the top-one-percent weight share from $10.1\%$ to $2.24\%$. Additional paths and a better sampling law therefore reduce uncertainty but do not remove the population fixed-policy defect.

\Cref{cor:olbptt-residual-defect} predicts that the defect decays with the policy-improvement residual. On four exact half-step HJB policy-iteration iterates, using $65{,}536$ tilted paths for each of three seeds, the theorem-covered common regular subset has log--log slope $0.932$ and correlation $0.987$; the all-point aggregation gives $0.938$ and $0.986$. These are descriptive residual-scaled co-decay statistics, not estimates of the occupation-measure constant $\kappa$.

\begin{figure}[!htbp]
\centering
\begin{subfigure}[t]{0.62\textwidth}
  \includegraphics[width=\linewidth]{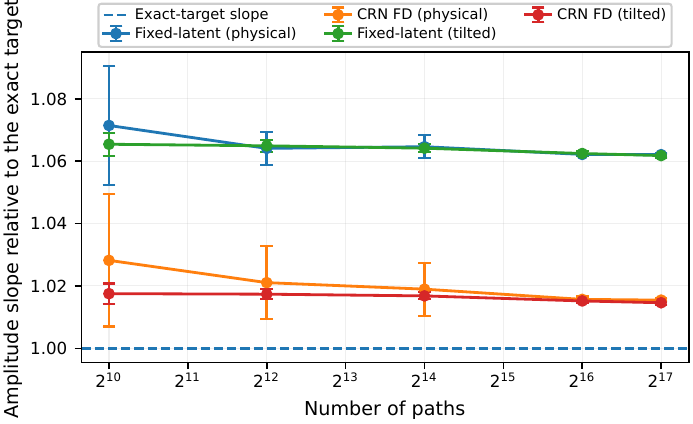}
  \caption{Calibration slope against the corrected HJB target.}
\end{subfigure}\hfill
\begin{subfigure}[t]{0.36\textwidth}
  \includegraphics[width=\linewidth]{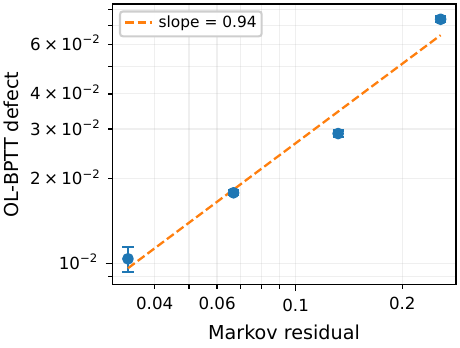}
  \caption{Fixed-latent defect against the HJB residual.}
\end{subfigure}
\caption{Population adjoint--HJB consistency audits. One is ideal in panel~(a); panel~(b) reports all-anchor residual-scaled co-decay along exact damped HJB policy iteration. Error bars show one standard deviation across three seeds.}
\label{fig:population-audits}
\end{figure}

\paragraph{Theorem-matched occupation-measure audit.}
The signed directional ratio
$\widehat\kappa_{\mathrm{dir}}:=\widehat\Xi_{\mathrm A}/\widehat D_{\mathrm A}^{\,2}$
directly targets (A1). The stronger norm-relative ratio
$\widehat\kappa_{\mathrm{occ}}:=\widehat E_{\mathrm A}/\widehat D_{\mathrm A}$
implies (A1) by Cauchy--Schwarz and supplies the sufficient route to (A2) through
$M_{\mathrm A}\le E_{\mathrm A}$. For each of four population OL-BPTT iterates, we form the actual half-step update $u^+=(u+\Acal(u))/2$ and simulate $65{,}536$ paths under its law. The design crosses three starting times, five initial factor states, and three seeds, giving $45$ path banks per iterate and $180$ in total. Both ratios use the same pathwise weighted integrals; per-bank $95\%$ upper endpoints use delta-method covariance estimates after antithetic pairing.
\begin{table}[H]
\centering
\caption{Theorem-matched occupation audit. Each entry reports the maximum point estimate over the banks at that iterate, with the maximum per-bank $95\%$ upper endpoint in parentheses. The half-step threshold is $c_\beta=0.75$.}
\label{tab:kappa-occupation-audit}
\small
\setlength{\tabcolsep}{5pt}
\begin{tabular}{rrcc}
\toprule
Population iterate $k$ & Banks
& \makecell{Max. norm ratio\\$\widehat\kappa_{\mathrm{occ}}$ (95\% U)}
& \makecell{Max. directional ratio\\$\widehat\kappa_{\mathrm{dir}}$ (95\% U)}\\
\midrule
0 & 45 & $0.073807\;(0.073936)$ & $0.066281\;(0.066341)$\\
1 & 45 & $0.065918\;(0.066073)$ & $0.054077\;(0.054178)$\\
2 & 45 & $0.063484\;(0.063667)$ & $0.050064\;(0.050190)$\\
3 & 45 & $0.064889\;(0.065111)$ & $0.048292\;(0.048454)$\\
\midrule
All & 180 & $0.073807\;(0.073936)$ & $0.066281\;(0.066341)$\\
\bottomrule
\end{tabular}
\end{table}
In \cref{tab:kappa-occupation-audit}, every point estimate and upper endpoint is below $0.75$. The largest directional upper endpoint is $0.066341$, while the largest stronger norm-relative upper endpoint is $0.073936$. Thus the direct audit supports directional improvement on the visited banks, and the stronger audit supports the sufficient route to both convergence hypotheses on the same finite design; neither is a proof over the full invariant policy class.

\subsection{Constrained deployment and switching geometry}
\label{sec:constrained-deployment}

In the one-factor box benchmark, current-policy re-evaluation lowers uniform policy RMSE from $3.483\times10^{-2}$ to $2.046\times10^{-3}$ and improves active-set recovery. In the separate 24-iteration output-construction test, averaging the pre-projection field, interpolating continuously, and projecting once reduces exact active-set error from about $4.85\%$ for the rollout-policy average to $0.007\%$ at $N_y=81$ factor grid points. \Cref{fig:box-return} shows that the returned boundary tracks the reference and avoids the staircase created by project-then-interpolate. Richardson, continuation, clamping, and full tabular diagnostics are in Appendix~\ref{app:numerical-details} (including \cref{tab:box-active}).

\begin{figure}[!htbp]
\centering
\includegraphics[width=0.74\textwidth]{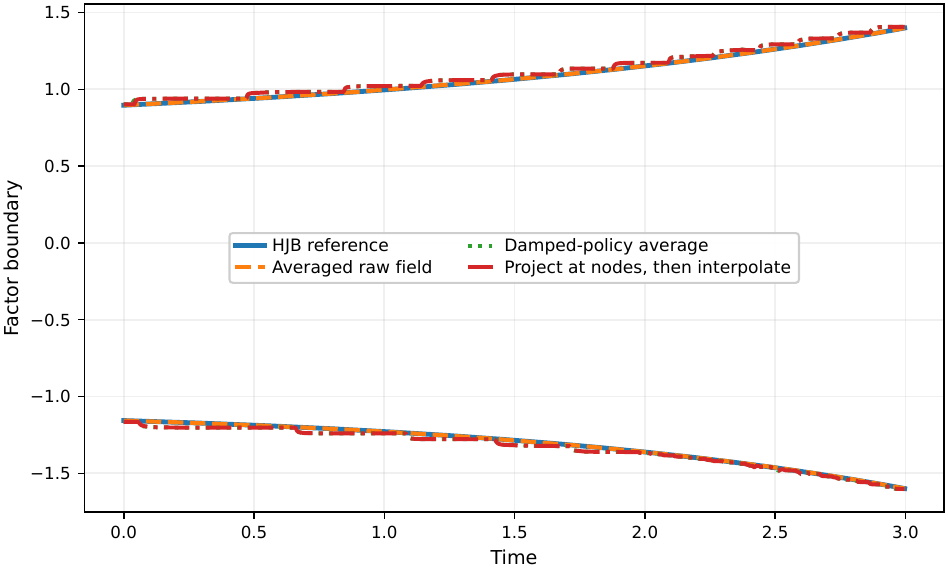}
\caption{Reference and returned active boundaries for box bounds $[-2.5,2.5]$: representative seed $260730$, Richardson with $N_y=81$, 24 outer iterations, and $\beta=1/2$. The returned policy averages the pre-projection field, interpolates continuously, and projects once.}
\label{fig:box-return}
\end{figure}

The constrained three-factor, fifty-asset benchmark uses factor/asset blocks admitting a factorized HJB reference, dense nonlinear cross-factor features, tilted reduced OL-BPTT, and a separable fifty-dimensional QP with heterogeneous box constraints. In the high-precision legacy $1/6$-broad mechanism audit (\cref{fig:flagship}), current-policy and matched pooled on-policy RMSEs are $2.775\times10^{-3}$ and $4.453\times10^{-3}$, a $37.69\%$ reduction. In the separate high-precision $50$--$50$ occupancy/broad-anchor coverage audit, re-evaluation wins under the on-policy and broad laws in all three seeds; pooling wins under the enlarged tail-only law. Appendix~\ref{app:numerical-details} separates these designs.

\begin{figure}[!htbp]
\centering
\includegraphics[width=0.56\textwidth]{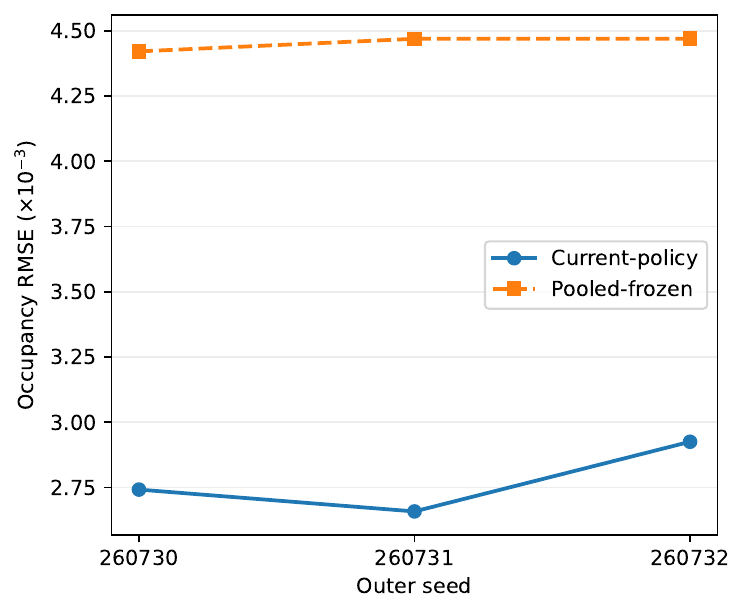}
\caption{Current-policy re-evaluation versus matched pooled refinement: on-policy RMSE in the legacy $1/6$-broad mechanism audit of the three-factor, fifty-asset constrained benchmark.}
\label{fig:flagship}
\end{figure}

Appendix~\ref{app:numerical-details} gives gain and residual profiles (\cref{fig:supp-gain-residual}), three- and five-factor structured regression (\cref{tab:multifactor}), representation-versus-estimation decomposition (\cref{tab:nonlinear}), sampling and representation ablations (\cref{fig:flagship-ablation}), and the design-mixture sweep.

\FloatBarrier
\section{Discussion and conclusion}
\label{sec:discussion}
\label{sec:conclusion}

The learned object is the continuation hedging field rather than an end-to-end many-asset portfolio network. Under CRRA preferences, the constrained action block is decoded by an explicit QP once that field is known. Re-evaluation is useful precisely when the implemented portfolio changes the continuation field: the Merton model is a negative control, while predictable returns create a genuinely policy-dependent improvement operator.

Constraints make switching geometry and coverage part of the numerical problem. Averaging the pre-projection control field and projecting only after interpolation preserves active boundaries better than averaging damped projected policies. On-policy restart points are efficient for the induced occupation law, but broad design points are needed for financially relevant tail states.

The operator hierarchy separates numerical errors from population adjoint--HJB inconsistency. The link $\AcalNum_k\to\Acal$ contains time discretization, conditional estimation, representation, continuation extension, and QP-solver error. The link $\Acal\to\Tcal$ is the population adjoint--HJB consistency problem. Shifted-adjoint cancellation shows that the second PMP adjoint need not approach the value Hessian; on regular future tubes its discrepancy is multiplied by the updated-control diffusion displacement. In the CRRA reduction, the remaining discrepancy is exactly $C(R_{\mathrm{OL}}^u-R^u)$. Global convergence requires two distinct population properties: favorable directional alignment transfers value improvement, while asymptotic stationarity compatibility excludes adjoint fixed points that are not HJB fixed points. The directional audit targets the first property directly, while the audited norm-relative condition is a stronger sufficient route that supplies both.

The limitations are correspondingly sharp. The generic consistency theorem is local and does not cross switching surfaces or verify the global stationarity-compatibility property. The theorem-matched directional and norm audits cover the visited one-factor sequence and stated starts only. Global convergence is proved for the constrained CRRA population subclass, not for a fixed biased finite-sample implementation or for dimension-free factor-state complexity.

We have therefore closed a fixed-latent adjoint-to-control map into a self-consistent constrained policy iteration, supplied a residual-based adjoint--HJB comparison and a true-value certificate, and established global population convergence in the CRRA portfolio subclass. The numerical results show when re-evaluation adds information, when it does not, and how that distinction survives in a three-factor, fifty-asset constrained problem.
\appendix

\section{Proof of the generic adjoint--HJB consistency theorem}
\label{app:supp-generic}

\paragraph{Sufficient local regularity.}
Assumption~\ref{ass:generic-decoded-score-region} follows, for example, if the filtration has the Brownian martingale-representation property; the direct-action coefficients have bounded derivatives through order two on the regular future tube; $V^u\in C^{1,3}$; the covariance is uniformly elliptic on the buffered cylinder; and the fixed-latent first-adjoint defect is a uniformly parabolic system with common scalar principal part and a local $W_p^{2,1}$ estimate for some $p>d_S+2$. For the second adjoint, assume bounded first coefficient derivatives and, for some $q>2$,
\begin{equation}
  \sup_{(t,s)\in K_1}\E_{t,s}^u\left[
  \|\D^2g(S_T^u)\|^q+
  \left(\int_t^T\|\mathcal F_P^u(r,S_r^u)\|^2\dd r\right)^{q/2}
  \right]<\infty.
  \label{eq:second-adjoint-moment-package}
\end{equation}
Finally, on $K$ let $u$, $\Tcal(u)$, and $\Acal(u)$ remain in the relative interior of one regular affine face; assume uniform strong concavity along its tangent space, bounded $b_a$ and $\D_a\sigma$, and local action-Lipschitz continuity of $\sigma$. These conditions are only sufficient and are used below to expose the three estimates entering Theorem~\ref{thm:generic-decoded-score-compatibility}.

\subsection{Fixed-policy value-gradient identification and residual bound for the fixed-latent OL-BPTT defect}
\label{app:compatibility-proof}

As in \cref{sec:compatibility-theory}, the proof is written in direct action coordinates. A moving-chart version would replace primitive state derivatives by derivatives of chart-composed coefficients at fixed latent coordinate, but the residual identity needed below is not claimed for that extension. Fix $(t,s)$ and write $S_r=S_r^{t,s,u}$. Under the assumptions of \cref{lem:closed-loop-compatibility}, the closed-loop coefficients
$b^u(r,s)=b(r,s,u(r,s))$ and $\sigma^u(r,s)=\sigma(r,s,u(r,s))$ are continuously differentiable with bounded derivatives. The stochastic flow is therefore differentiable in its initial state \citep{Kunita1990}. For a coordinate direction $e_i$, define
\begin{equation}
  \Xi_h(r)=\frac{S_r^{t,s+he_i,u}-S_r^{t,s,u}}{h}.
  \label{eq:state-difference-quotient}
\end{equation}
For some $p>1$ chosen large enough for the H\"older conjugates in the stated integrability envelope,
\begin{equation}
  \Xi_h\longrightarrow J_r^u e_i
  \quad\text{in }L^p(\Omega;C([t,T])),
  \label{eq:state-difference-quotient-convergence}
\end{equation}
where
\begin{align}
  \D_s b^u&=b_s+b_a\D_su,\\
  \D_s \sigma^{u,(j)}&=\sigma_s^{(j)}+\sigma_a^{(j)}\D_su.
\end{align}

Write $\Delta_hS_r=S_r^{t,s+he_i,u}-S_r^{t,s,u}$. The terminal difference quotient has the mean-value representation
\begin{equation}
  \frac{g(S_T+\Delta_hS_T)-g(S_T)}{h}
  =\int_0^1
  \D g(S_T+\lambda\Delta_hS_T)^\top \Xi_h(T)\dd\lambda.
  \label{eq:terminal-mean-value}
\end{equation}
The terminal family in \cref{eq:terminal-mean-value} is uniformly integrable by the envelope in \cref{lem:closed-loop-compatibility}. In the usual polynomial-growth case this follows from H\"older's inequality, positive-moment bounds for the nearby flows, and the $L^p$ bound for $\Xi_h$. For full-wealth CRRA utility, \cref{eq:crra-terminal-envelope,eq:crra-terminal-ui} give the corresponding negative-moment and normalized-tangent verification. In the factor-reduced formulation the terminal datum is constant. Equation \eqref{eq:state-difference-quotient-convergence}, continuity of $\D g$, and Vitali's theorem therefore give convergence in $L^1$ to $\D g(S_T)^\top J_T^ue_i$.

The running term is handled similarly. Since
\begin{equation}
  \D_s\ell^u=\ell_s+(\D_su)^\top\ell_a
  \label{eq:closed-loop-running-derivative}
\end{equation}
 is bounded under the assumptions of Lemma~\ref{lem:closed-loop-compatibility}, the running-payoff difference quotients are uniformly integrable on $[t,T]\times\Omega$. Fubini's theorem and the same $L^1$ argument yield
\begin{equation}
  \frac{\dd}{\dd s_i}
  \E_{t,s}\left[\int_t^T\ell^u(r,S_r)\dd r\right]
  =\E_{t,s}\left[\int_t^T
  (J_r^ue_i)^\top\D_s\ell^u(r,S_r)\dd r\right].
\end{equation}
Combining the terminal and running terms for every coordinate $i$ proves \cref{eq:closed-loop-value-gradient}.

For the fixed-latent OL-BPTT comparison, let $J_{\mathrm{FL}}$ solve the variational equation with coefficient matrices $b_s$ and $\sigma_s^{(j)}$ only, all evaluated along the same fixed-policy path. Replacing $J^u$ by $J_{\mathrm{FL}}^u$ and the total running derivative by the partial derivative $\ell_s$ gives \cref{eq:fixed-latent-target}. Subtracting the two payoff-derivative representations gives the exact decomposition \cref{eq:fixed-latent-defect}.

\paragraph{Envelope representation and residual scaling.}
Under the additional classical regularity of \cref{cor:olbptt-residual-defect}, let $p^u:=\D_sV^u$ and let $z$ be a vector field. Write $\Lcal^u z$ componentwise and define the fixed-latent first-variation operator
\begin{equation}
\begin{split}
  \mathscr M_{\mathrm{FL}}^u z
  :={}&\partial_t z+\Lcal^u z+(b_s)^\top z\\
  &+\sum_{j=1}^{d_W}
  (\sigma_s^{(j)})^\top(\D_sz)\sigma^{u,(j)}.
  \label{eq:fixed-latent-vector-operator}
\end{split}
\end{equation}
Differentiating the fixed-policy equation in the state gives
\begin{equation}
  \mathscr M_{\mathrm{FL}}^u p^u+\ell_s
  +(\D_su)^\top\left[
    \ell_a+b_a^\top p^u
    +\sum_{j=1}^{d_W}
    (\sigma_a^{(j)})^\top(\D_sp^u)\sigma^{u,(j)}
  \right]=0.
  \label{eq:markov-gradient-vector-equation}
\end{equation}
The bracket is exactly $\nabla_a\Qcal^u(t,s,u(t,s))$. By the vector Feynman--Kac formula associated with the multiplicative tangent $J_{\mathrm{FL}}^u$, the OL-BPTT target in \cref{eq:fixed-latent-target} solves
\begin{equation}
  \mathscr M_{\mathrm{FL}}^u G_{\mathrm{OL}}^u+\ell_s=0,
  \qquad G_{\mathrm{OL}}^u(T,\cdot)=\D g.
  \label{eq:olbptt-target-vector-equation}
\end{equation}
Therefore $\delta^u:=p^u-G_{\mathrm{OL}}^u$ satisfies
\begin{equation}
  \mathscr M_{\mathrm{FL}}^u\delta^u+r_{\mathrm{env}}^u=0,
  \qquad \delta^u(T,\cdot)=0.
  \label{eq:olbptt-defect-vector-equation}
\end{equation}
Applying the same vector Feynman--Kac formula proves \cref{eq:olbptt-envelope-representation}. Under \cref{eq:weighted-quadratic-advantage,eq:quadratic-score-general}, action differentiation gives $\nabla_a\Qcal^u=w^u\nabla_aq^u$. The KKT normal--tangent cancellation stated in \cref{cor:olbptt-residual-defect} then yields \cref{eq:olbptt-envelope-residual-identity}; the operator-norm estimate yields \cref{eq:olbptt-residual-scaled-bound}.

\paragraph{Full-wealth CRRA terminal envelope.}
For $U(x)=x^{1-\gamma}/(1-\gamma)$, $\gamma>1$, let $X_{T,h}^\lambda$ be the positive mean-value segment between nearby terminal wealths and let $\Xi_h^X(T)$ be the wealth component of the state-flow difference quotient. Then
\begin{equation}
  |U'(X_{T,h}^\lambda)\Xi_h^X(T)|
  =(X_{T,h}^\lambda)^{1-\gamma}
  \left|\frac{\Xi_h^X(T)}{X_{T,h}^\lambda}\right|.
  \label{eq:crra-terminal-envelope}
\end{equation}
A sufficient uniform-integrability condition is, for some $\delta>0$,
\begin{equation}
  \sup_{0<|h|\le h_0}\sup_{0\le\lambda\le1}
  \E\left[\left((X_{T,h}^\lambda)^{1-\gamma}
  \left|\frac{\Xi_h^X(T)}{X_{T,h}^\lambda}\right|\right)^{1+\delta}\right]<\infty.
  \label{eq:crra-terminal-ui}
\end{equation}
This follows from sufficiently high negative moments of nearby wealth paths and positive moments of the normalized tangent. In the factor-reduced CRRA problem the terminal datum is the constant one, so this full-wealth envelope is not needed.

\subsection{Complete proof of Theorem~\ref{thm:generic-decoded-score-compatibility}}
\label{app:decoded-score-compatibility-proof}
Let $G:=G_{\mathrm{OL}}^u$, $p^u:=\D_sV^u$, $\Gamma:=\D_{ss}^2V^u$, and $\delta:=p^u-G$. Applying It\^o--Krylov to the vector equation above and using uniqueness of the linear BSDE \cref{eq:first-adjoint-bsde} yields the Markovian identification \citep{PardouxPeng1992}:
\begin{equation}
  \lambda^u=G(t,S_t^u),\qquad
  \mathsf Z^u=(\D_sG)\sigma^u,
  \qquad
  \mathscr M_{\mathrm{FL}}^u\delta+r_{\mathrm{env}}^u=0,
  \quad \delta(T,\cdot)=0.
  \label{eq:generic-Markov-BSDE-identification-proof}
\end{equation}

\emph{Step 1: first-adjoint level and martingale coefficient.}
The envelope representation and the regular-tube residual bound imply on the buffered cylinder
$\|\delta\|_\infty+\|r_{\mathrm{env}}^u\|_\infty\le C\mathfrak r_{\mathcal D}(u)$. Applying the componentwise interior estimate to the parabolic system, with the lower-order gradient coupling absorbed by interpolation, yields
\begin{equation}
  \|\delta\|_{W_p^{2,1}(K)}\le C\bigl(\|\delta\|_{L^p(K_1)}+\|r_{\mathrm{env}}^u\|_{L^p(K_1)}\bigr),
  \qquad
  \|\D_s\delta\|_{L^\infty(K)}\le C\mathfrak r_{\mathcal D}(u).
  \label{eq:generic-vector-Wp-estimate}
\end{equation}
Since $\mathsf Z^u-\Gamma\sigma^u=-(\D_s\delta)\sigma^u$, boundedness of $\sigma^u$ gives
\begin{equation}
  \|\lambda^u-p^u\|_{L^\infty(K)}+
  \|\mathsf Z^u-\Gamma\sigma^u\|_{L^\infty(K)}
  \le C_Z\mathfrak r_{\mathcal D}(u).
  \label{eq:generic-lambda-Z-residual}
\end{equation}

\emph{Step 2: second-adjoint bound.}
The restarted matrix adjoint is the linear BSDE in \cref{eq:second-adjoint-bsde}, with coefficients and source defined in \cref{eq:adjoint-coefficient-definitions,eq:second-adjoint-driver-definition}. It\^o's formula for $e^{\alpha r}\|P_r^u\|_F^2$, Young's inequality, and \cref{eq:second-adjoint-moment-package} give a deterministic restart bound
\begin{equation}
  \|P^u\|_{L^\infty(K)}+\|P^u-\Gamma\|_{L^\infty(K)}\le B_{P\Gamma,K}<\infty,
  \label{eq:generic-P-Gamma-local-bound}
\end{equation}
with no smallness claim in the policy residual.

\emph{Step 3: candidate displacement.}
Let $\Pi_L$ project onto the tangent space of the common face. At $a=u$, Proposition~\ref{prop:shifted-gradient-cancellation} and \cref{eq:generic-lambda-Z-residual} give
$\|\Pi_L[\nabla_a\Hgen_{\mathrm A}^u(u)-\nabla_a\Qcal^u(u)]\|\le C\mathfrak r_{\mathcal D}(u)$.
Tangential KKT stationarity at $\Tcal(u)$ and Lipschitz continuity of the HJB gradient give
$\|\Pi_L\nabla_a\Qcal^u(u)\|\le L\mathfrak r_{\mathcal D}(u)$.
Strong concavity of the adjoint Hamiltonian and stationarity at $\Acal(u)$ therefore imply
\begin{equation}
  \|\Acal(u)-u\|_{L^\infty(K)}\le C_{\mathrm{step}}\mathfrak r_{\mathcal D}(u).
  \label{eq:generic-candidate-displacement}
\end{equation}

\emph{Step 4: Hamiltonian and operator discrepancy.}
Let $L_\sigma$ be the local action-Lipschitz constant of $\sigma$. Evaluate \cref{eq:shifted-gradient-cancellation} at $a=\Acal(u)$. The first-adjoint blocks are controlled by \cref{eq:generic-lambda-Z-residual}; for the second-adjoint block,
\[
  \|(P^u-\Gamma)[\sigma(\Acal(u))-\sigma(u)]\|_{L^\infty(K)}
  \le B_{P\Gamma,K}L_\sigma C_{\mathrm{step}}\mathfrak r_{\mathcal D}(u).
\]
This proves \cref{eq:generic-decoded-score-residual}. Finally, tangential stationarity at $\Acal(u)$ and $\Tcal(u)$ and $\mu_{\mathrm M}$-strong concavity of the HJB action Hamiltonian give
\[
  \mu_{\mathrm M}\|\Acal(u)-\Tcal(u)\|
  \le \|\nabla_a\Qcal^u(\Acal(u))-\nabla_a\Hgen_{\mathrm A}^u(\Acal(u))\|,
\]
which proves \cref{eq:generic-map-distance-residual}. If the adjoint and HJB Hamiltonians are divided by a common positive action-independent normalization, the same proof applies without a normalizer-mismatch term, as in the CRRA reduction.

\section{Weighted comparison, verification, and global CRRA convergence}
\label{app:crra-global-proof}

\subsection{Weighted comparison and terminal attachment}
\label{app:bellman-identification}
Throughout this subsection, $A_{\max}:=\sup_{a\in\PortSet}|a|$ and $M_Z\succ0$ is a Lyapunov matrix for the positive stable $K_Z$:
\begin{equation}
  K_Z^\top M_Z+M_ZK_Z\succeq2c_ZI,\qquad c_Z>0.
  \label{eq:lyapunov-matrix}
\end{equation}
Define
\begin{equation}
  \rho_Z(z):=(1+z^\top M_Zz)^{1/2},\qquad
  \Psi_{\hat\eta}(z):=e^{\hat\eta\rho_Z(z)},\qquad
  \Lcal_Z^a:=\bigl[K_Z(\bar z-z)+\chi C^\top a\bigr]^\top\D_z
  +\tfrac12\tr(\Lambda\Lambda^\top\D_{zz}^2).
  \label{eq:barrier-weight}
\end{equation}
Elementary differentiation gives
\begin{equation}
  \D\Psi_{\hat\eta}=\hat\eta\Psi_{\hat\eta}\D\rho_Z,
  \qquad
  \D^2\Psi_{\hat\eta}=\Psi_{\hat\eta}
  \bigl(\hat\eta\D^2\rho_Z+\hat\eta^2\D\rho_Z\otimes\D\rho_Z\bigr).
  \label{eq:barrier-derivatives}
\end{equation}

\begin{lemma}[Lyapunov barrier supersolution]
\label{lem:barrier-supersolution}
There is $c_\flat>0$, depending only on $K_Z$ and $M_Z$, such that for every $\hat\eta>\eta_*:=|\chi|C_1/c_\flat$ some finite $\lambda(\hat\eta)$ satisfies
\begin{equation}
  \sup_{a\in\PortSet}\left[
  \Psi_{\hat\eta}^{-1}\Lcal_Z^a\Psi_{\hat\eta}(z)+\chi g(z,a)
  \right]\le\lambda(\hat\eta),\qquad z\in\R^{d_z}.
  \label{eq:barrier-inequality}
\end{equation}
\end{lemma}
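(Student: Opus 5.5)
We compute $\Psi_{\hat\eta}^{-1}\Lcal_Z^a\Psi_{\hat\eta}$ directly from the derivative formulas in \cref{eq:barrier-derivatives} and bound each term using the structure of $\rho_Z$. Dividing by $\Psi_{\hat\eta}$ gives
\[
  \Psi_{\hat\eta}^{-1}\Lcal_Z^a\Psi_{\hat\eta}
  =\hat\eta\bigl[K_Z(\bar z-z)+\chi C^\top a\bigr]^\top\D\rho_Z
  +\tfrac12\hat\eta\tr(\Lambda\Lambda^\top\D^2\rho_Z)
  +\tfrac12\hat\eta^2\,\D\rho_Z^\top\Lambda\Lambda^\top\D\rho_Z .
\]
Since $\D\rho_Z=M_Zz/\rho_Z$, we have $|\D\rho_Z|\le C_M$ uniformly, and $\D^2\rho_Z=M_Z/\rho_Z-(M_Zz)(M_Zz)^\top/\rho_Z^3$ is bounded. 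The second and third terms, together with the control part $\hat\eta\chi a^\top C\D\rho_Z$ (bounded by $\hat\eta|\chi|\,\|C\|A_{\max}C_M$) and the constant drift part $\hat\eta\bar z^\top K_Z^\top\D\rho_Z$, are therefore bounded by a constant depending on $\hat\eta$ but not on $z$ or $a$.

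The key term is the confining drift $-\hat\eta\,(K_Zz)^\top M_Zz/\rho_Z$. Lyapunov positivity \cref{eq:lyapunov-matrix} gives $z^\top M_ZK_Zz=\tfrac12 z^\top(K_Z^\top M_Z+M_ZK_Z)z\ge c_Z|z|^2$. Because $\rho_Z\le(1+\|M_Z\|\,|z|^2)^{1/2}\le 1+\|M_Z\|^{1/2}|z|$, we get
\[
  \frac{c_Z|z|^2}{\rho_Z}\ge c_\flat|z|-c',
  \qquad c_\flat:=c_Z\|M_Z\|^{-1/2},
\]
where $c'$ depends only on $K_Z$ and $M_Z$. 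This can be checked, for example, by separating $|z|\le1$ from $|z|>1$ and using $|z|^2/(1+\|M_Z\|^{1/2}|z|)\ge \|M_Z\|^{-1/2}(|z|-\|M_Z\|^{-1/2})$. The drift term is thus at most $-\hat\eta c_\flat|z|+\hat\eta c'$.

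For the potential, $\chi<0$ and \cref{eq:hamiltonian-growth} give $\chi g(z,a)\le|\chi|\sup_a|g|\le|\chi|(C_0+C_1|z|)$. Collecting terms, the bracket in \cref{eq:barrier-inequality} is bounded by $-(\hat\eta c_\flat-|\chi|C_1)|z|+K(\hat\eta)$, uniformly in $a\in\PortSet$. When $\hat\eta>\eta_*=|\chi|C_1/c_\flat$, the coefficient of $|z|$ is negative, so the supremum over $z$ is finite. We then take $\lambda(\hat\eta):=K(\hat\eta)$.

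The main obstacle is obtaining the linear lower bound on the drift with a constant $c_\flat$ that is independent of $\hat\eta$. This forces the drift contribution to be handled through the normalization by $\rho_Z$ (which grows only linearly), and the bounded quadratic term $\hat\eta^2|\Lambda^\top\D\rho_Z|^2$ must not be allowed to compete with the drift. Since $\D\rho_Z$ is bounded, that term is only a constant, and no further balancing is needed.
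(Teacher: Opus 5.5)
Your proof is correct and follows the paper's argument. You bound the diffusion and bounded-control parts of $\Psi_{\hat\eta}^{-1}\Lcal_Z^a\Psi_{\hat\eta}$ uniformly in $(z,a)$, extract a linear dissipation $-\hat\eta c_\flat|z|$ from the Lyapunov inequality together with $\rho_Z\le 1+\|M_Z\|^{1/2}|z|$, and let it dominate the linear growth $|\chi|C_1|z|$ of the potential once $\hat\eta>\eta_*$. Your constant $c_\flat=c_Z\|M_Z\|^{-1/2}$ holds for all $z$ and is slightly larger than the paper's $c_Z/[2(1+\|M_Z\|^{1/2})]$, which only lowers the threshold $\eta_*$.
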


\begin{proof}
The diffusion and bounded-control terms satisfy, uniformly in $z$ and $a$,
\begin{align*}
  \tfrac12\Psi^{-1}\tr(\Lambda\Lambda^\top\D^2\Psi)
  &\le \|\Lambda\|_F^2\bigl(\hat\eta\|M_Z\|+\tfrac12\hat\eta^2\|M_Z\|\bigr),\\
  \Psi^{-1}(\chi C^\top a)^\top\D\Psi
  &\le |\chi|A_{\max}\|C\|\hat\eta\|M_Z\|^{1/2}.
\end{align*}
Moreover,
$K_Z(\bar z-z)^\top M_Zz\le C|z|-c_Z|z|^2$. Since
$\rho_Z(z)\le1+\|M_Z\|^{1/2}|z|$, there is
$c_\flat=c_Z/[2(1+\|M_Z\|^{1/2})]>0$ such that, for $|z|\ge1$,
\begin{equation}
  \Psi^{-1}K_Z(\bar z-z)^\top\D\Psi
  \le\hat\eta(C_{\mathrm{ou}}-c_\flat|z|).
  \label{eq:ou-dissipation}
\end{equation}
Finally, \cref{eq:hamiltonian-growth} gives
$\chi g(z,a)\le|\chi|(C_0+C_1|z|)$. Hence the coefficient of $|z|$ is
$|\chi|C_1-\hat\eta c_\flat<0$ when $\hat\eta>\eta_*$, and the expression is bounded on the remaining compact region.
\end{proof}

\begin{proof}[Proof of Proposition~\ref{prop:crra-analytic-closure}]
\emph{Weighted comparison.}
Let $W=F_1-F_2$, with common terminal value zero, and select the a.e.-defined measurable maximizer
\[
  \widehat a(t,z)=\mathcal G[F_2](t,z)
  =\Pi_{\PortSet}^\Sigma\!\left(\frac1\gamma\Sigma^{-1}
  [\mu(z)+C\D_z\log F_2(t,z)]\right).
\]
It is bounded because it is $\PortSet$-valued. For $i=1,2$, set $\phi_i(a):=g(z,a)F_i(t,z)+a^\top C\D_zF_i(t,z)$. At the maximizer of the second Hamiltonian,
$\sup_a\phi_1(a)-\sup_a\phi_2(a)\ge\phi_1(\widehat a)-\phi_2(\widehat a)$; because $\chi<0$, this gives
\begin{equation}
  \Hgen_{\mathrm B}(z,F_1,\D_zF_1)
  -\Hgen_{\mathrm B}(z,F_2,\D_zF_2)
  \le\chi\left[g(z,\widehat a)W
  +\widehat a^{\,\top}C\D_zW\right]
  \qquad\text{a.e.}
  \label{eq:selector-linearization}
\end{equation}
Subtracting the two HJB equations therefore gives the strong linear inequality
\begin{equation}
  \partial_tW+\Lcal_Z^{\widehat a}W+\widetilde cW\ge0,
  \qquad
  \widetilde c(t,z):=\chi g(z,\widehat a(t,z)),
  \quad |\widetilde c|\le|\chi|(C_0+C_1|z|).
  \label{eq:linearized-inequality}
\end{equation}
Choose
$\hat\eta>\max\{\eta_*,\eta_0/\lambda_{\min}(M_Z)^{1/2}\}$ and
$\lambda\ge\lambda(\hat\eta)$, and put
\begin{equation}
  \varphi_\varepsilon=W-\varepsilon e^{\lambda(T-t)}\Psi_{\hat\eta}.
  \label{eq:comparison-barrier}
\end{equation}
The exponent separation makes $\varphi_\varepsilon\to-\infty$ uniformly as $|z|\to\infty$. The barrier lemma also gives
\[
  \partial_t(e^{\lambda(T-t)}\Psi)+\Lcal_Z^{\widehat a}(e^{\lambda(T-t)}\Psi)
  +\widetilde c e^{\lambda(T-t)}\Psi\le0,
\]
so $\varphi_\varepsilon$ satisfies \cref{eq:linearized-inequality} and is negative at $t=T$.

If $\sup\varphi_\varepsilon>0$, spatial decay localizes its positive maximum to a bounded cylinder $[0,T]\times B_R$, where $B_R:=\{z\in\R^{d_z}:|z|<R\}$. Since $\varphi_\varepsilon$ is continuous and strictly negative at $t=T$, some $\delta>0$ satisfies $\varphi_\varepsilon\le0$ on $[T-\delta,T]\times B_R$. It therefore suffices to work on $[0,T-\delta]\times B_R$. Reverse time, $s=T-\delta-t$, and write
$\psi(s,z)=\varphi_\varepsilon(T-\delta-s,z)$. After the substitution
$\psi=e^{\Lambda_Rs}\widetilde\psi$, with
$\Lambda_R\ge\sup\widetilde c^+$ on the cylinder, the parabolic
Alexandrov--Bakelman--Pucci--Krylov maximum principle for
$W_p^{2,1}$ strong solutions, $p>d_z+2$ \citep{Krylov1987,Lieberman1996}, contradicts a positive interior maximum. Thus
$\varphi_\varepsilon\le0$; letting $\varepsilon\downarrow0$ gives
$F_1\le F_2$, and interchanging them gives equality.

\emph{Uniform terminal attachment.}
The fixed-policy Feynman--Kac representation is
\begin{equation}
  F^u(t,z)=\E_{t,z}^{Q_u}\left[
  \exp\left\{\chi\int_t^Tg(Z_s,u_s)\dd s\right\}\right],
  \label{eq:fixed-policy-fk-global}
\end{equation}
where under $Q_u$ the factor drift is
$K_Z(\bar z-Z)+\chi C^\top u$. Variation of constants writes $Z$ as a uniformly bounded pathwise drift term plus a Gaussian stochastic convolution. The Dambis--Dubins--Schwarz representation and the Brownian reflection bound therefore imply, for every finite $c$ and $R$,
\begin{equation}
  \sup_{u:\,[0,T]\times\R^{d_z}\to\PortSet\ \mathrm{Borel}}
  \sup_{|z|\le R}\E_{t,z}^{Q_u}\exp\left\{c\sup_{t\le s\le T}|Z_s|\right\}<\infty.
  \label{eq:uniform-exponential-moments-fixed}
\end{equation}
If $A=\chi\int_t^Tg(Z_s,u_s)\dd s$, then
$|A|\le|\chi|(T-t)(C_0+C_1\sup|Z_s|)$. The inequality
$|e^A-1|\le|A|e^{|A|}$ and \cref{eq:uniform-exponential-moments-fixed} yield
$|F^u(t,z)-1|\le C_R(T-t)$ uniformly over bounded Borel Markov $u$ and $|z|\le R$, proving \cref{eq:terminal-attachment}.
\end{proof}

\subsection{Exact policy iteration and reduced verification}
\label{app:crra-exact-global-proof}
\begin{proof}[Completion of the proof of Theorem~\ref{thm:crra-global-convergence}]
\emph{Monotone limit and compactness.}
Exact value improvement and $V^u=x^\chi F^u/\chi<0$ give
$V^{u^k}\uparrow\overline V\le0$ and
$F^{u^k}\downarrow\overline F>0$. The policy-uniform
$W_p^{2,1}$ estimates in Proposition~\ref{prop:crra-fixed-policy-regularity}, together with the compact embedding \cref{eq:crra-compact-embedding}, give local precompactness. Monotone pointwise convergence identifies the function component of every convergent subsequence with the same limit; hence both full sequences $F^{u^k}$ and $\D_zF^{u^k}$ converge locally in the stated parabolic H\"older topology, and
$\mathcal G[F^{u^k}]\to\mathcal G[\overline F]$ locally uniformly.

\emph{Residual decay.}
The greedy fields and damped policies have a common local modulus. Telescoping the exact improvement bound makes the weighted occupation norm of
$\Tcal(u^k)-u^k$ summable. If the residual failed to vanish uniformly on a compact cylinder, equicontinuity would give a common space--time neighborhood where its squared norm exceeds $\varepsilon^2/4$. The short-time occupation bound \cref{eq:crra-scale-coverage} would then force a fixed positive one-step value increment at the corresponding moving start, contradicting locally uniform convergence of the monotone values. Hence
\begin{equation}
  \|\Tcal(u^k)-u^k\|_{L^\infty(K_{\tau,R})}\to0,
  \qquad u^k\to\mathcal G[\overline F]
  \quad\text{locally uniformly}.
  \label{eq:crra-exact-proof-residual}
\end{equation}
Strong convergence of $F^{u^k},\D_zF^{u^k}$, weak local compactness of the second spatial and first time derivatives, and policy convergence permit passage to \cref{eq:crra-global-fixed-policy}; thus $\overline F$ solves \cref{eq:crra-global-hjb}. Proposition~\ref{prop:crra-analytic-closure} attaches the terminal condition and makes this solution unique.

\emph{Reduction and integrability.}
For any progressively measurable $\PortSet$-valued control $\nu$, It\^o's formula for $\log X^\nu$ and the CRRA exponential martingale give
\begin{equation}
  \E_{t,x,z}[U(X_T^\nu)]
  =\frac{x^\chi}{\chi}\E_{t,z}^{Q_\nu}
  \exp\left\{\chi\int_t^Tg(Z_s,\nu_s)\dd s\right\}
  =:\frac{x^\chi}{\chi}F^\nu(t,z),
  \label{eq:reduced-control-representation}
\end{equation}
where under $Q_\nu$ the factor drift is
$K_Z(\bar z-Z)+\chi C^\top\nu$. Boundedness of $\nu$ makes Novikov immediate, and variation of constants yields
\begin{equation}
  \sup_\nu\E_{t,z}^{Q_\nu}\exp\left\{c\sup_{t\le s\le T}|Z_s|\right\}<\infty
  \qquad(c<\infty).
  \label{eq:uniform-exponential-moments}
\end{equation}

\emph{It\^o--Krylov verification.}
Let
$Y_s=\overline F(s,Z_s)\exp\{\chi\int_t^sg(Z_r,\nu_r)\dd r\}$ and, for $n$ sufficiently large that $T-1/n>t$, set
\[
  \tau_n:=\inf\{s\ge t:\ |Z_s|\ge n\}\wedge(T-1/n).
\]
Because $\overline F\in W_{p,\mathrm{loc}}^{2,1}([0,T)\times\R^{d_z})$, It\^o--Krylov applies up to $s\wedge\tau_n$, and
\[
  \dd Y_s=e^{\chi\int_t^sg\dd r}
  [\partial_t\overline F+\Lcal_Z^{\nu_s}\overline F+\chi g(Z_s,\nu_s)\overline F]\dd s
  +\dd(\text{local martingale}).
\]
Since $\chi<0$, the HJB equation makes the drift nonnegative. Because
$\overline F\in\mathcal C_{\eta_0}$, some $C_{\overline F}<\infty$ satisfies
\begin{equation}
  |Y_{s\wedge\tau_n}|
  \le C_{\overline F}\exp\!\left\{
  \eta_0\langle Z_{s\wedge\tau_n}\rangle
  +|\chi|\int_t^{s\wedge\tau_n}(C_0+C_1|Z_r|)\dd r
  \right\}.
  \label{eq:verification-ui-envelope}
\end{equation}
The exponential-moment bound \cref{eq:uniform-exponential-moments} therefore makes the stopped family uniformly integrable. Letting $n\to\infty$, using continuity of $\overline F$ on $[0,T]\times\R^{d_z}$ and the terminal condition, removes both the spatial and terminal stopping and gives
$\overline F(t,z)\le F^\nu(t,z)$. Multiplication by
$x^\chi/\chi<0$ proves the verification inequality.

For $u^\star=\mathcal G[\overline F]$, the pointwise supremum is attained and the drift vanishes. Its factor dynamics are a bounded drift perturbation of the Gaussian OU law. With bounded kernel
$\theta(t,z)=\Lambda^\top(\Lambda\Lambda^\top)^{-1}\chi C^\top u^\star(t,z)$,
Girsanov gives weak existence, and removing the same kernel from any weak solution gives uniqueness in law \citep{KaratzasShreve1991}. Equality follows, so $\overline F=F^\star$, $u^\star$ is optimal, and the theorem follows.
\end{proof}

\subsection{Population OL-BPTT policy iteration}
\label{app:crra-adjoint-global-proof}
\begin{proof}[Completion of the proof of Theorem~\ref{thm:crra-adjoint-global}]
Write
$F_k=F^{u^k}$, $G_k=\Tcal(u^k)=\mathcal G[F_k]$, and
$A_k=\Acal(u^k)$. For a fixed starting state let
$D_k,E_k,R_k,M_k$ denote the weighted path--time quantities in
\cref{eq:occupancy-adjoint-movement,eq:occupancy-adjoint-defect,eq:occupancy-markov-residual,eq:occupancy-map-distance}. The directional condition and Corollary~\ref{cor:occupancy-relative-adjoint-step} give
\begin{equation}
  V^{u^{k+1}}-V^{u^k}
  \ge c_{\mathrm A}D_k^2,
  \qquad
  c_{\mathrm A}=\underline\beta(c_{\overline\beta}-\kappa_{\mathrm{dir}})>0.
  \label{eq:crra-adjoint-proof-directional-increment}
\end{equation}
Thus the values increase and $\sum_kD_k^2<\infty$ for every fixed start. Since $\chi<0$, the factors $F_k$ decrease. Proposition~\ref{prop:crra-fixed-policy-regularity}, positivity, and the common local modulus imply that $F_k$ converges locally uniformly to a positive $\overline F$. Hence the consecutive value increments vanish locally uniformly. Applying \cref{eq:crra-adjoint-proof-directional-increment} with initial wealth one yields, for every compact cylinder $K_{\tau,R}$,
\begin{equation}
  \sup_{(t,z)\in K_{\tau,R}}
  D_{\mathrm A}^{u^k,\beta_k}(t,1,z)\longrightarrow0.
  \label{eq:crra-adjoint-proof-movement-starts}
\end{equation}
The moving-start condition \cref{eq:crra-stationarity-compatibility} then implies
\begin{equation}
  \sup_{(t,z)\in K_{\tau,R}}
  M_{\mathrm A}^{u^k,\beta_k}(t,1,z)\longrightarrow0.
  \label{eq:crra-adjoint-proof-map-starts}
\end{equation}
Indeed, otherwise a subsequence of moving starts with $M_k$ bounded away from zero would contradict \cref{eq:crra-adjoint-proof-movement-starts,eq:crra-stationarity-compatibility}.

Fixed-policy compactness also yields
\begin{equation}
  G_k=\mathcal G[F_k]\to\mathcal G[\overline F]
  \quad\text{locally uniformly}.
  \label{eq:crra-adjoint-proof-markov-limit}
\end{equation}
Proposition~\ref{prop:crra-fixed-policy-regularity} and the damped recursion give common local moduli for $A_k-u^k$ and $A_k-G_k$. Suppose $A_k-u^k$ failed to vanish on $K_{\tau,R}$. There would be $\varepsilon>0$ and moving points $(t_k,z_k)$ at which its $\gamma\Sigma$-norm is at least $\varepsilon$. Equicontinuity gives fixed radii and a short time window on which the squared movement remains at least $\varepsilon^2/4$. Applying \cref{eq:crra-scale-coverage} under the actual updated-policy law gives a fixed positive lower bound for the corresponding $D_k^2$, contradicting \cref{eq:crra-adjoint-proof-movement-starts}. Therefore
\begin{equation}
  \|A_k-u^k\|_{L^\infty(K_{\tau,R})}\to0.
  \label{eq:crra-adjoint-proof-adjoint-movement}
\end{equation}
The identical moving-restart argument, now using \cref{eq:crra-adjoint-proof-map-starts}, gives
\begin{equation}
  \|A_k-G_k\|_{L^\infty(K_{\tau,R})}\to0.
  \label{eq:crra-adjoint-proof-map-distance}
\end{equation}
Combining \cref{eq:crra-adjoint-proof-adjoint-movement,eq:crra-adjoint-proof-map-distance} with the triangle inequality yields
\begin{equation}
  \|G_k-u^k\|_{L^\infty(K_{\tau,R})}\to0.
  \label{eq:crra-adjoint-proof-markov-residual}
\end{equation}

Combining \cref{eq:crra-adjoint-proof-markov-limit,eq:crra-adjoint-proof-markov-residual} gives
$u^k\to\overline u:=\mathcal G[\overline F]$. Proposition~\ref{prop:crra-analytic-closure} attaches the terminal datum. Strong convergence of the factors and first derivatives, weak compactness of the remaining derivatives, and policy convergence permit passage to the fixed-policy equation, so $\overline F$ is a strong reduced HJB solution. Weighted comparison and the verification above identify
$(\overline F,\overline u)=(F^\star,u^\star)$, completing the proof.
\end{proof}

\section{Supplementary numerical diagnostics}
\label{app:numerical-details}

\subsection{Gain and residual-profile diagnostics}

A deterministic fixed-policy PDE line profile along the reconstructed second-update direction places the best update near one half (\cref{fig:gain-profile}). This motivates the fixed gain $\beta=1/2$ in the damped experiments; the smooth one-factor diagnostic in Section~\ref{sec:when-iterate} instead retains its sampled log-CE acceptance test. The exact local HJB Hamiltonian favors a full step, whereas deterministic field and finite QP-tolerance error can produce an interior maximizer of the approximate certificate. The normalized residual decay in \cref{fig:codecay-normalized} complements the log--log panel in \cref{fig:population-audits}.

\begin{figure}[!htbp]
\centering
\begin{subfigure}[t]{0.49\textwidth}
  \includegraphics[width=\linewidth]{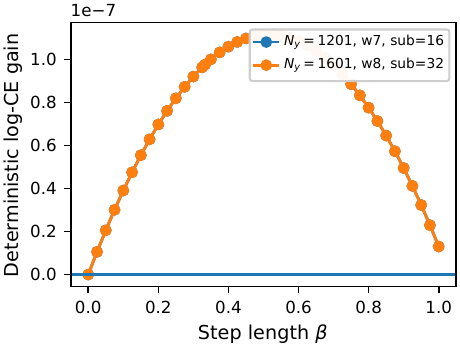}
  \caption{Deterministic line profile along the second-update direction.}
  \label{fig:gain-profile}
\end{subfigure}\hfill
\begin{subfigure}[t]{0.49\textwidth}
  \includegraphics[width=\linewidth]{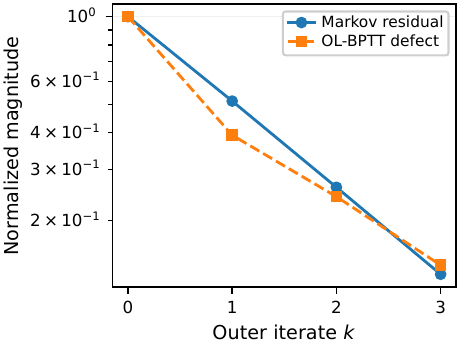}
  \caption{Normalized all-anchor defect and residual.}
  \label{fig:codecay-normalized}
\end{subfigure}
\caption{Additional gain and residual-profile diagnostics. In panel~(a), w7/w8 denote factor-domain half-widths of seven/eight stationary standard deviations, and sub=16/32 denote PDE substeps per policy interval; the grids have $N_y=1201/1601$.}
\label{fig:supp-gain-residual}
\end{figure}

\subsection{One-factor numerical integrity and output construction}

The wealth-curvature ratio is correct to machine precision, while the mixed continuation ratio has first-order time bias. With $N_t$ time intervals, coupled Richardson reduces the $N_t=128$ operator bias from about $1.30\times10^{-4}$ to $2.36\times10^{-5}$ without measurable variance inflation. Cubic time interpolation, exact terminal closure, and structure-compatible tail extrapolation remove a false fixed point caused by factor clamping; under the same protocol, clamping raises uniform policy RMSE from $2.05\times10^{-3}$ to $7.35\times10^{-2}$. With those corrections, the smooth benchmark reaches native-grid RMSE $1.24\times10^{-5}$ after 40 outer iterations with $\beta=1/2$ and adaptive post-plateau averaging.

\begin{table}[!htbp]
\centering
\caption{One-factor box-constrained updates. First block: separate 40-iteration runs with $\beta=1/2$. The RMSE row compares the all-update average of feasible frozen-policy updates with the adaptive post-plateau average of damped current-policy iterates; the active-set and boundary rows compare the one-shot update with that output. Second block: separate 24-iteration trajectories at $N_y=81$, with post-plateau raw-field averaging, interpolation, and final projection; the columns compare standard and Richardson-corrected post-processing. Active-set errors without \% are fractions.}
\label{tab:box-active}
\footnotesize
\setlength{\tabcolsep}{3pt}
\begin{tabular}{@{}>{\raggedright\arraybackslash}p{0.58\textwidth}rr@{}}
\toprule
Metric & Baseline & Corrected \ipgdpo\\
\midrule
\multicolumn{3}{@{}l}{\emph{Iterative mechanism: frozen/one-shot baseline versus current-policy update}}\\
Uniform policy RMSE & $3.483\times10^{-2}$ & $2.046\times10^{-3}$\\
Active-set error & $1.508\%$ & $0.730\%$\\
Boundary RMSE sum & $1.568\times10^{-1}$ & $8.051\times10^{-2}$\\
\midrule
\multicolumn{3}{@{}l}{\emph{Output-policy construction: standard versus Richardson post-processing}}\\
Uniform RMSE & $1.856\times10^{-3}$ & $1.868\times10^{-4}$\\
On-policy RMSE & $1.962\times10^{-3}$ & $2.835\times10^{-4}$\\
Exact active-set error & $6.35\times10^{-4}$ & $6.99\times10^{-5}$\\
Boundary RMSE sum & $6.10\times10^{-3}$ & $9.49\times10^{-4}$\\
\bottomrule
\end{tabular}
\end{table}

\subsection{Multifactor scaling and representation regimes}

The unconstrained three- and five-factor tests use a matrix-Riccati reference, retained under the nonlinear observed-coordinate transformation. Both \cref{tab:multifactor,tab:nonlinear} use $\beta=1/2$ and post-plateau averaging of pre-projection fields. In \cref{tab:multifactor}, the field is represented at nine time knots by affine regression, followed by a fifty-dimensional QP; high-precision and matched-cost runs use 12 and 8 outer iterations, respectively. Re-evaluation lowers one-shot error and beats matched frozen-policy averaging. Richardson is preferable when time bias dominates; single-grid $N_t=64$ OL-BPTT is more efficient when multifactor regression noise dominates.

\begin{table}[!htbp]
\centering
\caption{Non-tabular structured-regression \ipgdpo with fifty assets. Errors are policy RMSEs under the on-policy law; coefficients count fitted adjoint-field coefficients.}
\label{tab:multifactor}
\small
\begin{tabular}{rrrrrr}
\toprule
Factors & \makecell{Adjoint-field\\coefficients} & \makecell{High-precision\\one-shot} & \makecell{High-precision\\returned} & \makecell{Matched-cost\\current} & \makecell{Matched-cost\\frozen}\\
\midrule
3 & $108$ & $4.347\times10^{-3}$ & $1.208\times10^{-3}$ & $1.636\times10^{-3}$ & $4.456\times10^{-3}$\\
5 & $270$ & $7.433\times10^{-3}$ & $1.846\times10^{-3}$ & $2.374\times10^{-3}$ & $6.167\times10^{-3}$\\
\bottomrule
\end{tabular}
\end{table}

In \cref{tab:nonlinear}, latent OU factors are observed through a smooth nonlinear coordinate transform, using 12 outer iterations. A noiseless-target ridge fit on a fixed 512-anchor bank gives the oracle-fit benchmark under the same features and regularization. Quadratic regression returns near this error, so iteration cannot repair its dominant misspecification. Fixed tanh features reduce the oracle-fit error, and re-evaluation lowers one-shot error by roughly two thirds. Reduction is $100\times(1-\mathrm{returned\ RMSE}/\mathrm{one\mbox{-}shot\ RMSE})$; negative values indicate worsening.

\begin{table}[!htbp]
\centering
\caption{Nonlinear observed-factor benchmark with fifty assets. RMSEs use the on-policy law; parameters count fitted adjoint-field coefficients.}
\label{tab:nonlinear}
\footnotesize
\setlength{\tabcolsep}{4pt}
\begin{tabular}{llrrrr}
\toprule
Factors & Representation & Parameters & \makecell{Oracle-fit\\RMSE} & \makecell{Returned\\RMSE} & \makecell{One-shot\\reduction}\\
\midrule
3 & Quadratic & 270 & $5.658\times10^{-3}$ & $5.852\times10^{-3}$ & $-1.4\%$\\
3 & Fixed tanh features & $1{,}080$ & $4.539\times10^{-5}$ & $1.264\times10^{-3}$ & $68.1\%$\\
5 & Quadratic & 945 & $7.499\times10^{-3}$ & $8.256\times10^{-3}$ & $1.7\%$\\
5 & Fixed tanh features & $2{,}970$ & $6.223\times10^{-5}$ & $2.089\times10^{-3}$ & $67.7\%$\\
\bottomrule
\end{tabular}
\end{table}

\subsection{Flagship ablations and coverage}

Sampling and representation ablations use the legacy $1/6$-broad mechanism audit (\cref{fig:flagship-ablation}).

\begin{figure}[!htbp]
\centering
\includegraphics[width=0.60\textwidth]{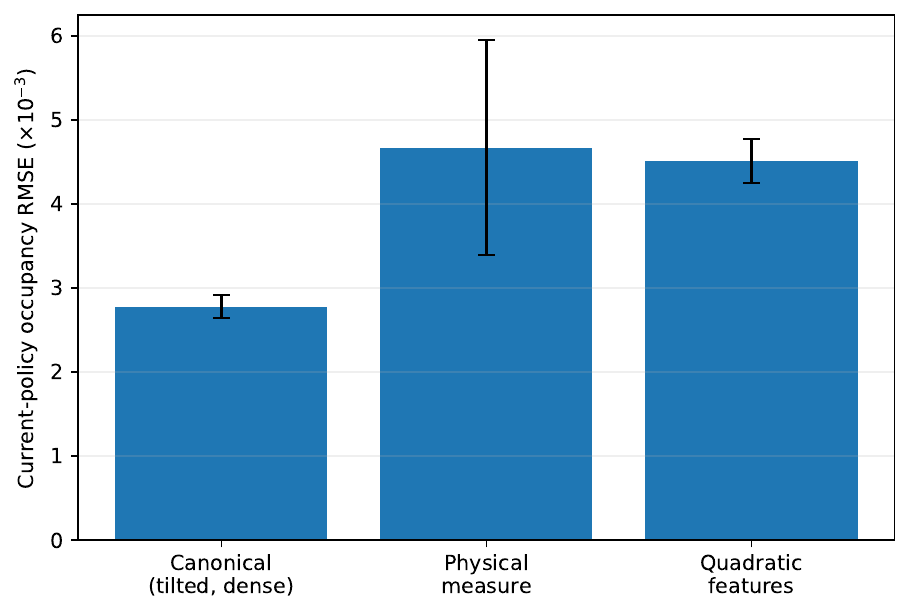}
\caption{Sampling and representation ablations: on-policy RMSE in the legacy $1/6$-broad mechanism audit. Error bars show one standard deviation across three seeds.}
\label{fig:flagship-ablation}
\end{figure}

The broad fraction specifies the share of restart-point anchors drawn from the broad stationary factor law; the remaining anchors follow the transient factor-occupancy law. Thus $50$--$50$ denotes an equal mixture. In the separate coverage audit, broad anchors and evaluation rescale stationary factor standard deviations by $1.8$. Tail-only evaluation uses a $2.2$ rescaling and excludes $\max_j|z_j/\varsigma_j|<1.5$, where $\varsigma_j$ is the stationary marginal standard deviation. The coverage audit uses standard-budget (core) and high-precision designs with respectively 128/256 anchors per time knot and 1,024/2,048 paths per anchor. Without broad anchors, standard-budget on-policy RMSE is about $0.244$. The pre-specified standard-budget broad-fraction sweep $\{0,1/6,1/3,1/2\}$ retains candidates within $5\%$ of the best on-policy RMSE, minimizes tail-only RMSE, and breaks ties by broad RMSE, selecting $1/2$. In the high-precision follow-up, moving from $1/6$ to $1/2$ lowers on-policy RMSE from $2.955\times10^{-3}$ to $2.758\times10^{-3}$ and broad/tail-only RMSE by about $41\%/44\%$. With the selected $50$--$50$ design, re-evaluation beats pooling in all three seeds on-policy at the standard and high-precision budgets, and on broad evaluation only at high precision; pooling wins tail-only at both budgets.

\FloatBarrier

\section*{Acknowledgements}
Jeonggyu Huh received financial support from the National Research Foundation of Korea\\
(No.~\mbox{RS-2025-00562904}).

\section*{Declarations}
\noindent\textbf{Code availability.} The reproducibility code and archived numerical summaries for this study are publicly available at the following GitHub repository:\par
\noindent\url{https://github.com/huhjeonggyu/self-consistent-adjoint-policy-iteration}

\noindent\textbf{Competing interests.} The authors declare no competing interests.

{\small
\sloppy
\setlength{\bibsep}{0.15em}
\bibliographystyle{spmpsci}
\bibliography{references}
}

\end{document}